\newcommand{\ignore}[1]{}
\newtheorem{corollary}{{\sc Corollary}}
\newtheorem{thm}{\sc Theorem}
\newtheorem{pro}{\sc Proposition}
\newtheorem{lem}{\sc Lemma}
\newdefinition{rmk}{Remark}
\newproof{pf}{Proof}
\newproof{pot}{Proof of Theorem \ref{thm2}}
\DeclareMathOperator{\PRV}{PRV}
\DeclareMathOperator{\PCV}{PCV}
\DeclareMathOperator{\variance}{Var}
    \renewcommand*{\subsection}{\@startsection{subsection}{2}{\z@}%
    {6pt}{6pt}{\reset@font\normalsize\bfseries}}
\begin{document}

\abovedisplayskip=6pt
\abovedisplayshortskip=6pt
\belowdisplayskip=6pt
\belowdisplayshortskip=6pt

\begin{frontmatter}
\title{Asymptotic properties of the realized skewness and related statistics}
%\ignore{
\author[Tokyo,TMU,ISM,CREST]{Yuta Koike}
\ead{kyuta@ms.u-tokyo.ac.jp}
\author[UM]{Zhi Liu}
\ead{liuzhi@umac.mo}
\address[Tokyo]{Graduate School of Mathematical Sciences, University of Tokyo, Japan}
\address[TMU]{Department of Business Administration, Graduate School of Social Sciences, Tokyo Metropolitan University, Japan}
\address[ISM]{The Institute of Statistical Mathematics, 10-3 Midori-cho, Tachikawa, Tokyo 190-8562, Japan}
\address[CREST]{CREST, Japan Science and Technology Agency}
\address[UM]{Department of Mathematics, University of Macau}
%}
%\address[UMZH]{UMacau Research Institute, Zhuhai, Guandong, China}
\begin{abstract}

The recent empirical works have pointed out that the realized skewness, which is the sample skewness of intraday high-frequency returns of a financial asset, serves as forecasting future returns in the cross-section. Theoretically, the realized skewness is interpreted as the sample skewness of returns of a discretely observed semimartingale in a fixed interval. The aim of this paper is to investigate the asymptotic property of the realized skewness in such a framework. We also develop an estimation theory for the limiting characteristic of the realized skewness in a situation where measurement errors are present and sampling times are stochastic.\\~\\
\textit{AMS 2000 subject classifications:} Primary 62M10; secondary 62G05
\end{abstract}
\begin{keyword}
High-frequency data; It\^o semimartingale; Jumps; Microstructure noise; Realized skewness; Stochastic sampling.
\end{keyword}
\end{frontmatter}
\newpage
%% main text
%\linenumbers
\section{Introduction}  \label{sec1}

In the past decades, with widely available high frequency financial data, statistical inference for stochastic processes has significantly been developed. Among others, inference for the quadratic variation of a semimartingale using high frequency data is particularly of interest in the literature, due to its important applications in finance, namely, measuring the fluctuation of security markets; see \citet{JP1998}, \citet{Jacod2008}, \citet{ABM2005}, \citet{BR2006} and references therein.

%Given discrete observations of an It$\hat{\text o}$ semimartingale $X$, say $\{X_{i/n}\}_{i=0}^n$, limit theorems for the realized quadratic variation $\sum_{k=1}^n(X_{t_k^n}-X_{t_{k-1}^n})^2$ have been derived by \citet{JP1998} firstly.
%The results of \citet{JP1998} have further been generalized by \citet{Jacod2008} which has investigated the asymptotic behavior of the realized variation $\sum_{k=1}^nf(X_{t_k^n}-X_{t_{k-1}^n})$ with a known even function $f(\cdot)$ under some regularity conditions.
%The results of \citet{JP1998} have further been generalized by \citet{Jacod2008} which has investigated the asymptotic behavior of the realized variation $\sum_{k=1}^nf(X_{t_k^n}-X_{t_{k-1}^n})$ of a function $f$ under various regularity conditions.
%The results have been used to infer on the integrated volatility and the integrated quarticity of $X$ in finance, which are the cases of $f(x)=x^2$ and $f(x)=x^4$, respectively. The version with truncation can be applied to remove the jump effect from the continuous part. Another feasible way of removing the jumps was proposed by \citet{BNS2004b}, which is called the multi-power realized variation, and related limit theorems of the multi-power realized variation have been developed by \citet{BGJPS2006,BNSW2006}, \citet{Kinnebrock2008}, \cite{PV2009b}, among others.

In practice, the quadratic variation of a semimartingale is important in finance because it can be considered as a realized measure of the variance of short period returns. Besides, higher moments rather than the variance, in particular the third moment and the fourth moment which appear in measuring the skewness and kurtosis of assets, have attracted vast attention in finance, see \citet{Bakshi2003}, \citet{FW1980}, \citet{MZ2009}, \citet{HS1999,harvey2000}, \citet{MV2007}, \citet{KNS2013}, among others. By using high frequency data, the efficiency of estimating the quadratic variation has substantially improved. Thus, a natural question is whether we can achieve some improvements by using high frequency data in the inferences for higher order realized moments.
%As an even function, the fourth moment connects to the integrated quarticity in literature (see \citet{Mykland2009}), and its estimation from high frequency data is a special case of \citet{Jacod2008} and \citet{BGJPS2006}. The asymptotics of odd power realized variations is very different.
%The fourth moment connects to the integrated quarticity in the literature (see \citet{Mykland2009}), and its estimation from high frequency data is a special case of \citet{Jacod2008} and \citet{BGJPS2006}. The asymptotics of the realized cubic power variation, which is not covered in these articles, is very different from the case of the fourth power.
%Theoretically, \citet{Kinnebrock2008} showed some limit theorems for realized bi-power variations of a continuous semimartingale for general function of increments, which include the cubic power variation as a special case (Example 6 in the paper).
In the empirical aspect, recently \citet{ACJV2015} have showed strong evidence that the sample skewness of intraday high-frequency returns, which is called the \textit{realized skewness} in the paper\footnote{\cite{neuberger2012} uses the term realized skewness for a different concept.}, serves as predicting future equity returns in the cross-section. 
More precisely, they have found that if a stock's realized skewness averaged over a week is relatively higher (resp.~lower) than other stocks' ones (e.g.~more than the 90\% quantile (resp.~less than the 10\% quantile) of all stocks' ones), then the stock's return in the next week tends to be negative (resp.~positive). They have also confirmed that this empirical finding is robust across various implementations.  
The asymptotic property of the realized skewness is briefly discussed in \cite{ACJV2015} as well. The aim of this paper is to investigate this point more deeply. Specifically, suppose that the dynamics of the log price process of an asset is modeled by an It\^o semimartingale $X=(X_t)_{t\geq0}$ and we have discrete observation data $\{X_{i\Delta_n}\}_{i=0}^{\lfloor T/\Delta_n\rfloor}$ on the interval $[0,T]$, where $\Delta_n$ is a positive number tending to 0 as $n\to\infty$. Then the realized skewness is given by
\[
RDSkew=\frac{\lfloor T/\Delta_n\rfloor\sum_{i=1}^{\lfloor T/\Delta_n\rfloor}(\Delta_i^n X)^3}{\left\{\sum_{i=1}^{\lfloor T/\Delta_n\rfloor}(\Delta_i^nX)^2\right\}^{3/2}}.
\]
\cite{ACJV2015} have pointed out that
\[
\sum_{i=1}^{\lfloor T/\Delta_n\rfloor}(\Delta^n_iX)^2\to^P[X,X]_T\quad
\text{and}\quad
\sum_{i=1}^{\lfloor T/\Delta_n\rfloor}(\Delta^n_iX)^3\to^P\sum_{0\leq s\leq T}(\Delta X_s)^3,
\]
where $\Delta^n_iX=X_{i\Delta_n}-X_{(i-1)\Delta_n}$, $\to^P$ denotes convergence in probability, $[X,X]$ denotes the quadratic variation process of $X$ and $\Delta X_s=X_s-X_{s-}$. Hence the appropriately scaled realized skewness, $RDSkew/\lfloor T/\Delta_n\rfloor$, is a consistent estimator for the following quantity:
\begin{equation}\label{skewness}
\frac{\sum_{0\leq s\leq T}(\Delta X_s)^3}{\left([X,X]_T\right)^{3/2}}.
\end{equation}
In this paper we aim at deriving the asymptotic distribution of the estimation error
\begin{equation}\label{error}
\frac{1}{\lfloor T/\Delta_n\rfloor}RDSkew-\frac{\sum_{0\leq s\leq T}(\Delta X_s)^3}{\left([X,X]_T\right)^{3/2}}.
\end{equation}
We shall remark that the asymptotic property of the statistic of the form
\begin{equation}\label{functional}
\sum_{i=1}^{\lfloor T/\Delta_n\rfloor}g(\Delta^n_iX),
\end{equation}
where $g$ is a function on $\mathbb{R}$ satisfying some smoothness condition, is well-studied in the literature. To our knowledge, the most general condition to derive the asymptotic distribution of the above statistic is given by Theorem 5.1.2 from \cite{JP2012}, which requires that $g$ is of class $C^2$ and satisfies $g(0)=0$, $g'(0)=0$ and $g''(x)=o(|x|)$ as $x\to0$. Unfortunately, this condition is not satisfied by the cubic function $g(x)=x^3$, so this theorem is not applicable to deriving the asymptotic distribution of \eqref{error}. \citet{Kinnebrock2008} proved the result for $g(x)=x^3$ when $X$ is continuous. One aim of this paper is to fill in this gap.

Another important issue in high-frequency financial econometrics is to take account of microstructure noise and randomness of observation times: 
At ultra high-frequencies asset prices are usually modeled as discrete observations of a semimartingale with observation noise, which is referred to as microstructure noise, because such data typically exhibit several empirical properties which are inconsistent with the semimartingale assumption. 
In addition, ``raw'' high-frequency financial data are usually recorded at certain event times such as transaction times or order arrival times, which would be random and depend on observed values. 
See Chapters 7 and 9 of \cite{AJ2014} and references therein for more details on this topic. 
This paper also deals with this issue. Namely, we construct a consistent estimator for quantity \eqref{skewness} and develop an associated asymptotic distribution theory when microstructure noise is present and the sampling scheme is stochastic. 
To accomplish this, we study the asymptotic property of the ``pre-averaged'' version of the statistic \eqref{functional} with the cubic function $g(x)=x^3$. Here, ``pre-averaging'' is a de-noising scheme which enables us to systematically adapt functionals of semimartingale increments (such as \eqref{functional}) to the case that microstructure noise is present. The method was originally introduced in \cite{PV2009a} and subsequently generalized in \cite{JLMPV2009}, and many theoretical results on it are now available in the literature. In particular, under mild regularity assumptions, Theorem 16.3.1 of \cite{JP2012} provides the asymptotic distribution of the pre-averaged version of the statistic \eqref{functional} (in the equidistant case) when $g$ is a linear combination of positively homogeneous $C^2$ functions with degree (strictly) bigger than 3. Here, a function $f:\mathbb{R}\to\mathbb{R}$ is said to be positively homogeneous with degree $w\geq0$ if $f(\alpha x)=\alpha^wf(x)$ for any $\alpha\geq0$ and any $x\in\mathbb{R}$. Hence, the condition on the function $g$ again rules out the cubic function. We thus need to perform an additional analysis to cover the cubic function. 
We will also show that the randomness of observation times has no essential impact on the asymptotic distribution of the pre-averaged version of the statistic \eqref{functional} with the cubic function $g(x)=x^3$. This kind of phenomenon has already been observed in \cite{Koike2015jclt,Koike2015} for the pre-averaged version of the realized volatility. It contrasts the non-noisy case because the randomness of observation times can cause non-trivial modification of the asymptotic distribution of the realized volatility as illustrated in \cite{Fukasawa2010}, \cite{LMRZZ2014}, \cite{BV2014} and \cite{VZ2016} for example. 

The remainder of this paper is arranged as follows. Section \ref{sec2} investigates the asymptotic property of statistic \eqref{functional} and derives the asymptotic distribution of the realized skewness. Section \ref{section:noise} develops an estimation theory in a situation with microstructure noise and stochastic sampling times. Section \ref{section:proofs} is devoted to the proofs.

\section{The asymptotic distribution of the realized skewness}\label{sec2}

On a filtered probability space $\mathcal{B}=(\Omega, {\cal F}, (\mathcal{F}_t)_{t\geq 0}, P)$, we consider a stochastic process $(X_t)_{t\geq 0}$ of the form
\begin{equation*}%\label{model}
\textstyle
X_t=X_0+\int_0^tb_sds+\int_0^t\sigma_sdB_s+\int_0^t\int_{|\delta(s,z)|\leq 1}\delta(s,z)(\mu-\nu)(ds,dz)
+\int_0^t\int_{|\delta(s,z)|>1}\delta(s,z)\mu(ds, dz),
\end{equation*}
where the drift process $b$ is $(\mathcal{F}_t)$-progressively measurable, the spot volatility process $\sigma$ is $(\mathcal{F}_t)$-adapted and c$\grave{\mbox a}$dl$\grave{\mbox a}$g, $B$ is a standard Brownian motion, $\mu$ is a Poisson random measure on $\mathbb{R}^+\times E$ with predictable compensator $\nu(dt,dz)=dt\lambda(dz)$ and $\lambda$ being a $\sigma$-finite measure on a Polish space $(E,\mathcal{E})$, and $\delta$ is a predictable function on $\Omega\times\mathbb{R}^+\times E$.

We impose the following standard structural assumption:
\begin{enumerate}[noitemsep,label={\normalfont[H]}]

\item \label{asu2} There are a sequence $(\tau_k)$ of stopping times increasing to infinity and a sequence $(\gamma_k)$ of deterministic nonnegative measurable functions on $E$ such that $\int\gamma_k(z)^2\lambda(dz)<\infty$ and $|\delta(\omega,t,z)|\wedge1\leq\gamma_k(z)$ for all $k$ and all $(\omega,t,z)$ with $t\leq\tau_k(\omega)$.

\end{enumerate}

%\section{The asymptotic distribution of the realized skewness}\label{sec3}

Let us assume that we observe the process $X$ at equidistant discrete points $\{i\Delta_n\}_{i=0}^{\lfloor T/\Delta_n\rfloor}$ for some $T>0$, where $\Delta_n$ is a sequence of positive numbers tending to zero as $n\to\infty$. We develop a central limit theorem for the non-normalized increments of $X$
\begin{equation*}
\mathcal{V}^n_T(X,g):=\sum_{i=1}^{\lfloor t/\Delta_n\rfloor}g(\Delta^n_iX)
\end{equation*}
for a function $g:\mathbb{R}\to\mathbb{R}$ satisfying some smoothness condition. If $g$ is continuous and satisfies $g(x)=o(x^2)$ as $x\to0$, it is known that
\[
\mathcal{V}^n_T(X,g)\to^P\sum_{0\leq s\leq T}g(\Delta X_s)
\]
as $n\to\infty$; see e.g.~Theorem 3.3.1 from \cite{JP2012}. If further $g$ is of class $C^2$ and satisfies $g(0)=g'(0)=0$ and $g''(x)=o(|x|)$ as $x\to0$, a central limit theorem for $\mathcal{V}^n_T(X,g)$ is also known under Assumption \ref{asu2} (see Theorem 5.1.2 of \cite{JP2012}). This condition is, however, not sufficient to allow the cubic function $g(x) = x^3$, which is crucial for deriving the asymptotic distribution of the realized skewness. Motivated by this reason, in the following we relax this condition to incorporate such a function.

We will use the notion of \textit{stable convergence} denoted by $\rightarrow^{\mathcal S}$. Here we briefly describe it before the main theorems. Let $(\mathcal{X},\mathcal{A},\mathbb{P})$ be a probability space and assume that we have a random element $Z_n$ taking values in a Polish space $S$ and defined on an extension $(\mathcal{X}_n,\mathcal{A}_n,\mathbb{P}_n)$ of $(\mathcal{X},\mathcal{A},\mathbb{P})$ for each $n\in\mathbb{N}\cup\{\infty\}$. In this setup the sequence $Z_n$ is said to \textit{converge stably in law} to $Z_\infty$ if $\mathbb{E}_n[Uf(Z_n)]\rightarrow \mathbb{E}_\infty[Uf(Z_\infty)]$ for any $\mathcal{A}$-measurable bounded random variable $U$ and any bounded continuous function $f$ on $S$. The most important property of this notion is the following: For each $n\in\mathbb{N}$, let $V_n$ be a real-valued variable on $(\mathcal{X}_n,\mathcal{A}_n,\mathbb{P}_n)$, and suppose that the sequence $V_n$ converges in probability to a variable $V$ on $(\mathcal{X},\mathcal{A},\mathbb{P})$. Then we have $(Z_n,V_n)\to^{d_s}(Z_\infty,V)$ for the product topology on the space $S\times\mathbb{R}$, provided that $Z_n\to^{\mathcal{S}}Z$. We refer to Section 2.2.1 of \citet{JP2012} for more detailed discussions.

%We develop a central limit theorem for the variables $\mathcal{V}^n_T(X,g)$ with a function $g$ on $\mathbb{R}$ satisfying some smoothness condition in the presence of jumps. If $g$ is continuous and satisfies $g(x)=o(\|x\|^2)$ as $x\to0$, it is known that $\mathcal{V}^n_T(X,g)\to^P\sum_{0\leq s\leq T}g(\Delta X_s)$
%as $n\to\infty$, where $\Delta X_s:=X_s-X_{s-}$; see Theorem 3.3.1 of \cite{JP2012}. If further $g$ is a $C^2$ function and satisfies $g(0)=\partial_ig(0)=0$ and $\partial^2_{ij}g(x):=\partial_{i}\partial_jg(x)=o(\|x\|)$ for all $i,j=1,\dots,d$ as $x\to0$, a central limit theorem for $\mathcal{V}^n_T(X,g)$ is also known under Assumption \ref{asu2} (see Theorem 5.1.2 of \cite{JP2012}). This condition is, however, not sufficient to allow cubic-type functions such that $g(x)=x^ix^jx^k$ for some $i,j,k=1,\dots,d$, where $x^i$ denotes the $i$-th component of $x$. Such functions are crucial for the inference of skewness-coskewness matrices. Motivated by this reason, we relax this condition to incorporate them.

We need some ingredients to describe the limiting random variables appearing in the central limit theorems below. Consider an auxiliary space $(\Omega',\mathcal{F}',P')$ supporting a  standard normal variable $U^0$, two sequences $(U_q)_{q\geq1}$, $(U'_q)_{q\geq1}$ of standard normal variables, and a sequence $(\kappa_q)_{q\geq1}$ of variables uniformly distributed on $(0,1)$, all of these being mutually independent. Then we introduce the extension $(\widetilde{\Omega},\widetilde{\mathcal{F}},\widetilde{P})$ of $(\Omega,\mathcal{F},P)$ by putting
$\widetilde{\Omega}=\Omega\times\Omega',$
$\widetilde{\mathcal{F}}=\mathcal{F}\otimes\mathcal{F}'$ and
$\widetilde{P}=P\times P'$.
Now let $(T_q)_{q\geq1}$ be a sequence of stopping times exhausting the jumps of $X$. Namely, $\{s\geq0:\Delta X_s(\omega)\neq0\}=\{T_q(\omega):q\geq1,T_q(\omega)<\infty\}$ for almost all $\omega$ and $T_q\neq T_{q'}$ if $q\neq q'$ and $T_q<\infty$. It is well-known that such a sequence always exists as long as $X$ is c\`adl\`ag and adapted; see Proposition I-1.32 of \cite{JS2003}. For any $C^1$ function $g:\mathbb{R}\to\mathbb{R}$ such that $g'(x)=o(|x|)$ as $x\to0$, we define the random variable $\overline{\mathcal{V}}_T(X,g)$ by
\begin{align*}
\overline{\mathcal{V}}_T(X,g)=\sum_{q:T_q\leq T}g'(\Delta X_{T_q})R_q,
\end{align*}
where $R_q=\sqrt{\kappa_q}\sigma_{T_q-}U_q+\sqrt{1-\kappa_q}\sigma_{T_q}U'_q.$ From Proposition 5.1.1 of \cite{JP2012} the variable $\overline{\mathcal{V}}_T(X,g)$ is well-defined and its $\mathcal{F}$-conditional law does not depend on the choice of the exhausting sequence $(T_q)$. For any integer $r\geq2$ we also define the random variable $\mathcal{Z}_T(X,r)$ by $\mathcal{Z}_T(X,r)=\overline{\mathcal{V}}_T(X,g_r),$ where the function $g_r$ is defined by $g_r(x)=x^r$.

In order to derive the asymptotic distribution of the realized skewness we also need to consider the \textit{realized volatility} of $X$, i.e.~$\text{RV}^n_T(X)=\sum_{i=1}^{\lfloor T/\Delta_n\rfloor}(\Delta^n_iX)^2$. Under Assumption \ref{asu2}, the following central limit theorem for $\text{RV}^n_T(X)$ is known (e.g.~Theorem 5.4.2 of \cite{JP2012}):
\[
\frac{1}{\sqrt{\Delta_n}}(\text{RV}^n_T(X)-[X,X]_T)\to^\mathcal{S}\sqrt{2IQ_T}U^0+\mathcal{Z}_T(X,2)
\]
as $n\to\infty$, where $IQ_T:=\int_0^T\sigma_s^4ds$ is the so-called \textit{integrated quarticity}. Here our aim is to develop a joint central limit theorem for the bivariate variables $(\text{RV}_T^n(X), \mathcal{V}_T^n(X, g))$. 
In the following the variables $\mathcal{Z}_T(X, 2)$ and $\overline{\mathcal{V}}_T(X, g)$ are defined with respect to the same auxiliary sequence $R_q$. 
\begin{thm}\label{thm2}
Let $g$ be a real-valued $C^2$ function on $\mathbb{R}$ satisfying $g(0)=g'(0)=0$ and $g''(x)=O(|x|)$ as $x\to0$. Under Assumption \ref{asu2}, the random variables
\[
\frac{1}{\sqrt{\Delta_n}}\left(\mathrm{RV}^n_T(X)-[X,X]_T,\mathcal{V}^n_T(X,g)-\mathcal{V}^n_T(X^c,g)-\sum_{0\leq s\leq T}g(\Delta X_s)\right)
\]
converge stably in law to
\[
(\sqrt{2IQ_T}U^0+\mathcal{Z}_T(X,2),\overline{\mathcal{V}}_T(X,g))
\]
as $n\to\infty$, where $X^c$ denotes the continuous martingale part of $X$, i.e.~$X^c_t=\int_0^t\sigma_sdB_s$.
\end{thm}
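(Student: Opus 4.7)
The approach is to follow the structure of the proof of Theorem 5.1.2 in \cite{JP2012}, adapted to handle the relaxed condition $g''(x) = O(|x|)$ (rather than $o(|x|)$). Together with $g(0) = g'(0) = 0$, this condition yields the local bounds $|g'(x)| \leq C|x|^2$ and $|g(x)| \leq C|x|^3$, but the absence of the little-$o$ in $g''$ is precisely what forces the subtraction of $\mathcal{V}^n_T(X^c, g)$: the term $g(\Delta_i^n X^c)$ is genuinely of order $\Delta_n^{3/2}$, giving $\mathcal{V}^n_T(X^c,g)$ a non-trivial contribution at rate $\sqrt{\Delta_n}$ that is not easily expressed in closed form. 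I would first apply the standard localization argument to strengthen Assumption \ref{asu2} to bounded $b$, $\sigma$ and bounded jumps, and then, for each $\varepsilon > 0$, decompose the jump martingale part of $X$ into a big-jump piece $X^{(\varepsilon)}$ with finitely many jumps at stopping times $T_1, \dots, T_{N_\varepsilon}$ and a small-jump remainder $X(\varepsilon)$. Partition the indices $i$ into the set $\mathcal{I}^n_\varepsilon$ of those whose interval $I^n_i = ((i-1)\Delta_n, i\Delta_n]$ contains exactly one $T_q$ and its complement.

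On jump intervals $i \in \mathcal{I}^n_\varepsilon$, write $\Delta_i^n X = \Delta X_{T_q} + \alpha^n_i$, where $\alpha^n_i$ collects the increments of all components of $X$ on $I^n_i$ other than the jump at $T_q$. A second-order Taylor expansion of $g$ around $\Delta X_{T_q}$ gives
\[
g(\Delta_i^n X) - g(\Delta_i^n X^c) = g(\Delta X_{T_q}) + g'(\Delta X_{T_q})\alpha^n_i + O(|\alpha^n_i|^2) - g(\Delta_i^n X^c).
\]
Since $|\alpha^n_i|^2 = O_p(\Delta_n + \varepsilon^2)$, $|g(\Delta_i^n X^c)| = O_p(\Delta_n^{3/2})$, and $|\mathcal{I}^n_\varepsilon| = N_\varepsilon = O_p(1)$, after dividing by $\sqrt{\Delta_n}$ these error terms vanish upon sending $n \to \infty$ and then $\varepsilon \to 0$. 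The jump values $g(\Delta X_{T_q})$ (summed over big jumps) cancel the centering $\sum_{0\leq s \leq T} g(\Delta X_s)$ up to a small-jump residual of order $\varepsilon$, and the stochastic term $\frac{1}{\sqrt{\Delta_n}} g'(\Delta X_{T_q}) \Delta_{i(q)}^n X^c$ (the dominant piece of $\alpha^n_i$) converges jointly stably to $g'(\Delta X_{T_q}) R_q$ by the classical limit theorem for normalized increments straddling a jump (Proposition 5.1.1 of \cite{JP2012}), producing $\overline{\mathcal{V}}_T(X,g)$ after summation over $q$.

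On non-jump intervals, a Taylor expansion of $g$ at $\Delta_i^n X^c$ gives
\[
g(\Delta_i^n X) - g(\Delta_i^n X^c) = g'(\Delta_i^n X^c)\bigl(\Delta_i^n X - \Delta_i^n X^c\bigr) + \tfrac{1}{2} g''(\xi_i)\bigl(\Delta_i^n X - \Delta_i^n X^c\bigr)^2.
\]
Using $|g'(x)| \leq C|x|^2$ and $|g''(x)| \leq C|x|$ locally, together with the BDG bound $E|\Delta_i^n X^c|^p \lesssim \Delta_n^{p/2}$ and the small-jump estimate $E|\Delta_i^n X(\varepsilon)|^2 \lesssim \eta(\varepsilon)\Delta_n$ with $\eta(\varepsilon) := \int_{|\delta|\leq \varepsilon} \delta^2 \,\lambda(dz) \to 0$, the $L^1$ norm of the resulting sum is bounded by $C\sqrt{\Delta_n}\,\sqrt{\eta(\varepsilon)}$, hence is $o_p(\sqrt{\Delta_n})$ after $n \to \infty$ and $\varepsilon \to 0$.

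For the joint stable convergence with $(\mathrm{RV}^n_T - [X,X]_T)/\sqrt{\Delta_n}$, the same decomposition applies. Theorem 5.4.2 of \cite{JP2012} delivers the continuous-part limit $\sqrt{2IQ_T}U^0$ (independent of $\mathcal{F}$), while the jump-interval contribution to $\mathrm{RV}$ is $\mathcal{Z}_T(X,2) = \sum_q 2\Delta X_{T_q} R_q$, built from the \emph{same} triples $(U_q, U'_q, \kappa_q)$ as $\overline{\mathcal{V}}_T(X,g)$; the joint stable limit thus emerges from a single stable CLT for the vector $(\Delta_{i(q)}^n X^c/\sqrt{\Delta_n})_q$. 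The principal technical obstacle is the non-jump-interval estimate: because $g'(\Delta_i^n X^c)$ is only $O(\Delta_n)$ (rather than $o(\Delta_n)$, as would follow from $g''(x)=o(|x|)$), the naive size of $\sum g'(\Delta_i^n X^c) \Delta_i^n X(\varepsilon)$ matches the target rate $\sqrt{\Delta_n}$, and one must leverage the martingale orthogonality between $X^c$ and $X(\varepsilon)$ together with the smallness of $\eta(\varepsilon)$ to beat this tight bound.
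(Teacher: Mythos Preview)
Your overall architecture --- localization, big/small jump decomposition, Taylor expansion around $\Delta X_{T_q}$ on jump intervals, and a joint stable CLT for the vector of normalized straddling increments --- matches the paper's Steps~1--2. However, your treatment of the non-jump intervals has two genuine gaps that the paper resolves with It\^o-calculus arguments you have not invoked.

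First, your ``small-jump residual of order $\varepsilon$'' does not disappear: the leftover $\sum_{s:|\Delta X_s|\le\varepsilon} g(\Delta X_s)=O(\varepsilon)$ is divided by $\sqrt{\Delta_n}$, so for fixed $\varepsilon$ it blows up as $n\to\infty$. It therefore cannot be handled on the big-jump side and must instead be paired with $\sum_i g(\Delta_i^n X''(\varepsilon))$ on the non-jump intervals. The paper does exactly this (its Step~5): applying It\^o's formula to $g$ along the small-jump process $X''(m)$ yields the identity
\[
g(\Delta_i^n X''(m))-\sum_{s\in I_i^n} g(\Delta X(m)_s)=A(n,m,i)+M(n,m,i),
\]
a drift plus a compensated-jump martingale, each of which is shown to be $o_p(\sqrt{\Delta_n})$ after summation. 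Your Taylor expansion at $\Delta_i^n X^c$ never produces this cancellation.

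Second, even if you fix the centering, your claimed bound $C\sqrt{\Delta_n}\sqrt{\eta(\varepsilon)}$ for the non-jump sum is not delivered by the moment estimates you list. The second-order remainder $g''(\xi_i)(\Delta_i^n X-\Delta_i^n X^c)^2$ contains the cross piece $|\Delta_i^n X^c|\,(\Delta_i^n X''(\varepsilon))^2$; Cauchy--Schwarz with BDG gives at best $E\bigl[|\Delta_i^n X^c|(\Delta_i^n X''(\varepsilon))^2\bigr]\lesssim\Delta_n\,\varepsilon\sqrt{\eta(\varepsilon)}$, which after summing $\lfloor T/\Delta_n\rfloor$ terms and dividing by $\sqrt{\Delta_n}$ yields $\varepsilon\sqrt{\eta(\varepsilon)}/\sqrt{\Delta_n}\to\infty$. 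The paper's Step~3 obtains the sharper bound $E\bigl[|\Delta_i^n X^c(\Delta_i^n X''(m))^2|\bigr]\lesssim\Delta_n^{3/2}\overline{\gamma}_m$ (plus lower-order terms) by repeated integration by parts, exploiting $[X^c,X''(m)]\equiv0$; this is the step that genuinely requires the relaxed hypothesis $g''(x)=O(|x|)$ to be handled with new tools. Incidentally, you flag the \emph{first}-order term $\sum_i g'(\Delta_i^n X^c)\Delta_i^n X(\varepsilon)$ as the obstacle, but that one already succumbs to Cauchy--Schwarz and your bound $\sqrt{\Delta_n}\sqrt{\eta(\varepsilon)}$; the real difficulty is the second-order cross term.
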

We prove this result in Section \ref{proof:thm2}.

\begin{rmk}

(i) If in addition $g''(x)=o(|x|)$ as $x\to0$, it can easily be seen that $\mathcal{V}^n_T(X^c,g)=o_P(\sqrt{\Delta_n})$ as $n\to\infty$, hence the theorem is a special case of Theorem 5.5.1 from \cite{JP2012} once we note that $[X,X]_{\lfloor T/\Delta_n\rfloor\Delta_n}-[X,X]_T=o_P(\sqrt{\Delta_n})$ under the assumptions of the theorem.

\noindent(ii) If the probability limit $\mathcal{V}_T(X^c,g):=P\text{-}\lim\mathcal{V}^n_T(X^c,g)/\sqrt{\Delta_n}$ exists, by using the properties of stable convergence we can deduce a central limit theorem for $\frac{1}{\sqrt{\Delta_n}}(\mathcal{V}^n_T(X,g)-\sum_{0\leq s\leq T}g(\Delta X_s))$ (in this case $\mathcal{V}_T(X^c,g)$ appears as the $\mathcal{F}$-conditional mean of the limiting variable).

\noindent(iii) If $g$ is positively homogeneous, i.e.~there exists a constant $w$ such that $g(\alpha x)=\alpha^wg(x)$ for any $\alpha\geq0$ and any $x\in\mathbb{R}$, the probability limit of $\mathcal{V}^n_T(X^c,g)/\sqrt{\Delta_n}$ can be derived from e.g.~Theorem 3.4.1 of \cite{JP2012}. In the following we give two examples of such a case as corollaries.

\noindent(iv) In general, the variables $\mathcal{V}^n_T(X^c,g)/\sqrt{\Delta_n}$ may not converge in probability (even in law, indeed); see the next proposition (we prove it in Section \ref{proof:counter-ex}).  

\end{rmk}

\begin{pro}\label{prop:counter-ex}
For every $a\in\mathbb{R}$, define the function $g_a:\mathbb{R}\to\mathbb{R}$ by $g_a(x)=|x|^3\sin(2a\log |x|)$.
\begin{enumerate}[label={\normalfont(\alph*)}]

\item For all $a\in\mathbb{R}$, $g_a$ is a $C^2$ function and satisfies $g_a(0)=g_a'(0)=0$ and $g_a''(x)=O(|x|)$ as $x\to0$. 

\item Suppose that $\sigma_s=1$ for all $s\in[0,T]$. There is a real number $a\neq0$ such that the variables $\mathcal{V}^n_T(X^c,g_a)/\sqrt{\Delta_n}$ do not converge in law with $\Delta_n=\exp(-n\pi/a)$. 

\end{enumerate}
\end{pro}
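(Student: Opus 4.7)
The plan is to prove (a) by direct calculus and (b) by a Brownian-scaling identity that forces the renormalized sum to oscillate between two deterministic values along the subsequences of even and odd~$n$.

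For (a) I note first that $g_a$ is even, since both $|x|^3$ and $\log|x|$ are, so it suffices to work on $x>0$ where $g_a(x)=x^3\sin(2a\log x)$. Two applications of the product and chain rules give
\[
g_a'(x)=x^2\bigl[3\sin(2a\log x)+2a\cos(2a\log x)\bigr],\qquad
g_a''(x)=x\bigl[(6-4a^2)\sin(2a\log x)+10a\cos(2a\log x)\bigr].
\]
Bounding the trigonometric brackets yields $|g_a'(x)|\le(3+2|a|)x^2$ and $|g_a''(x)|\le(|6-4a^2|+10|a|)|x|$, from which $g_a(0)=g_a'(0)=0$, $g_a''(x)=O(|x|)$, and the $C^2$-property at the origin follow at once.

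For (b) the key is Brownian scaling: with $\sigma\equiv 1$ we have $X^c=B$ and $\Delta^n_iB=\sqrt{\Delta_n}\,Z_i$ for i.i.d.\ $Z_i\sim \mathcal{N}(0,1)$. Expanding $\log(\sqrt{\Delta_n}|Z_i|)=\tfrac12\log\Delta_n+\log|Z_i|$ gives
\[
g_a(\Delta^n_iB)=\Delta_n^{3/2}|Z_i|^3\sin\bigl(a\log\Delta_n+2a\log|Z_i|\bigr).
\]
For the special choice $\Delta_n=\exp(-n\pi/a)$ one has $a\log\Delta_n=-n\pi$, so the sine factor collapses to $(-1)^n\sin(2a\log|Z_i|)$ and hence $g_a(\Delta^n_iB)=(-1)^n\Delta_n^{3/2}g_a(Z_i)$. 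Summing over $i$, dividing by $\sqrt{\Delta_n}$, and applying the law of large numbers to the integrable random variables $g_a(Z_i)$ (note $|g_a(Z)|\le|Z|^3$), I obtain
\[
\frac{1}{\sqrt{\Delta_n}}\mathcal{V}^n_T(X^c,g_a)=(-1)^n\,\Delta_n\sum_{i=1}^{\lfloor T/\Delta_n\rfloor}g_a(Z_i)=(-1)^nT\mu_a+o_P(1),
\]
where $\mu_a:=E[g_a(Z_1)]$. Provided $\mu_a\neq 0$, the laws along even~$n$ and odd~$n$ converge to the distinct Dirac measures $\delta_{T\mu_a}$ and $\delta_{-T\mu_a}$, so the sequence has no limit in law.

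The main (and essentially only) obstacle is to exhibit $a\neq 0$ with $\mu_a\neq 0$; positivity will be needed afterwards only to ensure $\Delta_n\to 0$. I plan to use an analyticity argument: by dominated convergence with majorant $|Z|^3$, $a\mapsto\mu_a$ is smooth and $\mu_0=0$, while
\[
\mu_a'\big|_{a=0}=2E[|Z|^3\log|Z|].
\]
Differentiating the standard formula $E[|Z|^s]=2^{s/2}\Gamma((s+1)/2)/\sqrt{\pi}$ in $s$ and evaluating at $s=3$ gives $E[|Z|^3\log|Z|]=\sqrt{2/\pi}\,(1+\log 2-\gamma)\neq 0$. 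Thus $\mu_a\neq 0$ for all sufficiently small $a\neq 0$, and any such small $a>0$ delivers the counterexample.
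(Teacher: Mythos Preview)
Your proof is correct and, for part (b), genuinely more direct than the paper's. The paper argues by contradiction: it bounds the second moment of $\mathcal{V}^n_T(X^c,g_a)/\sqrt{\Delta_n}$ to obtain uniform integrability, so that any hypothetical limit in law would force convergence of the \emph{expectations}; it then shows these expectations equal $(-1)^n\cdot 2T\int_0^\infty x^3\sin(2a\log x)\mathfrak{N}(x)\,dx$ up to $O(\Delta_n)$, and invokes a Fourier-transform argument (the function $y\mapsto e^{4y}\mathfrak{N}(e^y)$ is not even) to find an $a$ with the integral nonzero. You instead observe that the variance of $\Delta_n\sum_i g_a(Z_i)$ tends to zero, so the whole variable---not just its mean---converges in probability along even and odd $n$ to the distinct constants $\pm T\mu_a$; this dispenses with uniform integrability and the contradiction step. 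For the existence of $a$ with $\mu_a\neq 0$, your derivative computation $\mu_a'(0)=2E[|Z|^3\log|Z|]=2\sqrt{2/\pi}(1+\log 2-\gamma)\neq 0$ is more elementary than the paper's Fourier argument, at the price of a small special-function calculation. One cosmetic point: the $Z_i$ depend on $n$ (they form a triangular array), so ``law of large numbers'' should be read as the row-wise i.i.d.\ weak law via Chebyshev, which is exactly what your variance remark justifies.
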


If we assume $g(x)=|x|^3$, we obtain a generalization of case 2 from Section 1.4.2 of \cite{JP2012} (p.20 of that book) where $X$ is assumed to be a scaled Brownian motion with a linear drift plus a compound Poisson process to a situation where $X$ is a more general It\^o semimartingale:
\begin{corollary}
Under Assumption \ref{asu2},
\begin{align*}
\frac{1}{\sqrt{\Delta_n}}\left(\sum_{i=1}^{\lfloor T/\Delta_n\rfloor}|\Delta^n_iX|^3-\sum_{0\leq s\leq T}|\Delta X_s|^3\right)
\to^\mathcal{S}
\frac{2\sqrt{2}}{\sqrt{\pi}}\int_0^T|\sigma_s|^3ds+3\sum_{q:T_q\leq T}\mathrm{sign}(\Delta X_{T_q})(\Delta X_{T_q})^2R_q
\end{align*}
as $n\to\infty$, where $\mathrm{sign}(x)=1$ if $x\geq0$; otherwise $\mathrm{sign}(x)=-1$.
\end{corollary}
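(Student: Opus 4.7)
\medskip

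\noindent\textbf{Proof plan for the Corollary.} The strategy is to invoke Theorem \ref{thm2} with $g(x)=|x|^3$, compute the explicit form of $\overline{\mathcal{V}}_T(X,g)$, then separately identify the in-probability limit of $\mathcal{V}^n_T(X^c,g)/\sqrt{\Delta_n}$ and add the two contributions via the properties of stable convergence (as sketched in Remark (ii)--(iii)).

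First I would verify the hypothesis of Theorem \ref{thm2}. A direct calculation gives $g'(x)=3\operatorname{sign}(x)x^2$ and $g''(x)=6|x|$, which is continuous, so $g\in C^2(\mathbb{R})$ with $g(0)=g'(0)=0$ and $g''(x)=O(|x|)$ as $x\to0$. Substituting $g'(\Delta X_{T_q})=3\operatorname{sign}(\Delta X_{T_q})(\Delta X_{T_q})^2$ into the definition of $\overline{\mathcal{V}}_T(X,g)$ yields
\[
\overline{\mathcal{V}}_T(X,g)=3\sum_{q:T_q\leq T}\operatorname{sign}(\Delta X_{T_q})(\Delta X_{T_q})^2 R_q,
\]
which is the jump term on the right-hand side. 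Theorem \ref{thm2} then gives
\[
\frac{1}{\sqrt{\Delta_n}}\!\left(\mathcal{V}^n_T(X,g)-\mathcal{V}^n_T(X^c,g)-\sum_{0\leq s\leq T}|\Delta X_s|^3\right)\to^{\mathcal{S}}\overline{\mathcal{V}}_T(X,g).
\]

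Next I would identify the in-probability limit of $\mathcal{V}^n_T(X^c,g)/\sqrt{\Delta_n}$. Because $g(x)=|x|^3$ is positively homogeneous of degree $3$ and $X^c_t=\int_0^t\sigma_s\,dB_s$ is a continuous It\^o semimartingale, Theorem 3.4.1 of \cite{JP2012} applies to the normalized functional $\Delta_n^{1/2}\sum_i g\bigl(\Delta_n^{-1/2}\Delta^n_iX^c\bigr)=\Delta_n^{-1/2}\mathcal{V}^n_T(X^c,g)$, giving the locally uniform convergence in probability
\[
\frac{1}{\sqrt{\Delta_n}}\mathcal{V}^n_T(X^c,g)\to^P m_3\int_0^T|\sigma_s|^3\,ds,\qquad m_3:=E[|U^0|^3]=\frac{2\sqrt{2}}{\sqrt{\pi}}.
\]
The value of $m_3$ is obtained from $\int_0^\infty x^3 e^{-x^2/2}dx=2$ times $2/\sqrt{2\pi}$.

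Finally I would combine the two displays. By the stability property recalled in the paper (a stably convergent sequence plus a sequence converging in probability converges stably to the sum of the limits), adding the two statements and rearranging yields
\[
\frac{1}{\sqrt{\Delta_n}}\!\left(\sum_{i=1}^{\lfloor T/\Delta_n\rfloor}|\Delta^n_iX|^3-\sum_{0\leq s\leq T}|\Delta X_s|^3\right)\to^{\mathcal{S}}\frac{2\sqrt{2}}{\sqrt{\pi}}\int_0^T|\sigma_s|^3\,ds+3\sum_{q:T_q\leq T}\operatorname{sign}(\Delta X_{T_q})(\Delta X_{T_q})^2 R_q,
\]
which is the claim. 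The only non-trivial ingredients here are Theorem \ref{thm2} (supplied) and the JP2012 result for homogeneous functionals of a continuous semimartingale; the rest of the argument is bookkeeping, so no genuine obstacle arises beyond correctly matching constants and verifying that the auxiliary variables $R_q$ appearing in $\overline{\mathcal{V}}_T(X,g)$ are the same ones used in the statement of the corollary, which is exactly the convention fixed in the paragraph preceding Theorem \ref{thm2}.
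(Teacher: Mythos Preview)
Your proposal is correct and follows exactly the route the paper indicates: the corollary is presented immediately after Remark~(ii)--(iii) as an application of Theorem~\ref{thm2} to the positively homogeneous function $g(x)=|x|^3$, with the in-probability limit of $\mathcal{V}^n_T(X^c,g)/\sqrt{\Delta_n}$ supplied by Theorem~3.4.1 of \cite{JP2012}. The paper gives no separate proof beyond this remark, so your verification of the hypotheses, computation of $g'$, and identification of $m_3=2\sqrt{2}/\sqrt{\pi}$ fill in precisely the details the paper leaves implicit.
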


If we consider $g(x)=x^3$, the following joint central limit theorem for the realized volatility and the cubic power variation is obtained:
\begin{corollary}\label{cor.skew}
Under Assumption \ref{asu2}, the variables
\begin{equation*}
\frac{1}{\sqrt{\Delta_n}}
\left(\mathrm{RV}^n_T(X)-[X,X]_T,\sum_{i=1}^{\lfloor T/\Delta_n\rfloor}(\Delta^n_iX)^3-\sum_{0\leq s\leq T}(\Delta X_s)^3\right)
\end{equation*}
converge stably in law to
\begin{equation*}
\left(\sqrt{2IQ_T}U^0+\mathcal{Z}_T(X,2),\mathcal{Z}_T(X,3)\right)
\end{equation*}
as $n\to\infty$.
\end{corollary}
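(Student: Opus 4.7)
The plan is to deduce the corollary from Theorem \ref{thm2} together with the fact that for $g(x)=x^3$ the continuous‐part term $\mathcal{V}^n_T(X^c,g)/\sqrt{\Delta_n}$ vanishes in probability, which is remark (iii) applied in a case where the limit happens to be zero.

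First I would verify that $g(x)=x^3$ fits the hypotheses of Theorem \ref{thm2}: it is $C^2$, satisfies $g(0)=g'(0)=0$ and $g''(x)=6x=O(|x|)$ as $x\to 0$. Theorem \ref{thm2} then delivers the stable convergence of
\[
\frac{1}{\sqrt{\Delta_n}}\Bigl(\mathrm{RV}^n_T(X)-[X,X]_T,\ \mathcal{V}^n_T(X,g)-\mathcal{V}^n_T(X^c,g)-\sum_{0\le s\le T}(\Delta X_s)^3\Bigr)
\]
to $(\sqrt{2IQ_T}U^0+\mathcal{Z}_T(X,2),\overline{\mathcal V}_T(X,g))$. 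By the very definition of $\mathcal{Z}_T(X,3)$ in the paper, $\overline{\mathcal V}_T(X,g)=\mathcal{Z}_T(X,3)$, so the only thing left to identify is the behaviour of $\mathcal{V}^n_T(X^c,g)/\sqrt{\Delta_n}$.

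Next I would show that $\mathcal{V}^n_T(X^c,g)/\sqrt{\Delta_n}\to^P 0$. Since $g$ is positively homogeneous of degree $3$, this is exactly the content of remark (iii): Theorem 3.4.1 of \cite{JP2012} gives
\[
\frac{1}{\sqrt{\Delta_n}}\mathcal{V}^n_T(X^c,g)\to^P \int_0^T |\sigma_s|^3\,\mathbb{E}[g(Z)]\,ds,
\]
where $Z$ is standard normal. Because $g$ is an odd function, $\mathbb{E}[Z^3]=0$, so the right‑hand side is identically $0$. (If one wants to avoid appealing to that theorem, the same conclusion follows from a direct moment estimate after a standard localization reducing to bounded $\sigma$: write $\Delta_i^n X^c=\sigma_{(i-1)\Delta_n}\Delta_i^n B+$ residual; the main piece contributes a martingale in $i$ whose quadratic variation is $O(\Delta_n^2)$, and the residual is controlled by the càdlàg modulus of $\sigma$.)

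Finally, since stable convergence is preserved under the addition of a sequence converging to zero in probability, adding $\mathcal{V}^n_T(X^c,g)/\sqrt{\Delta_n}=o_P(1)$ to the second coordinate yields the stated joint stable convergence of
\[
\frac{1}{\sqrt{\Delta_n}}\Bigl(\mathrm{RV}^n_T(X)-[X,X]_T,\ \sum_{i=1}^{\lfloor T/\Delta_n\rfloor}(\Delta^n_iX)^3-\sum_{0\le s\le T}(\Delta X_s)^3\Bigr)
\]
to $(\sqrt{2IQ_T}U^0+\mathcal{Z}_T(X,2),\mathcal{Z}_T(X,3))$. There is no real obstacle here beyond Theorem \ref{thm2} itself; the only point that requires any thought is the vanishing of the continuous‑part bias, and this is immediate from the oddness of the cube. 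All the genuine work (handling jumps, the fact that $g''$ is only $O(|x|)$ rather than $o(|x|)$, the joint CLT with the realized volatility) has already been absorbed into Theorem \ref{thm2}.
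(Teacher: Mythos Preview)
Your proposal is correct and matches the paper's own (implicit) argument exactly: the paper presents this corollary as an immediate consequence of Theorem~\ref{thm2} via remarks (ii)--(iii), using the positive homogeneity of $g(x)=x^3$ and Theorem~3.4.1 of \cite{JP2012} to identify the probability limit of $\mathcal{V}^n_T(X^c,g)/\sqrt{\Delta_n}$, which vanishes by the oddness of the cube. There is nothing to add.
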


We can use Corollary \ref{cor.skew} to derive the asymptotic distribution of the realized skewness: Combining Corollary \ref{cor.skew} with the delta method for stable convergence (Proposition 2(ii) of \cite{Podolskij2010}), we obtain the following result:
\begin{thm}\label{thm:skew}
Under Assumption \ref{asu2}, the variables
\[
\frac{1}{\sqrt{\Delta_n}}\left(\frac{1}{\lfloor T/\Delta_n\rfloor}RDSkew-\frac{\sum_{0\leq s\leq T}(\Delta X_s)^3}{\left([X,X]_T\right)^{3/2}}\right)
\]
converge stably in law to
\[
G_T:=\frac{[X,X]_T^{3/2}\mathcal{Z}_T(X,3)-\frac{3}{2}\sqrt{[X,X]_T}\sum_{0\leq s\leq T}(\Delta X_s)^3\left\{\sqrt{2IQ_T}U^0+\mathcal{Z}_T(X,2)\right\}}{[X,X]_T^3}
\]
as $n\to\infty$.
\end{thm}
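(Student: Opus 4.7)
The plan is to apply the delta method for stable convergence, exactly as indicated in the paragraph preceding the statement, to the joint CLT supplied by Corollary \ref{cor.skew}. Introduce the shorthand
\[
B_n := \mathrm{RV}^n_T(X), \quad A_n := \sum_{i=1}^{\lfloor T/\Delta_n\rfloor}(\Delta^n_i X)^3, \quad B := [X,X]_T, \quad A := \sum_{0 \le s \le T}(\Delta X_s)^3,
\]
so that the identity $\lfloor T/\Delta_n\rfloor^{-1} RDSkew = A_n/B_n^{3/2}$ holds and the target quantity equals $A/B^{3/2}$. Define the smooth map $\phi:(0,\infty)\times\mathbb{R}\to\mathbb{R}$ by $\phi(x,y) = y\, x^{-3/2}$, with partials $\partial_x\phi(x,y) = -\tfrac{3}{2}\, y\, x^{-5/2}$ and $\partial_y\phi(x,y) = x^{-3/2}$. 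The estimation error in the theorem statement is then precisely $\phi(B_n, A_n) - \phi(B, A)$.

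By Corollary \ref{cor.skew},
\[
\Delta_n^{-1/2}\bigl(B_n - B,\; A_n - A\bigr) \to^\mathcal{S} \bigl(\sqrt{2IQ_T}\,U^0 + \mathcal{Z}_T(X,2),\; \mathcal{Z}_T(X,3)\bigr).
\]
Provided $[X,X]_T > 0$ almost surely---the standard non-degeneracy implicit in any meaningful skewness-type estimation problem---$\phi$ is $C^1$ in a neighbourhood of the (random) centering point $(B, A)$. Applying the delta method for stable convergence, Proposition 2(ii) of \cite{Podolskij2010}, with gradient $\bigl(-\tfrac{3A}{2B^{5/2}}, B^{-3/2}\bigr)$ evaluated at $(B, A)$, yields
\[
\Delta_n^{-1/2}\bigl(\phi(B_n, A_n) - \phi(B, A)\bigr) \to^\mathcal{S} -\frac{3A}{2 B^{5/2}}\bigl(\sqrt{2IQ_T}\,U^0 + \mathcal{Z}_T(X,2)\bigr) + \frac{1}{B^{3/2}}\mathcal{Z}_T(X,3).
\]
Multiplying the numerator and denominator by $B^3$ immediately rewrites this right-hand side as the variable $G_T$ displayed in the statement, finishing the argument.

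The only substantive point to watch is invoking the correct form of the delta method: the classical real-valued version does not cover the present situation because the centering point $(B, A)$ is itself random, whereas Proposition 2(ii) of \cite{Podolskij2010} is designed precisely for stable convergence with a random center (and, implicitly, for a limit whose second coordinate is $\mathcal{F}$-conditionally coupled with the first through the common auxiliary sequence $R_q$). Once that citation is in place and the mild non-degeneracy $[X,X]_T > 0$ is noted, the remainder of the proof is purely algebraic, so no real obstacle beyond these two points is anticipated.
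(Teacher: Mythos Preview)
Your proposal is correct and follows exactly the approach indicated in the paper: the paper's ``proof'' is the single sentence preceding the theorem, namely combining Corollary~\ref{cor.skew} with the delta method for stable convergence (Proposition~2(ii) of \cite{Podolskij2010}), which is precisely what you carry out. Your explicit computation of the gradient of $\phi(x,y)=yx^{-3/2}$ at $(B,A)$ and the resulting algebraic identification with $G_T$ are accurate.
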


\section{Microstructure noise and stochastic sampling}\label{section:noise}

It is widely recognized that modeling raw high-frequency financial data as \textit{direct} observations of an It\^o semimartingale $X$ is unrealistic. One common approach to deal with this issue is to assume that we observe the process $X$ with some measurement errors (referred to as \textit{microstructure noise}) rather than $X$ itself; see Chapter 7 of \cite{AJ2014} and references therein. Also, raw high-frequency financial data are typically recorded at stochastic sampling times, so the assumption that we observe data at equidistant sampling times is not applicable. Motivated by these reasons, in this section we consider an observed model which takes account of microstructure noise and stochastic sampling times, and develop an asymptotic theory for estimating \eqref{skewness} under such a situation.

Let us introduce the precise mathematical description of our model. We denote by $t^n_0,t^n_1,\dots$ the observation times which are assumed to be $(\mathcal{F}_t)$-stopping times and satisfy $t^n_i\uparrow\infty$ as $i\to\infty$. We also assume that
\begin{equation*}%\label{A4}
r_n(t):=\sup_{i\geq0}(t^n_i\wedge t-t^n_{i-1}\wedge t)\to^P0
\end{equation*}
as $n\to\infty$ for any $t\in\mathbb{R}_+$, with setting $t^n_{-1}=0$ for notational convenience.

The observed process $Y$ is contaminated by some noise:
\begin{equation*}%\label{Ymodel}
Y_t=X_t+\epsilon_t.
\end{equation*}
The noise process $\epsilon$ implicitly depends on $n\in\mathbb{N}$ and is defined on a very good filtered extension $\mathcal{B}_n=(\Omega_n,\mathcal{F}^n,(\mathcal{F}^n_t)_{t\geq0} ,P_n)$ of $\mathcal{B}$ (see page 36 of \cite{JP2012} for the definition of very good filtered extensions). $\epsilon$ is an $(\mathcal{F}^n_t)$-optional process and,  conditionally on $\mathcal{F}$, the sequence $(\epsilon_{t^n_i})_{i=0}^\infty$ is independent and the (conditional) distribution of $\epsilon_{t^n_i}$ is given by $Q_{t^n_i(\omega)}(\omega,du)$ for each $i=0,1,\dots$, where for each $t\geq0$ $Q_t(\omega,du)$ denotes a transition probability from $(\Omega,\mathcal{F}_t)$ to $\mathbb{R}$. We assume that the $Q_t(\omega,du)$'s satisfy the following condition:
\begin{equation}\label{condition:Q}
\left.\begin{array}{l}
\int uQ_t(\omega,du)=0\text{ for every }t\geq0,\\
\text{the process $(Q_t(\cdot,A))_{t\geq0}$ is $(\mathcal{F}_t)$-progressively measurable for any Borel set $A$ of $\mathbb{R}$.}
\end{array}
\right\}
\end{equation}
A concrete construction of such a noise process can be found in Section 2 of \cite{Koike2015jclt}.

\subsection{Construction of estimators}

As was pointed out by \cite{LWL2013}, the realized skewness $RDSkew$ is no longer a consistent estimator for \eqref{skewness} in the presence of microstructure noise even after appropriate scaling. Hence we modify the realized skewness by the \textit{pre-averaging procedure}, which is a general scheme to remove the effects of microstructure noise from observation data; see \cite{PV2009a} and Chapter 16 of \cite{JP2012} for example.

First, we choose a sequence $k_n$ of positive integers and a number $\theta\in(0,\infty)$ such that
$k_n=\theta\Delta_n^{-1/2} +o(\Delta_n^{-1/4})$
as $n\to\infty$. We also choose a continuous function $g:[0,1]\rightarrow\mathbb{R}$ which is piecewise $C^1$ with a piecewise Lipschitz derivative $g'$ and satisfies $g(0)=g(1)=0$ and $\int_0^1 g(x)^3\mathrm{d}x\neq0.$
After that, for any process $V$ we define the variables
\begin{equation*}
\overline{V}_{i}=\sum_{p=1}^{k_n-1}g\left(\frac{p}{k_n}\right)\left(V_{t^n_{i+p}}-V_{t^n_{i+p-1}}\right),\qquad
i=0,1,\dots.
\end{equation*}
In the following we develop a central limit theorem for the Pre-averaged Realized Volatility
\[
\PRV^n_T=\frac{1}{\psi_2k_n}\sum_{i=0}^{N^n_T-k_n+1}\left(\overline{Y}_i\right)^2-\frac{\psi_1}{2\psi_2k_n}\sum_{i=1}^{N^n_T}(Y_{t^n_i}-Y_{t^n_{i-1}})^2,
\]
and the Pre-averaged realized Cubic power Variation
\[
\PCV^n_T=\frac{1}{\psi_3k_n}\sum_{i=0}^{N^n_T-k_n+1}\left(\overline{Y}_i\right)^3,
\]
where $N^n_T=\max\{i:t^n_i\leq T\}$ and
\[
\psi_1=\int_0^1g'(x)^2dx,\qquad
\psi_2=\int_0^1g(x)^2dx,\qquad
\psi_3=\int_0^1g(x)^3dx.
\]
For the equidistant sampling case $t^n_i=i\Delta_n$, it is known that
\[
\PRV^n_T\to^P[X,X]_T,\qquad
\PCV^n_T\to^P\sum_{0\leq s\leq T}(\Delta X_s)^3
\]
as $n\to\infty$ from Theorems 16.2.1 and 16.6.1 of \cite{JP2012}. Therefore, we may expect that
\[
\frac{\PCV^n_T}{\left(\PRV^n_T\right)^{3/2}}
\]
is a consistent estimator for \eqref{skewness}. Our aim is to derive the asymptotic distribution of the above statistic.

%\subsection{Construction of the estimator}

\subsection{Asymptotic results}

\subsubsection{Notation}
We write $X^n\xrightarrow{ucp}X$ for processes $X^n$ and $X$ to express shortly that $\sup_{0\leq t\leq T}|X^n_t-X_t|\rightarrow^p0$. $\varpi$ denotes some (fixed) positive constant. We denote by $(\mathcal{G}_t)$ the smallest filtration containing $(\mathcal{F}_t)$ such that $\mathcal{G}_0$ contains the $\sigma$-field generated by $\mu$, i.e.~the $\sigma$-field generated by all the variables $\mu(A)$, where $A$ ranges all measurable subsets of $\mathbb{R}_+\times E$.

For any real-valued bounded measurable functions $u,v$ on $[0,1]$, we define the function $\phi_{u,v}$ on $[0,1]$ by
\[
\phi_{u,v}(y)=\int_y^1u(x-y)v(x)dx.
\]
Then we put
\begin{gather*}
\Phi_{22}=\int_{0}^{1}\phi_{g,g}(y)^2\mathrm{d}y,\qquad
\Phi_{12}=\int_{0}^{1}\phi_{g,g}(y)\phi_{g',g'}(y)\mathrm{d}y,\qquad
\Phi_{11}=\int_{0}^{1}\phi_{g',g'}(y)^2\mathrm{d}y
\end{gather*}
and
\begin{align*}
&\Phi_{3+}=\int_{0}^{1}\phi_{g,g^2}(y)^2\mathrm{d}y,\qquad
\Phi_{3-}=\int_{0}^{1}\phi_{g^2,g}(y)\mathrm{d}y,\\
&\Phi'_{3+}=\int_{0}^{1}\phi_{g',g^2}(y)^2\mathrm{d}y,\qquad
\Phi'_{3-}=\int_{0}^{1}\phi_{g^2,g'}(y)^2\mathrm{d}y,\\
&\Phi_{23+}=\int_{0}^{1}\phi_{g,g}(y)\phi_{g,g^2}(y)\mathrm{d}y,\qquad
\Phi_{23-}=\int_{0}^{1}\phi_{g,g}(y)\phi_{g^2,g}(y)\mathrm{d}y,\\
&\Phi'_{23+}=\int_{0}^{1}\phi_{g',g}(y)\phi_{g',g^2}(y)\mathrm{d}y,\qquad
\Phi'_{23-}=\int_{0}^{1}\phi_{g,g'}(y)\phi_{g^2,g'}(y)\mathrm{d}y.
\end{align*}

\if0
\begin{align*}
\phi_{1}(y)&=\int_y^1g'(x-y)g'(x)dx,&
\phi_{2}(y)&=\int_y^1g(x-y)g(x)dx,\\
\phi_{3+}(y)&=\int_y^1g(x-y)g(x)^2dx,&
\phi_{3-}(y)&=\int_y^1g(x)g(x-y)^2dx,\\
\phi_{4+}(y)&=\int_y^1g'(x-y)g(x)^2dx,&
\phi_{4-}(y)&=\int_y^1g'(x)g(x-y)^2dx
\end{align*}
\fi

We define the process $\alpha$ by $\alpha(\omega)_t=\int u^2Q_t(\omega,du)$.

\subsubsection{Assumptions}

We enumerate the assumptions which are imposed to derive our limit theorem.
%\begin{enumerate}[{[{A1}]}]
\begin{enumerate}[noitemsep,label={\normalfont[A1]}]

\item \label{hypo:A1} %$(t^n_p)_{p\geq0}$ and $(\tau^k_p)_{p\geq0}$ ($k=1,\dots,d$) are sequences of {$(\mathcal{F}^{(0)}_t)$}-stopping times and satisfy $(\ref{H1})$.
It holds that
\begin{equation}\label{A4}
r_n(t)=o_p(\Delta_n^{\xi})
\end{equation}
as $n\to\infty$ (note that $t^n_{-1}=0$ by convention) for every $t>0$ and every $\xi\in(0,1)$. Moreover, for each $n$ we have {a $(\mathcal{G}_t)$}-progressively measurable positive-valued process $G^n_t$ and a random subset $\mathcal{N}^n$ of $\mathbb{Z}_+$ satisfying the following conditions:

%\begin{enumerate}[(i)]
\begin{enumerate}[nosep,label=(\roman*),ref=\roman*]

\item $\{(\omega,p)\in\Omega\times\mathbb{Z}_+:p\in\mathcal{N}^n(\omega)\}$ is a measurable set of $\Omega\times\mathbb{Z}_+$. Moreover, there is a constant $\kappa\in(0,\frac{1}{2})$ such that $\#(\mathcal{N}^n\cap\{p:t^n_p\leq t\})=O_p(n^\kappa)$ as $n\to\infty$ for every $t>0$.

\item $E[\Delta_n^{-1}(t^n_{p+1}-t^n_p)\big|{\mathcal{G}_{t^n_p}}]=G^n_{t^n_p}$ for every $n$ and every $p\in\mathbb{Z}_+\setminus\mathcal{N}^n$.

\item \label{hypo:A1iii} There is a c\`adl\`ag $(\mathcal{F}_t)$-adapted positive valued process $G$ such that
%\begin{enumerate}[({iii}-a)]
\begin{enumerate}[nosep,label=(iii-\alph*)]

\item $\Delta_n^{-\varpi}(G^n-G)\xrightarrow{ucp}0$,

\item $G_{t-}>0$ for every $t>0$,

\item $G$ is an It\^o semimartingale of the form
\begin{align*}
\textstyle G_t=G_0+\int_0^t\widehat{b}_s\mathrm{d}s+\int_0^t\widehat{\sigma}_s\mathrm{d}W_s+\int_0^t\widehat{\sigma}'_s\mathrm{d}\widehat{W}_s
+\int_0^t\int_{|\widehat{\delta}(s,z)|\leq1\}}\widehat{\delta}(s,z)(\mu-\nu)(ds,dz)\\
\textstyle+\int_0^t\int_{|\widehat{\delta}(s,z)|>1}\widehat{\delta}(s,z)\mu(ds,dz),
\end{align*}
where $\widehat{b}_s$ is a locally bounded and $(\mathcal{F}_t)$-progressively measurable real-valued process,  $\widehat{\sigma}_s$ and $\widehat{\sigma}'_s$ are c\`adl\`ag $(\mathcal{F}_t)$-adapted processes, $\widehat{W}_s$ is an $(\mathcal{F}_t)$-standard Wiener process independent of $W$, and $\widehat{\delta}$ is an $(\mathcal{F}_t)$-predictable real-valued function on $\Omega\times\mathbb{R}_+\times E$ such that there is a sequence $(\widehat{\rho}_j)$ of $(\mathcal{F}_t)$-stopping times increasing to infinity and, for each $j$, a deterministic non-negative function $\widehat{\gamma}_j$ on $E$ satisfying $\int\widehat{\gamma}_j(z)^2\wedge1\lambda(\mathrm{d}z)<\infty$ and $|\widehat{\delta}(\omega,t,z)|\leq\widehat{\gamma}_j(z)$ for all $(\omega,t,z)$ with $t\leq\widehat{\rho}_j(\omega)$.

\end{enumerate}

\end{enumerate}
Furthermore, $\max_{p=0,1,\dots,N^n_T}E\left[\Delta_n^{-1}(t^n_{p+1}-t^n_{p})|\mathcal{F}_{t^n_{p}}\right]$ is tight as $n\to\infty$ for every $t>0$.

\end{enumerate}

%\begin{enumerate}[{[{A2}]}]
\begin{enumerate}[label={\normalfont[A2]}]

\item \label{hypo:A2} The volatility process $\sigma$ is an It\^o semimartingale of the form
\begin{align*}
\textstyle\sigma_t=\sigma_0+\int_0^t\widetilde{b}_s\mathrm{d}s+\int_0^t\widetilde{\sigma}_s\mathrm{d}W_s+\int_0^t\widetilde{\sigma}'_s\mathrm{d}\widetilde{W}_s
+\int_0^t\int_{|\widetilde{\delta}(s,z)|\leq1\}}\widetilde{\delta}(s,z)(\mu-\nu)(ds,dz)\\
\textstyle+\int_0^t\int_{|\widetilde{\delta}(s,z)|>1}\widetilde{\delta}(s,z)\mu(ds,dz)
\end{align*}
where $\widetilde{b}_s$ is a locally bounded and $(\mathcal{F}_t)$-progressively measurable  process, $\widetilde{\sigma}_s$ and $\widetilde{\sigma}'_s$ are c\`adl\`ag {$(\mathcal{F}_t)$}-adapted processes, $\widetilde{W}_s$ is an $(\mathcal{F}_t)$-standard Wiener process independent of $W$, and $\widetilde{\delta}$ is an {$(\mathcal{F}_t)$}-predictable function on $\Omega\times\mathbb{R}_+\times E$.

Moreover, for each $j$ there is an {$(\mathcal{F}_t)$}-stopping time $\rho_j$, a bounded {$(\mathcal{F}_t)$}-progressively measurable process $b_s$, a deterministic non-negative function $\gamma_j$ on $E$, and a constant $\Lambda_j$ such that $\rho_j\uparrow\infty$ as $j\to\infty$ and, for each $j$,
%\begin{enumerate}[(i)]
\begin{enumerate}[nosep,label=(\roman*)]

\item  $b(\omega)_s=b(j)(\omega)_s$ if $s<\rho_j(\omega)$,

\item $E\left[|b(j)_{t_1}-b(j)_{t_2}|^2|\mathcal{F}_{t_1\wedge t_2}\right]
\leq \Lambda_j E\left[|t_1-t_2|^\varpi|\mathcal{F}_{t_1\wedge t_2}\right]$ for any $(\mathcal{F}_t)$-stopping times $t_1$ and $t_2$ bounded by $j$,

\item {$\int\left\{\gamma_j(z)^2\wedge1\right\}\lambda(\mathrm{d}z)<\infty$} and $|\delta(\omega,t,z)|\vee|\widetilde{\delta}(\omega,t,z)|\leq\gamma_j(z)$ for all $(\omega,t,z)$ with $t\leq\rho_j(\omega)$,

\item $E\left[|\delta(t_1\wedge\rho_j,z)-\delta(t_2\wedge\rho_j,z)|^2|\mathcal{F}_{t_1\wedge t_2}\right]
\leq \Lambda_j\gamma_j(z)^2E\left[|t_1-t_2|^\varpi|\mathcal{F}_{t_1\wedge t_2}\right]$ for any $(\mathcal{F}_t)$-stopping times $t_1$ and $t_2$ bounded by $j$.

\end{enumerate}

\end{enumerate}

%\begin{enumerate}[{[{A3}]}]
\begin{enumerate}[label={\normalfont[A3]}]

\item \label{hypo:A3} $Q_t$'s satisfy \eqref{condition:Q} and there is a sequence $(\rho'_{j})_{j\geq1}$ of {$(\mathcal{F}_t)$}-stopping times increasing to infinity such that
\[
\sup_{\omega\in\Omega,t<\rho'_{j}(\omega)}\int u^6 Q_t(\omega,du)<\infty.
\]
Moreover, for each $j$ there is a bounded c\`adl\`ag {$(\mathcal{F}_t)$}-adapted process $\alpha(j)_t$ and a constant $\Lambda_j'$ such that
%\begin{enumerate}[(i)]
\begin{enumerate}[nosep,label=(\roman*)]

\item  $\alpha(j)(\omega)_t=\alpha(\omega)_t$ if $t<\rho'_j(\omega)$,

\item $E\left[|\alpha(j)_{t_1}-\alpha(j)_{t_2}|^2|\mathcal{F}_{t_1\wedge t_2}\right]
\leq \Lambda'_j E\left[|t_1-t_2|^\varpi|\mathcal{F}_{t_1\wedge t_2}\right]$ for any $(\mathcal{F}_t)$-stopping times $t_1$ and $t_2$ bounded by $j$.

\end{enumerate}

\end{enumerate}

\begin{enumerate}[label={\normalfont[A4]}]

\item \label{hypo:A4} A regular conditional probability of $P$ given $\mathcal{H}$ exists for any sub-$\sigma$-field $\mathcal{H}$ of $\mathcal{F}$.%$(\Omega^{(0)},\mathcal{F}^{(0)})$ is a standard measurable space.

\end{enumerate}

\begin{rmk}
Assumptions \ref{hypo:A1}--\ref{hypo:A4} are almost identical to the ones imposed to prove a central limit theorem for $\PRV^n_T$ in \cite{Koike2015jclt}. A few differences appear in \ref{hypo:A1} and \ref{hypo:A3}: We add an additional (mild) assumption on the tightness of random variables
\[
\max_{p=0,1,\dots,N^n_T}E\left[\Delta_n^{-1}(t^n_{p+1}-t^n_{p})|\mathcal{F}_{t^n_{p}}\right].
\]
to \ref{hypo:A1} (note that this assumption automatically follows from \ref{hypo:A1}(ii)--(iii) if $\mathcal{N}^n=\emptyset$). In the meantime, \ref{hypo:A3} requires the finiteness of the sixth moment of the noise process.
\end{rmk}

\begin{rmk}
Since Assumption \ref{hypo:A1} contains some non-standard aspects compared with common ones used in the literature, we briefly make some comments on it (see Remark 3.2 of \cite{Koike2015jclt} for more details). 
\begin{enumerate}

\item A typical example satisfying this assumption is the \textit{restricted discretization scheme} introduced in Chapter 14 of \cite{JP2012}, where the $t^n_p$'s are modeled as
\begin{equation*}
t^n_p=t^n_{p-1}+\theta^n_{t^n_{p-1}}\varepsilon^n_p,\qquad p=1,2,\dots,
\end{equation*}
with $\theta^n$ being a c\`adl\`ag $(\mathcal{F}_t)$-adapted process, $(\varepsilon^n_p)_{p\geq1}$ being a sequence of i.i.d.~positive variables independent of $b$, $\sigma$, $\delta$, $W$, $\mu$, and such that $E[\varepsilon^n_p]=1$ and $E[(\varepsilon^n_p)^r]<\infty$ for every $r>0$, and $t^n_0=0$. 
%To ensure that $T_p$ is an $(\mathcal{F}^{(0)}_t)$-stopping time, we assume that $\varepsilon(n,p)$ is $\mathcal{F}^{(0)}_0$-measurable for all $p$. Let $(\mathcal{H}^n_t)$ be the filtration generated by the processes $b_t$, $\sigma_t$, $\delta_t$, $W_t$, $\sum_{p}1_{\{T_p\leq t\}}$. 
An appropriate construction of the filtration $(\mathcal{F}_t)$ allows us to assume the independence between $\epsilon^n_p$ and $\mathcal{F}_{t^n_{p-1}}$ for all $n,p$. 
In this case we have \ref{hypo:A1}(i)--(ii) by setting $\mathcal{N}^n=\emptyset$ and $G^n=\Delta_n^{-1}\theta^n$. Then, \ref{hypo:A1}(iii) corresponds to (a weaker version of) Assumption (E) of \cite{JP2012}, and \eqref{A4} follows from Lemma 14.1.5 of \cite{JP2012} (note that the last condition on tightness is automatically satisfied once \ref{hypo:A1}(iii) holds true since $\mathcal{N}^n=\emptyset$). 

\item The main reason why we introduce an involved assumption compared with the standard ones is that our assumption does not rule out the dependence between $\varepsilon^n_p$'s and $X$ (see Example 4.1 of \cite{Koike2015jclt} for instance). %The importance of such dependence has recently been emphasized in econometric literature; see e.g.~\citet{RW2011}. 
However, we remark that \ref{hypo:A1} rules out some kind of dependence between the observation times and the jumps of the observed process because we take the conditional expectation given $\mathcal{G}_{t^n_p}$'s instead of $\mathcal{F}_{t^n_p}$'s in \ref{hypo:A1}(ii). In fact, our assumption does not allow the case that $t^n_p$'s are given by hitting times of a pure-jump L\'evy process whose jump measure is $\mu$ (note that in this case the L\'evy process must have infinite activity jumps due to \eqref{A4}). We however note that our assumption does not exclude such a dependence completely; see Example 4.2 of \cite{Koike2015jclt} for instance. 

\item The set $\mathcal{N}^n$ can be interpreted as a set of exceptional indices $p$ for which the equation $E[\Delta_n^{-1}(t^n_{p+1}-t^n_p)\big|{\mathcal{G}_{t^n_p}}]=G^n_{t^n_p}$ does not hold true. This additional complexity is useful to ensure the stability of Assumption \ref{hypo:A1} under the localization used in the proof (see Lemma 6.1 of \cite{Koike2015jclt}). 
Non-empty $\mathcal{N}^n$ also excludes some trivial exceptions of \ref{hypo:A1} with $\mathcal{N}^n=\emptyset$, such as $t^n_0=\log n/n$ and $t^n_p=t^n_{p-1}+\Delta_n$ for $p\geq1$. 

\end{enumerate}
\end{rmk}

\if0
\begin{rmk}
It would be worth mentioning that Assumption \ref{hypo:A1} does not rule out situations where the skewness of the returns $\{X^c_{t^n_i}-X^c_{t^n_{i-1}}\}_{i=1}^{N^n_T}$ of the continuous part of $X$ is not negligible even asymptotically; see e.g.~Example 3.2 from \cite{Koike2015} and Example 5 from \cite{LMRZZ2014} for details. We note that, for the equidistant sampling scheme $t^n_i=i\Delta_n$, it vanishes if $b_s\equiv0$ and the volatility $\sigma_s$ is constant and is asymptotically negligible in the sense that $\sqrt{\Delta_n}\sum_{i=1}^{N^n_T}(X^c_{t^n_i}-X^c_{t^n_{i-1}})^3\to^P0$ as $n\to\infty$. This is the main reason why the asymptotic distribution of the realized skewness obtained in the previous section is centered. In the meantime, we show below that the estimator $\PCV^n_T/\left(\PRV^n_T\right)^{3/2}$ is asymptotically centered despite the above fact as a byproduct of the pre-averaging procedure.
\end{rmk}
\fi

\subsubsection{Results}

\begin{thm}\label{mainthm}
Suppose that \ref{hypo:A1}--\ref{hypo:A4} are satisfied. Then
\[
\left(\Delta_n^{-1/4}\left(\PRV^n_T-[X,X]_T\right),\Delta_n^{-1/4}\left(\PCV^n_T-\sum_{0\leq s\leq T}(\Delta X_s)^3\right)\right)^*\to^\mathcal{S}\Gamma_T^{1/2}\zeta
\]
as $n\to\infty$, where $\zeta$ is a bivariate standard normal variable which is defined on an extension of $\mathcal{B}$ and independent of $\mathcal{F}$, and $\Gamma_T$ is the $\mathbb{R}^2\otimes\mathbb{R}^2$-valued variable given by
\[
\Gamma_T=
\left[
\begin{array}{cc}
\Gamma^c_T+\overline{\Gamma}_T^{11}  & \overline{\Gamma}_T^{12}  \\
\overline{\Gamma}_T^{12}  & \overline{\Gamma}_T^{22}
\end{array}
\right]
\]
with
\begin{align*}
\Gamma^c_T&=\frac{4}{\psi_2^{2}}\int_0^T\left[\Phi_{22}\theta\sigma^4_sG_s
+2\frac{\Phi_{12}}{\theta}\sigma^2_s\alpha_s
+\frac{\Phi_{11}}{\theta^3}\frac{\alpha_s^{2}}{G_s}\right]\mathrm{d}s,\\
\overline{\Gamma}^{11}_T&=\frac{4}{\psi_2^{2}}\sum_{0\leq s\leq T}(\Delta X_s)^2\left\{\Phi_{22}\theta\left(\sigma_s^2G_s+\sigma_{s-}^2G_{s-}\right)+\frac{\Phi_{12}}{\theta}\left(\alpha_s+\alpha_{s-}\right)\right\},\\
\overline{\Gamma}^{12}_T&=\frac{6}{\psi_2\psi_3}\sum_{0\leq s\leq T}(\Delta X_s)^3\left\{\theta\left(\Phi_{23+}\sigma_s^2G_s+\Phi_{23-}\sigma_{s-}^2G_{s-}\right)+\theta^{-1}\left(\Phi'_{23+}\alpha_s+\Phi'_{23-}\alpha_{s-}\right)\right\},\\
\overline{\Gamma}^{22}_T&=\frac{9}{\psi_3^{2}}\sum_{0\leq s\leq T}(\Delta X_s)^4\left\{\theta\left(\Phi_{3+}\sigma_s^2G_s+\Phi_{3-}\sigma_{s-}^2G_{s-}\right)+\theta^{-1}\left(\Phi'_{3+}\alpha_s+\Phi'_{3-}\alpha_{s-}\right)\right\}.
\end{align*}
\end{thm}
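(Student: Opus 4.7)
The plan is to adapt the pre-averaging CLT for $\PRV^n_T$ established in \cite{Koike2015jclt} and combine it with the jump-centered analysis used in the proof of Theorem \ref{thm2} to obtain the joint limit for $(\PRV^n_T,\PCV^n_T)$. First I would apply the standard localization: using the stopping times $\rho_j,\rho'_j,\widehat{\rho}_j,\tau_k$ from \ref{asu2} and \ref{hypo:A1}--\ref{hypo:A3}, I may assume that $b,\sigma,G,G^n,\alpha$ and the $\widehat{\cdot},\widetilde{\cdot}$ coefficients are uniformly bounded and that $|\delta|,|\widetilde{\delta}|,|\widehat{\delta}|$ are dominated by a deterministic $\gamma$ with $\int(\gamma^2\wedge1)d\lambda<\infty$. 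For a small threshold $\varepsilon>0$ I then split
\[
X=X(\varepsilon)+X'(\varepsilon),\qquad X'(\varepsilon)=\sum_{q:|\Delta X_{T_q}|>\varepsilon}\Delta X_{T_q}\mathbf{1}_{[T_q,\infty)},
\]
so that $X'(\varepsilon)$ has only finitely many jumps on $[0,T]$; the limit $\varepsilon\downarrow0$ is taken at the end.

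Write $\overline{Y}_i=\overline{Z}_i+\overline{X'(\varepsilon)}_i$ with $\overline{Z}_i=\overline{X(\varepsilon)}_i+\overline{\epsilon}_i$. Since $k_n\Delta_n\to0$ and $X'(\varepsilon)$ has only finitely many jumps, with probability tending to one each pre-averaging window contains at most one big jump, and when window $i$ overlaps a unique $T_q$ one has the exact identity $\overline{X'(\varepsilon)}_i=g((i_q-i)/k_n)\Delta X_{T_q}$ with $i_q$ the index satisfying $T_q\in(t^n_{i_q-1},t^n_{i_q}]$. Expanding cubes,
\[
(\overline{Y}_i)^3=(\overline{Z}_i)^3+3(\overline{Z}_i)^2\overline{X'(\varepsilon)}_i+3\overline{Z}_i\bigl(\overline{X'(\varepsilon)}_i\bigr)^2+\bigl(\overline{X'(\varepsilon)}_i\bigr)^3,
\]
and an analogous (simpler) expansion applies to $(\overline{Y}_i)^2$ in $\PRV^n_T$. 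Summing the last cubic term over the $O(k_n)$ windows overlapping $T_q$ and using $\sum_{p=1}^{k_n-1}g(p/k_n)^3=k_n\psi_3+O(1)$ produces the centering $\sum_{s\leq T}(\Delta X_s)^3$, and the corresponding sum of $(\overline{X'(\varepsilon)}_i)^2$ yields the $X'(\varepsilon)$-part of $[X,X]_T$ in $\PRV^n_T$.

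The remaining terms govern the $\Delta_n^{-1/4}$-scale fluctuations. The ``no-jump'' windows, built only from $\overline{Z}_i$, recover the $\Gamma^c_T$ piece in the (1,1) entry via the $\PRV^n_T$ CLT of \cite{Koike2015jclt}. For $\PCV^n_T$, a direct second-moment calculation shows $\Delta_n^{-1/4}(k_n\psi_3)^{-1}\sum_i(\overline{Z}_i)^3\to^P 0$: each term is $O_P(\Delta_n^{3/4})$, the correlation length is $k_n\sim\Delta_n^{-1/2}$, and summation over $N^n_T\sim\Delta_n^{-1}$ indices yields overall size $O_P(\Delta_n^{1/2})=o_P(\Delta_n^{1/4})$. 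Hence the continuous part contributes neither to $\overline{\Gamma}^{12}_T$ nor to $\overline{\Gamma}^{22}_T$. The jump contribution is computed one jump at a time: conditionally on $\mathcal{F}_{T_q-}\vee\mathcal{F}_{T_q}$, the restricted vector
\[
\Delta_n^{-1/4}\Bigl(\sum_{i\text{ near }T_q}2\overline{Z}_i\overline{X'(\varepsilon)}_i,\ \sum_{i\text{ near }T_q}\bigl[3(\overline{Z}_i)^2\overline{X'(\varepsilon)}_i+3\overline{Z}_i\bigl(\overline{X'(\varepsilon)}_i\bigr)^2\bigr]\Bigr)
\]
is asymptotically centered Gaussian with covariance matrix assembled from $\sigma^2G$ (Brownian) and $\alpha$ (noise) evaluated at $T_q-$ and $T_q$. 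Windows whose pre-averaging interval lies before $T_q$ pair the single $g$-factor from $\overline{X'(\varepsilon)}_i$ with the $g^2$-factor from $\overline{Z}_i^2$, producing integrals of the form $\phi_{g,g^2}$ (and $\phi_{g',g^2}$ for the noise side), while windows lying after $T_q$ pair the $g^2$-factor with a single $g$, producing $\phi_{g^2,g}$ (and $\phi_{g^2,g'}$); these are precisely the $\Phi_{23\pm},\Phi'_{23\pm},\Phi_{3\pm},\Phi'_{3\pm}$ appearing in $\overline{\Gamma}^{12}_T$ and $\overline{\Gamma}^{22}_T$, and the same left/right mechanism (with $\phi_{g,g}$ and $\phi_{g',g'}$) yields the $\Phi_{22}$- and $\Phi_{12}$-terms in $\overline{\Gamma}^{11}_T$.

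To conclude, the per-jump contributions are conditionally independent given $\mathcal{F}$ because their windows are disjoint for large $n$, and combined with the continuous piece via a joint stable CLT for triangular arrays (Section 2.2.5 of \cite{JP2012}) they yield the full covariance $\Gamma_T$ for each fixed $\varepsilon$. Letting $\varepsilon\downarrow0$, the small-jump residuals for $\PRV^n_T$ are controlled by the $L^2$-estimates of \cite{Koike2015jclt}, and an analogous moment bound for the pre-averaged cubic variation on $X(\varepsilon)$ (obtained by adapting the smoothness-in-$g$ arguments behind Theorem \ref{thm2} and exploiting that the noise-induced third-moment contribution is $O_P(\Delta_n^{1/2})=o_P(\Delta_n^{1/4})$) closes the double-limit argument. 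The main obstacle is the block CLT around each big jump: one must simultaneously track the weighted Brownian innovations, the stochastic sampling weights encoded through $G^n$ and the exceptional index set $\mathcal{N}^n$, and the microstructure noise inside the pre-averaging window, and then correctly identify the asymmetric $\phi$-integrals $\phi_{g,g^2},\phi_{g^2,g},\phi_{g',g^2},\phi_{g^2,g'}$ whose integration yields the four asymmetric pairs $(\Phi_{3+},\Phi_{3-})$, $(\Phi'_{3+},\Phi'_{3-})$, $(\Phi_{23+},\Phi_{23-})$, $(\Phi'_{23+},\Phi'_{23-})$ in the statement.
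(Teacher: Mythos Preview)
Your overall strategy matches the paper's: localize, split off big jumps at a threshold, expand the pre-averaged cubes, establish a block CLT around each big jump, and close via a double limit. However, there is a concrete error in your treatment of the ``continuous'' piece $(\overline{Z}_i)^3$ with $\overline{Z}_i=\overline{X(\varepsilon)}_i+\overline{\epsilon}_i$. You claim that a direct second-moment calculation gives $\Delta_n^{-1/4}(k_n\psi_3)^{-1}\sum_i(\overline{Z}_i)^3\to^P0$ for fixed $\varepsilon$, but this is false: $X(\varepsilon)$ still carries the small jumps, and $\tfrac{1}{k_n\psi_3}\sum_i(\overline{X(\varepsilon)}_i)^3$ converges to $\sum_{|\Delta X_s|\le\varepsilon}(\Delta X_s)^3$, which is nonzero for fixed $\varepsilon$ and blows up after multiplication by $\Delta_n^{-1/4}$. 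Your variance heuristic (term size $\Delta_n^{3/4}$, correlation length $k_n$) controls only the fluctuation around the mean, not the mean itself. Correspondingly, the big-jump term $(\overline{X'(\varepsilon)}_i)^3$ produces only $\sum_{|\Delta X_s|>\varepsilon}(\Delta X_s)^3$, not the full centering $\sum_{s\le T}(\Delta X_s)^3$ that you assert.

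The paper repairs this with a finer decomposition that separates noise from the small-jump semimartingale and keeps the small-jump centering explicit. It writes $\Delta_n^{-1/4}\bigl(\PCV^n_T-\sum(\Delta X_s)^3\bigr)$ as six pieces $\mathbb{I}_n(m),\dots,\mathbb{VI}_n(m)$; your $(\overline{Z}_i)^3$ term corresponds to $\mathbb{I}_n(m)$ (noise part, $(\overline{X(m)}_i+\overline{\epsilon}_i)^3-(\overline{X(m)}_i)^3$) plus $\mathbb{II}_n(m)$ (small-jump part, $\tfrac{1}{\psi_3 k_n}\sum_i(\overline{X(m)}_i)^3-\sum(\Delta X(m)_s)^3$). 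Proving $\lim_m\limsup_n P(|\mathbb{II}_n(m)|>\eta)=0$ is the hardest step and does \emph{not} follow from a second-moment bound: the paper further splits $\mathbb{II}_n(m)$ into four sub-terms and, for the pure small-jump piece, applies It\^o/integration-by-parts to $(\overline{Z(m)}_i)^3$ and controls the resulting stochastic integrals via the Lenglart-type inequality of Lemma~\ref{lenglart2}, together with an additional localization of the sampling times (the auxiliary stopping times $t^n_i(K)$) needed to exploit the conditional moment bound in \ref{hypo:A1}. Your closing remark about ``adapting the smoothness-in-$g$ arguments behind Theorem~\ref{thm2}'' points in the right direction, but the actual mechanism in the pre-averaged, irregularly sampled setting is substantially more delicate than in the non-noisy equidistant case.
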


A proof of the above result is given in Section \ref{proof:mainthm}. Combining the above result with the delta method for stable convergence, we obtain the asymptotic distribution of $\PCV^n_T/\left(\PRV^n_T\right)^{3/2}$ as follows:
\begin{thm}\label{thm:noisy-skew}
Under the assumptions of Theorem \ref{mainthm}, we have
\[
\Delta_n^{-1/4}\left(\frac{\PCV^n_T}{\left(\PRV^n_T\right)^{3/2}}-\frac{\sum_{0\leq s\leq T}(\Delta X_s)^3}{([X,X]_T)^{3/2}}\right)\to^\mathcal{S}\sqrt{d_{1,T}^2(\Gamma^c_T+\overline{\Gamma}_T^{11})+2d_{1,T}d_{2,T}\overline{\Gamma}_T^{12}+d_{2,T}^2\overline{\Gamma}_T^{22}}\times\zeta
\]
as $n\to\infty$, where $\zeta$ is a standard normal variable which is defined on an extension of $\mathcal{B}$ and independent of $\mathcal{F}$, and
\[
d_{1,T}=-\frac{3}{2}\frac{\sum_{0\leq s\leq T}(\Delta X_s)^3}{([X,X]_T)^{5/2}},\qquad
d_{2,T}=\frac{1}{([X,X]_T)^{3/2}}.
\]
\end{thm}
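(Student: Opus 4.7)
The plan is to derive Theorem \ref{thm:noisy-skew} as a direct consequence of the bivariate stable convergence in Theorem \ref{mainthm} via the delta method for stable convergence (Proposition 2(ii) of \cite{Podolskij2010}), in complete analogy with the derivation of Theorem \ref{thm:skew} from Corollary \ref{cor.skew}. Define $f:(0,\infty)\times\mathbb{R}\to\mathbb{R}$ by $f(x,y)=y/x^{3/2}$, so that
\[
\frac{\PCV^n_T}{(\PRV^n_T)^{3/2}}=f(\PRV^n_T,\PCV^n_T)
\quad\text{and}\quad
\frac{\sum_{0\leq s\leq T}(\Delta X_s)^3}{([X,X]_T)^{3/2}}=f\bigl([X,X]_T,\textstyle\sum_{0\leq s\leq T}(\Delta X_s)^3\bigr).
\]
Computing the gradient yields $\partial_x f(x,y)=-\tfrac{3}{2}y x^{-5/2}$ and $\partial_y f(x,y)=x^{-3/2}$, and evaluating at the probability limit one recognizes exactly the quantities $d_{1,T}$ and $d_{2,T}$ appearing in the statement.

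Next I would apply the delta method for stable convergence to the bivariate convergence established in Theorem \ref{mainthm}. This gives
\[
\Delta_n^{-1/4}\Bigl(f(\PRV^n_T,\PCV^n_T)-f([X,X]_T,\textstyle\sum_{0\leq s\leq T}(\Delta X_s)^3)\Bigr)
\to^{\mathcal S} d_{1,T}Z_1+d_{2,T}Z_2,
\]
where $(Z_1,Z_2)^{*}:=\Gamma_T^{1/2}\zeta$ is the limit variable from Theorem \ref{mainthm}. Since $\zeta$ is a bivariate standard normal independent of $\mathcal{F}$ and $\Gamma_T$ is $\mathcal{F}$-measurable, conditionally on $\mathcal{F}$ the vector $(Z_1,Z_2)^{*}$ is centered Gaussian with covariance matrix $\Gamma_T$. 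Consequently the linear combination $d_{1,T}Z_1+d_{2,T}Z_2$ is $\mathcal{F}$-conditionally centered Gaussian with variance
\[
V_T=d_{1,T}^{\,2}\Gamma_T^{11}+2d_{1,T}d_{2,T}\Gamma_T^{12}+d_{2,T}^{\,2}\Gamma_T^{22}
=d_{1,T}^{\,2}(\Gamma^c_T+\overline{\Gamma}_T^{11})+2d_{1,T}d_{2,T}\overline{\Gamma}_T^{12}+d_{2,T}^{\,2}\overline{\Gamma}_T^{22},
\]
by the block structure of $\Gamma_T$. Hence the limit can be represented as $\sqrt{V_T}\,\zeta'$ for a scalar standard normal $\zeta'$ defined on a further extension of $\mathcal{B}$ and independent of $\mathcal{F}$, as required.

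The only subtlety is that $f$ is only $C^1$ on $\{x>0\}$, so the delta method requires working on the event $\{[X,X]_T>0\}$. Under Assumption \ref{hypo:A2} together with the non-degeneracy of $\sigma$ implicit in the formulation of the target quantity (which is otherwise undefined), $[X,X]_T>0$ almost surely and $\PRV^n_T\to^P[X,X]_T>0$ by Theorem \ref{mainthm}, so $f$ is smooth in an $\mathcal{F}$-measurable neighborhood of the limit point with probability tending to one, and the delta method applies. Because the preceding calculation is purely algebraic — the only substantive input is Theorem \ref{mainthm} and the stability/independence properties of stable convergence — no step presents a genuine obstacle; the main care needed is to keep track of the block decomposition of $\Gamma_T$ so that the $\Gamma^c_T$ term attaches only to the $d_{1,T}^{\,2}$ coefficient, in agreement with the stated formula.
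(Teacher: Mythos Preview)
Your proposal is correct and follows essentially the same approach as the paper, which simply states that Theorem \ref{thm:noisy-skew} is obtained by ``combining the above result with the delta method for stable convergence'' applied to Theorem \ref{mainthm}. Your explicit identification of $f(x,y)=y/x^{3/2}$, its gradient $(d_{1,T},d_{2,T})$, and the resulting $\mathcal{F}$-conditional variance $V_T$ via the block structure of $\Gamma_T$ is exactly the computation the paper leaves implicit.
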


\begin{rmk}
The stable convergence result on the first component $\Delta_n^{-1/4}\left(\PRV^n_T-[X,X]_T\right)$ in Theorem \ref{mainthm} is a special case of Theorem 3.1 from \cite{Koike2015jclt}. 
In contrast, the stable convergence result on the second component 
\[
\Delta_n^{-1/4}\left(\PCV^n_T-\sum_{0\leq s\leq T}(\Delta X_s)^3\right)
\]
in Theorem \ref{mainthm} is new even in the equidistant sampling case $t^n_i=i\Delta_n$. As is remarked in the Introduction, Theorem 16.3.1 of \cite{JP2012} deals with the asymptotic distribution of the statistic
\[
\frac{1}{k_n}\sum_{i=0}^{N^n_T-k_n+1}f(\overline{Y}_i)
\]
in the equidistant sampling case for a $C^2$ function $f:\mathbb{R}\to\mathbb{R}$ which is a linear combinations of positively homogeneous functions with degree (strictly) bigger than 3. Since the cubic function $f(x)=x^3$ does not have this property, Theorem 16.3.1 of \cite{JP2012} is not applicable to deriving the asymptotic distribution of $\PCV^n_T$.  
\end{rmk}

\begin{rmk}
The main difference between the asymptotic distributions given in Theorems \ref{thm2}--\ref{thm:skew} and Theorems \ref{mainthm}--\ref{thm:noisy-skew} is that the former ones are in general \texttt{not} $\mathcal{F}$-conditionally Gaussian due to the additional randomness caused by the uniform variables $U_q,U_q'$, while the latter ones are always $\mathcal{F}$-conditionally Gaussian. 
This is a byproduct of the pre-averaging procedure and commonly observed in the literature of pre-averaging estimators for functionals of jumps. 
\end{rmk}

\begin{rmk}
If $X$ is continuous, Theorem \ref{mainthm} implies $\Delta_n^{-1/4}\PCV^n_T\to^P0$ as $n\to\infty$, hence we will need a larger scaling factor than $\Delta_n^{-1/4}$ to obtain a non-degenerate asymptotic distribution of $\PCV^n_T$. 
To the best of our knowledge, nothing is known about the non-trivial asymptotic distribution of $\PCV^n_T$ even in the equidistant sampling setting. 
An analogy to the non-noisy case suggests that the proper scaling factor is $\Delta_n^{-1/2}$ and $\Delta_n^{-1/2}\PCV^n_T$ would converge stably in law to a mixed normal distribution with a non-zero conditional mean (cf.~Example 6 of \cite{Kinnebrock2008} for the equidistant sampling setting and Theorem 2 of \cite{LMRZZ2014} for an irregular sampling setting). 
\end{rmk}

\begin{rmk}
%It would be worth mentioning that Assumption \ref{hypo:A1} does not rule out situations where the skewness of the returns $\{X^c_{t^n_i}-X^c_{t^n_{i-1}}\}_{i=1}^{N^n_T}$ of the continuous part of $X$ is not negligible even asymptotically; see e.g.~Example 3.2 from \cite{Koike2015} and Example 5 from \cite{LMRZZ2014} for details. We note that, for the equidistant sampling scheme $t^n_i=i\Delta_n$, it vanishes if $b_s\equiv0$ and the volatility $\sigma_s$ is constant and is asymptotically negligible in the sense that $\sqrt{\Delta_n}\sum_{i=1}^{N^n_T}(X^c_{t^n_i}-X^c_{t^n_{i-1}})^3\to^P0$ as $n\to\infty$. This is the main reason why the asymptotic distribution of the realized skewness obtained in the previous section is centered. In the meantime, we show below that the estimator $\PCV^n_T/\left(\PRV^n_T\right)^{3/2}$ is asymptotically centered despite the above fact as a byproduct of the pre-averaging procedure.
We note that the main reason why the asymptotic distribution of the realized skewness obtained in the previous section is centered is because we have $\sqrt{\Delta_n}\sum_{i=1}^{N^n_T}(X^c_{t^n_i}-X^c_{t^n_{i-1}})^3\to^P0$ as $n\to\infty$ \textit{as long as $t^n_i=i\Delta_n$}. This is in general not true when we consider more general sampling schemes as $(t^n_i)$ such that they depend on the process $X^c$. In particular, Assumption \ref{hypo:A1} does not rule out situations where the variables $\sqrt{\Delta_n}\sum_{i=1}^{N^n_T}(X^c_{t^n_i}-X^c_{t^n_{i-1}})^3$ converge in probability to some non-zero random variable as $n\to\infty$; see e.g.~Example 3.2 from \cite{Koike2015} and Example 5 from \cite{LMRZZ2014}. In such a situation we conjecture that the asymptotic distribution of the realized skewness estimator would be no longer centered. In contrast, Theorem \ref{thm:noisy-skew} tells us that the estimator $\PCV^n_T/\left(\PRV^n_T\right)^{3/2}$ is asymptotically centered even under \ref{hypo:A1}. This is another byproduct of the pre-averaging procedure.
\end{rmk}

\begin{rmk}
%In the absence of noise the simultaneous presence of jumps and the randomness of observation times makes the asymptotic distribution of statistics of the form \eqref{functional} more complex, as seen in \cite{BV2014}, \cite{VZ2016} and \cite{MV2016}. In this sense the result of Theorem \ref{mainthm} contrasts with non-noisy cases because the estimators $\PRV^n_T$ and $\PCV^n_T$ are asymptotically mixed normal even with stochastic sampling times. This is also a byproduct of the pre-averaging procedure.
In the absence of noise the simultaneous presence of jumps and the randomness of observation times typically adds more complexity to the asymptotic distribution of statistics of the form \eqref{functional} more complex, as seen in \cite{BV2014}, \cite{VZ2016} and \cite{MV2016}. 
In this sense the result of Theorem \ref{mainthm} again contrasts with non-noisy cases because the estimators $\PRV^n_T$ and $\PCV^n_T$ are asymptotically mixed normal even with stochastic sampling times. This is also a byproduct of the pre-averaging procedure.
\end{rmk}

%%%%%%%%%%%%%%%%%%%%%%%%%%%%%%%%%%%%%%%%%%%%%%
%                                              Conclusion
%%%%%%%%%%%%%%%%%%%%%%%%%%%%%%%%%%%%%%%%%%%%%%
\if0
\section{Conclusion}\label{sec6}

In this paper, we have presented various limit theorems for realized variations of a general semimartingale. In particular, we have shown central limit theorems for the realized cubic power variation of an It\^o semimartingale in both the continuous case and the presence of jumps, which are not encompassed with the traditional realized power variation of a semimartingale in the literature and naturally appear in measuring the effect of the third moment of high frequency data. To illustrate the applicability of the theory developed in this paper, we have demonstrated the application of the theory to the statistical inference for the (co)skewness of high frequency financial returns. Moreover, we conduct some simulation experiments and empirical studies, which reveal the potential usefulness of the proposed method in empirical work.
\if0
Although our numerical illustrations have provided a promising result, they also suggest that the realized skewness estimator considered in this paper is somewhat noisy to measure the daily skewness of assets. This is due to the fact that we have to work with relatively low frequency data in practice to avoid market microstructure effects, which leads to the use of rather small samples of daily data. In fact, the recent work of \cite{LWL2013} proposes an estimator for the skewness of high frequency data in consideration of microstructure noise, which is a combination of the realized skewness estimator and the pre-averaging method of \cite{PV2009b}, and also provides empirical evidence of its advantage as the measure of the skewness of financial assets. Therefore, in future research it will be interesting to investigate the asymptotic distribution theory of such an estimator.
\fi
\fi

\section{Proofs}\label{section:proofs}

\subsection{Proof of Theorem \ref{thm2}}\label{proof:thm2}

First of all, a standard localization procedure, described in detail in Lemma 4.49 of \cite{JP2012}, for instance, allows us to assume that there are a positive constant $A$ and a nonnegative deterministic measurable function $\gamma$ on $E$ such that
\begin{equation}\label{SH}
\left.\begin{array}{l}
|b(\omega)_t|\leq A,\qquad|\sigma(\omega)_t|\leq A,\qquad|X(\omega)_t|\leq A,\qquad|X^c(\omega)_t|\leq A,\\
|\delta(\omega,t,z)|\leq\gamma(z)\leq A,\qquad
\int\gamma(z)^2\lambda(dz)\leq A.
\end{array}\right\}
\end{equation}

The strategy of the proof is the same as the one used in the proof of Theorem 5.1.2 from \cite{JP2012}, and we divide the proof into several steps. For the part corresponding to Steps 1--3 of \cite{JP2012}'s proof, we can adopt almost the same argument as the original one, hence it is just briefly sketched in Step 2. In the remainder steps we will need an argument which is somewhat different from theirs.

Throughout the discussions, for random variables $X$ and $Y$ which may depend on the parameters $n,m,i$, $X\lesssim Y$ means that there exists a (non-random) constant $K>0$ independent of $n,m,i$ such that $X\leq KY$ a.s.\vspace{2.5mm}

\noindent\textit{Step 1)} We begin with introducing some notations. For each $m\in\mathbb{N}$, set $A_m=\{z:\gamma(z)>1/m\}$. Noting that $\nu(A_m)<\infty$, we denote by $(S(m,j))_{j\geq1}$ the successive jump times of the Poisson process $(\mu((0,t]\times(A_m\setminus A_{m-1})))_{t\geq0}$. Let $(S_p)_{p\geq1}$ be a reordering of the double sequence $(S(m,j))$, and we denote by $\mathcal{P}_m$ the set of all indices $p$ such that $S_p=S(m',j)$ for some $j\geq1$ and some $m'\leq m$. In the light of Proposition 5.1.1 from \cite{JP2012}, we may assume that $(S_p)=(T_q)$ without loss of generality.

Set
\begin{equation*}%\label{notation}
\begin{array}{l}
b(m)_t=b_t-\int_{A_m\cap\{z:|\delta(t,z)|\leq1\}}\delta(t,z)\lambda(dz),\quad
C(m)_t=X_0+\int_0^tb(m)_sds+X_t^c,\\
X(m)_t=C(m)_t+\int_0^t\int_{A_m^c}\delta(s,z)(\mu-\nu)(ds,dz),\\
X'(m)_t=X_t-X(m)_t=\int_0^t\int_{A_m}\delta(s,z)\mu(ds,dz),\\
X''(m)_t=X(m)_t-X^c_t=X_0+\int_0^tb(m)_s ds+\int_0^t\int_{A_m^c}\delta(s,z)(\mu-\nu)(ds,dz),\\
Y(m)_t=X_t-\int_0^t\int_{A_m^c}\delta(s,z)(\mu-\nu)(ds,dz)=C(m)_t+X'(m)_t,
\end{array}
\end{equation*}
and denote by $\Omega_n(T,m)$ the set of all $\omega$ such that each interval $[0,T]\cap((i-1)\Delta_n,i\Delta_n]$ contains at most one jump of $X'(m)(\omega)$.
%Intuitively, $C(m)$ is the process without jumps, $X(m)$ is the process without ``large'' jumps, $X'(m)$ is the process consists of ``large'' jumps, $Y'(m)$ is the process consists of ``small'' jumps with a drift, and $Y(m)$ is the process without ``small'' jumps, respectively.
Note that the notations here are consistent with those from Eq.(5.1.10) of \cite{JP2012}.
Since $X'(m)$ has finitely many jumps on $[0,T]$, it holds that $P(\Omega_n(T,m))\to1$ as $n\to\infty$. We also set $i^n_p=\lceil S_p/\Delta_n\rceil$ so that $S_p$ is in $((i^n_p-1)\Delta_n,i^n_p\Delta_n]$. Here, for a real number $x$, $\lceil x\rceil$ denotes the minimum integer $l$ satisfying $l\geq x$. Then we define
\begin{align*}
R(n,p)=\frac{1}{\sqrt{\Delta_n}}\left(\Delta^n_{i^n_p}X-\Delta X_{S_p}\right),\qquad
\zeta^n_p=\frac{1}{\sqrt{\Delta_n}}\left(g(\Delta^n_{i^n_p}X)-g(\Delta X_{S_p})-g(\sqrt{\Delta_n}R(n,p))\right)
\end{align*}
and $Y^n_T(m)=\sum_{p\in\mathcal{P}_m:S_p\leq\Delta_n\lfloor T/\Delta_n\rfloor}\zeta^n_p.$
Moreover, for any semimartingale $S$, we set
\begin{align*}
\overline{\mathcal{V}}^{n}_T(S,g)=\frac{1}{\sqrt{\Delta_n}}\left(\mathcal{V}^n_T(S,g)-\sum_{0\leq s\leq T}g(\Delta S_s)\right),\qquad
\overline{Z}^n_T(S)=\frac{1}{\sqrt{\Delta_n}}\left(\text{RV}^n_T(S)-[S,S]_{\lfloor T/\Delta_n\rfloor\Delta_n}\right).
\end{align*}

\noindent\textit{Step 2)} 
\if0
First, a minor modification of the arguments in Step 2 of the proof of Theorem 5.1.2 from \cite{JP2012} yields
\[
\left(\overline{Z}^n_T(Y(m)),Y^n_T(m)\right)\to^\mathcal{S}(\sqrt{2IQ_T}U^0+\mathcal{Z}_T(X'(m),2),\overline{\mathcal{V}}_T(X'(m),g))
\]
as $n\to\infty$ for every $m\in\mathbb{N}$. 
\fi
First we fix $m$. With $p$ fixed, the sequence $R(n,p)$ is tight due to Proposition 4.4.10 of \cite{JP2012}. Therefore, we have $g(\sqrt{\Delta_n}R(n,p))/\sqrt{\Delta_n}\to^P0$ as $n\to\infty$ because $g(x)=O(x^2)$ as $x\to0$. On the other hand, repeated applications of the fundamental theorem of calculus yield
\begin{align*}
g(\Delta^n_{i_p}X)-g(\Delta X_{S_p})
=g'(\Delta X_{S_p})\sqrt{\Delta_n}R(n,p)
+\int_{0}^{\sqrt{\Delta_n}R(n,p)}\int_0^ug''(\Delta X_{S_p}+v)dvdu.
\end{align*}
Since $X$ is bounded and $g''$ is continuous, we have
\begin{align*}
\frac{1}{\sqrt{\Delta_n}}\int_{0}^{\sqrt{\Delta_n}R(n,p)}\int_0^ug''(\Delta X_{S_p}+v)dvdu\to^P0
\end{align*}
as $n\to\infty$. Consequently, we conclude that $\zeta^n_p-g'(\Delta X_{S_p})R(n,p)\to^P0$ as $n\to\infty$. On the other hand, an argument similar to the proof of Lemma 5.4.10 from \cite{JP2012} implies that 
$\left(\overline{Z}^n_T(C(m)),\left(R(n,p)\right)_{p\geq1}\right)
\to^{\mathcal{S}}\left(\sqrt{2IQ_T}U^0,\left(R_p\right)_{p\geq1}\right)$ 
as $n\to\infty$.  
Hence it holds that
\begin{align*}
\left(\overline{Z}^n_T(C(m)),\left(\Delta X_{S_p}R(n,p)\right)_{p\geq1},\left(\zeta^n_p\right)_{p\geq1}\right)
\to^{\mathcal{S}}\left(\sqrt{2IQ_T}U^0,\left(\Delta X_{S_p}R_p\right)_{p\geq1},\left(\zeta_p\right)_{p\geq1}\right)
\end{align*}
as $n\to\infty$, where $\zeta_p=g'(\Delta X_{S_p})R_p$. Since the set $\{S_p:p\in\mathcal{P}_m\}\cap[0,T]$ is finite, it follows that
\begin{align*}
\left(\overline{Z}^n_T(C(m)),\mathcal{Z}^n_T(m),Y^n(m)_T\right)
\to^{\mathcal{S}}\left(\sqrt{2IQ_T}U^0,\mathcal{Z}_T(X'(m),2),\overline{\mathcal{V}}_T(X'(m),g)\right)
\end{align*}
as $n\to\infty$, where $\mathcal{Z}^n_T(m)_T=2\sum_{p\in\mathcal{P}_m:S_p\leq\Delta_n\lfloor T/\Delta_n\rfloor}\Delta X_{S_p}R(n,p).$ 
Furthermore, the same argument as the last part of the proof of Lemma 5.4.10 from \cite{JP2012} implies that $\overline{Z}^n_T(C(m))+\mathcal{Z}^n_T(m)_T-\overline{Z}^n_T(Y(m))\to^P0$ as $n\to\infty$. Consequently, we conclude that
\[
\left(\overline{Z}^n_T(Y(m)),Y^n_T(m)\right)\to^\mathcal{S}(\sqrt{2IQ_T}U^0+\mathcal{Z}_T(X'(m),2),\overline{\mathcal{V}}_T(X'(m),g))
\]
as $n\to\infty$. 

Next we vary $m$. We can prove
\begin{equation*}%\label{Vmlimit}
\overline{\mathcal{V}}_T(X'(m),g)\to^P\overline{\mathcal{V}}_T(X,g)
\end{equation*}
as $m\to\infty$ by the same argument as the proof of Eq.(5.1.16) from \cite{JP2012} and
\begin{equation*}%\label{QVmlimit}
\left.\begin{array}{l}
\lim_{m\to\infty}\limsup_{n\to\infty}P\left(\left|\overline{Z}^n_T(Y(m))-\overline{Z}^n_T(X)\right|>\eta\right)\to0,\\
\mathcal{Z}_T(X'(m))\to^P\mathcal{Z}_T(X)\quad\mathrm{as}\quad m\to\infty
\end{array}\right\}
\end{equation*}
for any $\eta>0$ by the same argument as in the proof of Lemma 5.4.12 from \cite{JP2012} where our $Y(m)$ is denoted by $X(m)$.

Now, noting that $[X,X]_{\lfloor T/\Delta_n\rfloor\Delta_n}-[X,X]_{T}=o_P(\sqrt{\Delta_n})$ and the decomposition
\begin{equation*}
\overline{\mathcal{V}}^{n}_T(X,g)=\overline{\mathcal{V}}^{n}_T(X(m),g)+Y^n_T(m)
\end{equation*}
holding on the set $\Omega_n(T,m)$ as well as $\lim_{n\to\infty}P(\Omega_n(T,m))=1$ for every $m\in\mathbb{N}$, the proof of the theorem is completed once we show that
\begin{equation}\label{aim}
\lim_{m\to\infty}\limsup_{n\to\infty}P\left(\Omega_n(T,m)\cap\left\{\left|\overline{\mathcal{V}}^{n}_T(X(m),g)-\frac{1}{\sqrt{\Delta_n}}\mathcal{V}^{n}_T(X^c,g)\right|>\eta\right\}\right)=0
\end{equation}
for any $\eta>0$.

\noindent\textit{Step 3)} We begin by showing three inequalities used in the proof. The first and the second ones are elementary: if $\rho\in(0,2]$, the Lyapunov and Doob inequalities as well as $(\ref{SH})$ yield
\begin{align*}
E\left[\sup_{(i-1)\Delta_n\leq t\leq i\Delta_n}\left|\int_{(i-1)\Delta_n}^{t}\int_{A^c_m}\delta(s,z)(\mu-\nu)(ds,dz)\right|^\rho\right]
%\leq&\left\{4E\left[\int_{(i-1)\Delta_n}^{i\Delta_n}\int_{A^c_m}|\delta(s,z)|^2\nu(ds,dz)\right]\right\}^{\rho/2}
\leq\left(4\Delta_n\overline{\gamma}_m\right)^{\rho/2},
\end{align*}
where $\overline{\gamma}_m:=\int_{A_m^c}\gamma(z)^2\lambda(dz)$. Therefore, noting that
%$$\textstyle\left|\int_{A_m\cap\{z||\delta(t,z)|\leq1\}}\delta(t,z)\nu(dz)\right|
%\leq m\int\gamma(z)^2\nu(dz)\leq Am,$$
$\left|\int_{A_m\cap\{z||\delta(t,z)|\leq1\}}\delta(t,z)\nu(dz)\right|\leq Am,$
there exists a positive constant $K_\rho$ such that
\begin{equation}\label{jump.est}\textstyle
E\left[\sup_{(i-1)\Delta_n\leq t\leq i\Delta_n}\left|X''(m)_t-X''(m)_{(i-1)\Delta_n}\right|^\rho\right]\leq K_\rho\left\{\left(m\Delta_n\right)^\rho+\left(\Delta_n \overline{\gamma}_m\right)^{\rho/2}\right\}
\end{equation}
for every $i,n,m$.
On the other hand, for every $\rho\geq1$ there exists a constant $K'_\rho$ such that
\begin{equation}\label{cont.est}\textstyle
E\left[\sup_{(i-1)\Delta_n\leq t\leq i\Delta_n}\left|X^c_t-X^c_{(i-1)\Delta_n}\right|^\rho\right]\leq K'_\rho\Delta_n^{\rho/2}
\end{equation}
for every $i,n$, due to the Burkholder-Davis-Gundy inequality and $(\ref{SH})$.

Now we prove the third one. By using integration by parts repeatedly we obtain
\begin{align*}
\Delta^n_iX^{c}\left(\Delta^n_iX''(m)\right)^2
&{\textstyle=2\int_{(i-1)\Delta_n}^{i\Delta_n}(X^{c}_s-X^{c}_{(i-1)\Delta_n})(X''(m)_s-X''(m)_{(i-1)\Delta_n})dX''(m)_s}\\
&{\textstyle\quad+\int_{(i-1)\Delta_n}^{i\Delta_n}\int_{A^c_m}(X^{c}_s-X^{c}_{(i-1)\Delta_n})\delta(s,z)^2\mu(ds,dz)}\\
&{\textstyle\quad+\int_{(i-1)\Delta_n}^{i\Delta_n}(X''(m)_s-X''(m)_{(i-1)\Delta_n})^2dX^{c}_s}\\
&=:2\mathbb{I}^n_i+\mathbb{II}^n_i+\mathbb{III}^n_i.
\end{align*}
First consider $\mathbb{I}^n_i$. We decompose it as
\begin{align*}
\mathbb{I}^n_i
&{\textstyle=\int_{(i-1)\Delta_n}^{i\Delta_n}(X^{c}_s-X^{c}_{(i-1)\Delta_n})(X''(m)_s-X''(m)_{(i-1)\Delta_n})b(m)_s ds}\\
&{\textstyle\quad+\int_{(i-1)\Delta_n}^{i\Delta_n}\int_{A^c_m}(X^{c}_s-X^{c}_{(i-1)\Delta_n})(X''(m)_s-X''(m)_{(i-1)\Delta_n})\delta(s,z)(\mu-\nu)(ds,dz)}\\
&=:\mathbb{I}^{n,1}_i+\mathbb{I}^{n,2}_i.
\end{align*}
The Schwarz inequality and $(\ref{jump.est})$--$(\ref{cont.est})$ yield
\begin{align*}
E\left[\left|\mathbb{I}^{n,1}_i\right|\right]
&\leq Cm\int_{(i-1)\Delta_n}^{i\Delta_n}E\left[\left|(X^{c}_s-X^{c}_{(i-1)\Delta_n})(X''(m)_s-X''(m)_{(i-1)\Delta_n})\right|\right]ds\\
&\lesssim m\Delta_n^{3/2}\left(m\Delta_n+\sqrt{\overline{\gamma}_m\Delta_n}\right).
\end{align*}
On the other hand, since integration by parts implies that
\begin{align*}
&(X^{c}_s-X^{c}_{(i-1)\Delta_n})(X''(m)_s-X''(m)_{(i-1)\Delta_n})\\
=&\int_{(i-1)\Delta_n}^s(X^{c}_u-X^{c}_{(i-1)\Delta_n})dX''(m)_u+\int_{(i-1)\Delta_n}^s(X''(m)_u-X''(m)_{(i-1)\Delta_n})dX^{c}_u,
\end{align*}
the Doob inequality, $(\ref{SH})$ and $(\ref{jump.est})$--$(\ref{cont.est})$ imply that
\begin{align*}
E\left[\sup_{(i-1)\Delta_n\leq s\leq i\Delta_n}\left|(X^{c}_s-X^{c}_{(i-1)\Delta_n})(X''(m)_s-X''(m)_{(i-1)\Delta_n})\right|^2\right]
\lesssim \Delta_n^2\left(m^2\Delta_n+\overline{\gamma}_m\right),
\end{align*}
hence the Lyapunov and Doob inequalities yield
\begin{align*}
E\left[\left|\mathbb{I}^{n,2}_i\right|\right]
&\leq \left\{4\overline{\gamma}_m\int_{(i-1)\Delta_n}^{i\Delta_n}E\left[\left|(X^{c}_s-X^{c}_{(i-1)\Delta_n})(X''(m)_s-X''(m)_{(i-1)\Delta_n})\right|^2\right]ds\right\}^{1/2}\\
&\lesssim \Delta_n^{3/2}\left(m\sqrt{\overline{\gamma}_m\Delta_n}+\overline{\gamma}_m\right).
\end{align*}
Consequently, it holds that $E\left[|\mathbb{I}^n_i|\right]\lesssim \Delta_n^{3/2}\left(m^2\Delta_n+m\sqrt{\overline{\gamma}_m\Delta_n}+\overline{\gamma}_m\right)$. On the other hand, since $\nu$ is the compensator of $\mu$, we have
\begin{align*}
E\left[|\mathbb{II}^n_i|\right]
\leq E\left[\int_{(i-1)\Delta_n}^{i\Delta_n}\int_{A^c_m}\left|X^{c}_s-X^{c}_{(i-1)\Delta_n}\right|\delta(s,z)^2\nu(ds,dz)\right]
\lesssim \Delta_n^{3/2}\overline{\gamma}_m
\end{align*}
by $(\ref{SH})$ and $(\ref{cont.est})$, whereas the Davis inequality, $(\ref{SH})$ and $(\ref{jump.est})$ imply that
\begin{align*}
E\left[|\mathbb{III}^n_i|\right]
&{\textstyle\lesssim E\left[\left\{\int_{(i-1)\Delta_n}^{i\Delta_n}(X''(m)_s-X''(m)_{(i-1)\Delta_n})^4\sigma^2_sds\right\}^{1/2}\right]}\\
&\lesssim {\textstyle \Delta_n^{1/2}E\left[\sup_{(i-1)\Delta_n\leq s\leq i\Delta_n}(X''(m)_s-X''(m)_{(i-1)\Delta_n})^2\right]}
\lesssim \Delta_n^{3/2}\left(m^2\Delta_n+\overline{\gamma}_m\right).
\end{align*}
After all, there exists a positive constant $K''$ such that
\begin{equation}\label{IBP}
E\left[\left|\Delta^n_iX^{c}\left(\Delta^n_iX''(m)\right)^2\right|\right]\leq K''\Delta_n^{3/2}\left(m^2\Delta_n+m\sqrt{\overline{\gamma}_m\Delta_n}+\overline{\gamma}_m\right)
\end{equation}
for every $i,n,m$.\vspace{2.5mm}

\noindent\textit{Step 4)} Setting $k(x,y)=g(x+y)-g(x)-g(y)$, we have
\begin{align*}\textstyle
\overline{\mathcal{V}}^{n}_T(X(m),g)-\frac{1}{\sqrt{\Delta_n}}\mathcal{V}^{n}_T(X^c,g)
=\frac{1}{\sqrt{\Delta_n}}\sum_{i=1}^{\lfloor T/\Delta_n\rfloor}k(\Delta^n_iX^c,\Delta^n_iX''(m))
+\overline{\mathcal{V}}^{n}_T(X''(m),g)
\end{align*}
because $X(m)=X^c+X''(m)$. Therefore, the proof is completed once we verify the following equations for any $\eta>0$:
\begin{align}
&\lim_{m\to\infty}\limsup_{n\to\infty}P\left(\Omega_n(T,m)\cap\left\{\frac{1}{\sqrt{\Delta_n}}\left|\sum_{i=1}^{\lfloor T/\Delta_n\rfloor}k(\Delta^n_iX^c,\Delta^n_iX''(m))\right|>\eta\right\}\right)=0,\label{aim1.5}\\
&\lim_{m\to\infty}\limsup_{n\to\infty}P\left(\Omega_n(T,m)\cap\left\{\left|\overline{\mathcal{V}}^{n}_T(X''(m),g)\right|>\eta\right\}\right)=0.\label{aim2o}
\end{align}
This step is devoted to the proof of $(\ref{aim1.5})$. First, since $g(0)=g'(0)=0$ and $g''(x)=O(|x|)$ as $x\to0$, there exists a positive constant $\alpha$ such that
\begin{equation}\label{g.est}
|x|\leq 6A \Rightarrow |g(x)|\leq \alpha|x|^3,\quad |g'(x)|\leq \alpha|x|^2,\quad |g''(x)|\leq \alpha|x|.
\end{equation}
Next, using the fundamental theorem of calculus repeatedly, we have
\begin{align*}
k(x,y)=\int_0^y\int_0^xg''(v+u)dvdu,
\end{align*}
hence $(\ref{g.est})$ yields
\begin{equation}\label{k.est}
|x|+|y|\leq 6A \Rightarrow |k(x,y)|\leq \alpha\left(|x|^2|y|+|x||y|^2\right).
\end{equation}
Let us recall Since $|\Delta^n_iX^c|\leq 2A$ and $|\Delta^n_iX(m)|\leq|\Delta^n_iX|+|\Delta^n_iX'(m)|\leq 3A$ on $\Omega_n(T,m)$ due to $(\ref{SH})$, we obtain
\begin{align*}
&E\left[\left|\frac{1}{\sqrt{\Delta_n}}\sum_{i=1}^{\lfloor T/\Delta_n\rfloor}k(\Delta^n_iX^c,\Delta^n_iX''(m))\right|1_{\Omega_n(T,m)}\right]\\
%\leq&C\frac{1}{\sqrt{\Delta_n}}\sum_{i=1}^{\lfloor T/\Delta_n\rfloor}E\left[\left\{\left|\Delta^n_iX^c\right|^2\left|\Delta^n_iX''(m)\right|+\left|\Delta^n_iX^c\right|\left|\Delta^n_iX''(m)\right|^2\right\}\right]\\
\lesssim &\Delta_n^{-3/2}\left\{\Delta_n\left(m\Delta_n+\sqrt{\overline{\gamma}_m\Delta_n}\right)+\Delta_n^{3/2}\left(m^2\Delta_n+m\sqrt{\overline{\gamma}_m\Delta_n}+\overline{\gamma}_m\right)\right\}
\end{align*}
by $(\ref{k.est})$, the Schwarz inequality and $(\ref{jump.est})$--$(\ref{IBP})$. Therefore, noting that $\overline{\gamma}_m\to0$ as $m\to\infty$ because of $(\ref{SH})$ and the dominated convergence theorem and that  $\Delta X(m)_s=\Delta X''(m)_s$ as well as $\Delta X^c_s=0$ for all $s\geq0$, $(\ref{aim1.5})$ has been shown.

\noindent\textit{Step 5)} Now we prove $(\ref{aim2o})$ and complete the proof of the theorem. First, set $\phi(x,y)=k(x,y)-g'(x)y$. If $|y|>|x|$, $(\ref{g.est})$ and $(\ref{k.est})$ yield
\begin{equation}\label{phi.est}
|x|\leq 5A, |y|\leq A\Rightarrow|\phi(x,y)|\leq 3\alpha|x||y|^2,
\end{equation}
whereas repeated applications of the fundamental theorem of calculus imply that $\phi(x,y)=\int_0^y\int_0^ug''(x+v)dvdu-g(y)$, hence, if $|y|\leq |x|$, by $(\ref{g.est})$ we have
\begin{align*}
|x|\leq 5A, |y|\leq A\Rightarrow|\phi(x,y)|\leq\alpha(|x|+|y|)|y|^2+\alpha|y|^3\leq 3\alpha|x||y|^2,
\end{align*}
and thus $(\ref{phi.est})$ holds true.

Next, for any $i$, an application of It\^o's formula to the process $\Xi(m,i)_t=\int_0^t1_{\{s>(i-1)\Delta_n\}}dX''(m)_s$ and the function $g$ yields
\begin{align*}
g(\Xi(m,i)_t)&\textstyle=\int_{(i-1)\Delta_n}^tg'(\Xi(m,i)_{s-})dX''(m)_s\\
&\textstyle\quad+\sum_{(i-1)\Delta_n< s\leq t}\left\{\phi(\Xi(m,i)_{s-},\Delta X(m)_s)+g(\Delta X(m)_s)\right\}
\end{align*}
for any $t>(i-1)\Delta_n$. Therefore, noting that $\sum_{(i-1)\Delta_n< s\leq t}g(\Delta X(m)_s)$ is well-defined by assumption, for any $t>(i-1)\Delta_n$ we have
\begin{align}
&\hphantom{=}g(\Xi(m,i)_t)-\sum_{(i-1)\Delta_n< s\leq t}g(\Delta X(m)_s)\nonumber\\
&\int_{(i-1)\Delta_n}^tg'(\Xi(m,i)_{s-})dX''(m)_s
+\int_{(i-1)\Delta_n}^t\int_{A_m^c}\phi(\Xi(m,i)_{s-},\delta(s,z))\mu(ds,dz)\nonumber\\
&=\int_{(i-1)\Delta_n}^ta(n,m,i)_udu+\int_{(i-1)\Delta_n}^t\int_{A_m^c}k(\Xi(m,i)_{s-},\delta(s,z))(\mu-\nu)(ds,dz)\nonumber\\
&=:A(n,m,i)_t+M(n,m,i)_t,\label{JP5.1.19}
\end{align}
where
$a(n,m,i)_u=g'(\Xi(m,i)_{u})b(m)_u+\int_{A^c_m}\phi(\Xi(m,i)_{u},\delta(u,z))\lambda(dz).$
Note that $A(n,m,i)$ and $M(n,m,i)$ are well-defined due to $(\ref{SH})$ and $(\ref{k.est})$--$(\ref{phi.est})$.

Let us set $T(n,m,i)=\inf\{s>(i-1)\Delta_n:|\Xi(m,i)_s|>5A\}$. On the set $\Omega_n(T,m)$ we have $|\Xi(m,i)_s|\leq5A$ for all $s\leq T$ and $i\leq\lfloor T/\Delta_n\rfloor$ due to the decomposition $X''(m)=X-X^c-X'(m)$ and $(\ref{SH})$, hence $T(n,m,i)>i\Delta_n$. 
\if0
Thus, in view of $(\ref{JP5.1.19})$, and on $\Omega_n(T,m)$, the variable $\Delta_n^{-1/2}|\overline{\mathcal{V}}^{n}_T(X''(m),g)|$ is dominated by
\[
\frac{1}{\sqrt{\Delta_n}}\sum_{i=1}^{\lfloor T/\Delta_n\rfloor}|A(n,m,i)_{(i\Delta_n)\wedge T(n,m,i)}|
+\frac{1}{\sqrt{\Delta_n}}\left|\sum_{i=1}^{\lfloor T/\Delta_n\rfloor}M(n,m,i)_{(i\Delta_n)\wedge T(n,m,i)}\right|.
\]
\fi
Thus, in view of $(\ref{JP5.1.19})$, we have
\begin{align}
&P\left(\Omega_n(T,m)\cap\left\{\left|\overline{\mathcal{V}}^{n}_T(X''(m),g)\right|>\eta\right\}\right)\nonumber\\
&\leq P\left(\frac{1}{\sqrt{\Delta_n}}\sum_{i=1}^{\lfloor T/\Delta_n\rfloor}|A(n,m,i)_{(i\Delta_n)\wedge T(n,m,i)}|>\frac{\eta}{2}\right)
+P\left(\frac{1}{\sqrt{\Delta_n}}\left|\sum_{i=1}^{\lfloor T/\Delta_n\rfloor}M(n,m,i)_{(i\Delta_n)\wedge T(n,m,i)}\right|>\frac{\eta}{2}\right).\label{tnmi}
\end{align}
Therefore, in order to prove $(\ref{aim2o})$ it suffices to show that
\begin{equation}\label{aim3o}
\left.\begin{array}{l}
\lim_{m\to\infty}\limsup_{n\to\infty}\frac{1}{\sqrt{\Delta_n}}E\left[\sum_{i=1}^{\lfloor T/\Delta_n\rfloor}|A(n,m,i)_{(i\Delta_n)\wedge T(n,m,i)}|\right]=0,\\
\lim_{m\to\infty}\limsup_{n\to\infty}\frac{1}{\Delta_n}E\left[\sum_{i=1}^{\lfloor T/\Delta_n\rfloor}\langle M(n,m,i)\rangle_{(i\Delta_n)\wedge T(n,m,i)}\right]=0,
\end{array}\right\}
\end{equation}
where we apply the Lenglart inequality to derive the convergence of the second term in the right side of \eqref{tnmi} from the second convergence of \eqref{aim3o} (for this application we need to introduce the stopping time $T(n,m,i)$, which enables us to drop the indicator $1_{\Omega_n(T,m)}$). 
Recall that $|b(m)|\leq(1+m)A$ and $|\delta(s,z)|\leq\gamma(z)\leq A$ due to $(\ref{SH})$, so we have for $(i-1)\Delta_n\leq u<T(n,m,i)$ (then $|\Xi(m,i)_u|\leq 5A$):
\begin{align*}
&\left|a(n,m,i)_u\right|\lesssim m|\Xi(m,i)_u|^2+\overline{\gamma}_m|\Xi(m,i)_u|,\\
&\textstyle\int_{A_m^c}k(\Xi(m,i)_{u},\delta(u,z))^2\lambda(dz)\lesssim \overline{\gamma}_m|\Xi(m,i)_u|^2
\end{align*}
by $(\ref{SH})$, $(\ref{g.est})$--$(\ref{k.est})$ and $(\ref{phi.est})$. Combining these estimates with $(\ref{jump.est})$ as well as the fact that $\overline{\gamma}_m\to0$ as $m\to\infty$, we conclude that $(\ref{aim3o})$ holds true.\hfill$\Box$

\subsection{Proof of Proposition \ref{prop:counter-ex}}\label{proof:counter-ex}

We can easily check that $g_a$ is a $C^2$ function and we have
\[
g_a'(x)=x|x|\left\{3\sin(2a\log|x|)+2a\cos(2a\log|x|)\right\} 
\]
and
\[
g_a''(x)=|x|\left\{6\sin(2a\log|x|)+10a\cos(2a\log|x|)
-4a^2\sin(2a\log|x|)\right\}
\]
for any $x\in\mathbb{R}$. Hence claim (a) holds true.  

Next we prove claim (b). 
In the following we denote by $\mathfrak{N}$ the standard normal density. 
First we show that there is a real number $a$ such that 
\begin{equation}\label{nonzero}
\int_0^\infty x^3\sin(2a\log x)\mathfrak{N}(x)dx\neq0.
\end{equation}
In fact, substituting $y=\log x$, we have
\[
\int_0^\infty x^3\sin(2a\log x)\mathfrak{N}(x)dx
=\int_{-\infty}^\infty e^{4y}\mathfrak{N}(e^y)\sin(2ay)dy.
\]
Since the function $\mathbb{R}\ni y\mapsto e^{4y}\mathfrak{N}(e^y)\in\mathbb{R}$ is square integrable and not even, the imaginary part of its Fourier transform is not identical to zero. Hence \eqref{nonzero} holds true for some $a\in\mathbb{R}$. 

Now we show that the variables $\mathcal{V}^n_T(X^c,g_a)/\sqrt{\Delta_n}$ do not converge in law with $\Delta_n=\exp(-n\pi/a)$ if $a$ satisfies \eqref{nonzero} (note that such an $a$ must not be zero). 
To obtain a contradiction, suppose that the variables $\mathcal{V}^n_T(X^c,g_a)/\sqrt{\Delta_n}$ converge in law to some random variable $Z$ as $n\to\infty$. 
Since we have
\begin{align*}
\variance\left[\frac{1}{\sqrt{\Delta_n}}\mathcal{V}^n_T(X^c,g_a)\right]
%&=\frac{1}{\Delta_n}\variance\left[\sum_{i=1}^{\lfloor T/\Delta_n\rfloor}g_a(\Delta^n_iX^c)\right]
=\frac{1}{\Delta_n}\sum_{i=1}^{\lfloor T/\Delta_n\rfloor}\variance\left[g_a(\Delta^n_iX^c)\right]
\leq \frac{1}{\Delta_n}\sum_{i=1}^{\lfloor T/\Delta_n\rfloor}E\left[(\Delta^n_iX^c)^6\right]
=O(\Delta_n)
\end{align*}
and
\begin{align*}
\frac{1}{\sqrt{\Delta_n}}|E[\mathcal{V}^n_T(X^c,g_a)]|\leq
\frac{1}{\sqrt{\Delta_n}}\sum_{i=1}^{\lfloor T/\Delta_n\rfloor}E\left[|\Delta^n_iX^c|^3\right]
=O(1),
\end{align*}
we obtain
\[
\sup_{n\in\mathbb{N}}E\left[\left|\frac{1}{\sqrt{\Delta_n}}\mathcal{V}^n_T(X^c,g_a)\right|^2\right]<\infty.
\]
Therefore, the variables $\mathcal{V}^n_T(X^c,g_a)/\sqrt{\Delta_n}$ are uniformly integrable, and thus Theorem 3.5 of \cite{Billingsley1999} implies that $Z$ is integrable and 
\begin{equation}\label{contradiction}
E\left[\frac{1}{\sqrt{\Delta_n}}\mathcal{V}^n_T(X^c,g_a)\right]\to E[Z]
\end{equation}
as $n\to\infty$. 
In the meantime, we have
\begin{align*}
E\left[\frac{1}{\sqrt{\Delta_n}}\mathcal{V}^n_T(X^c,g_a)\right]
&=\frac{1}{\sqrt{\Delta_n}}\sum_{i=1}^{\lfloor T/\Delta_n\rfloor}E\left[g_a(\Delta^n_iX^c)\right]
=\frac{2T}{\Delta_n^{3/2}}\int_0^\infty g_a(\sqrt{\Delta_n}x)\mathfrak{N}(x)dx+O(\Delta_n)\\
&=2T\int_0^\infty x^3\sin\left(2a\log(\sqrt{\Delta_n}x)\right)\mathfrak{N}(x)dx+O(\Delta_n)
\end{align*}
as $n\to\infty$. Hence, in view of \eqref{contradiction}, the sequence
\[
c_n:=\int_0^\infty x^3\sin\left(2a\log(\sqrt{\Delta_n}x)\right)\mathfrak{N}(x)dx,\qquad n=1,2,\dots
\]
converges as $n\to\infty$. Using the identity
\begin{align*}
\sin\left(2a\log(\sqrt{\Delta_n}x)\right)
%&=\sin\left(a\log\Delta_n+2a\log x\right)\\
&=\sin(a\log\Delta_n)\cos\left(2a\log x\right)
+\cos(a\log\Delta_n)\sin\left(2a\log x\right),
\end{align*}
we can rewrite $c_n$ as
\begin{align*}
c_n
&=\sin(a\log\Delta_n)\int_0^\infty x^3\cos\left(2a\log x\right)\mathfrak{N}(x)dx
+\cos(a\log\Delta_n)\int_0^\infty x^3\sin\left(2a\log x\right)\mathfrak{N}(x)dx.
\end{align*}
Since $\Delta_n=\exp(-n\pi/a)$, we obtain
\[
c_n=(-1)^n\int_0^\infty x^3\sin\left(2a\log x\right)\mathfrak{N}(x)dx.
\]
Therefore, the sequence $c_n$ does not converge due to \eqref{nonzero}, a contradiction.\hfill$\Box$

\subsection{Proof of Theorem \ref{mainthm}}\label{proof:mainthm}

\subsubsection{Localization}

As in Section \ref{proof:thm2}, a standard localization argument allows us to replace the assumptions [A2]--[A3] by the following strengthened versions:
%\begin{enumerate}
\begin{enumerate}[label={\normalfont[SA\arabic*]},start=2]

%\item[{[SA2]}]
\item \label{hypo:SA2}
We have \ref{hypo:A2}, and the processes $X_t$, $b_t$, $\sigma_t$, $\widetilde{b}_t$ and $\widetilde{\sigma}_t$ are bounded. Moreover, there are a constant $\Lambda$ and a non-negative bounded function $\gamma$ on $E$ such that $\int\gamma(z)^2\lambda(\mathrm{d}z)<\infty$ and $|\delta(\omega,t,z)|\vee|\widetilde{\delta}(\omega,t,z)|\leq\gamma(z)$ and
\begin{align*}
&E\left[|b_{t_1}-b_{t_2}|^2|\mathcal{F}_{t_1\wedge t_2}\right]
\leq \Lambda E\left[|t_1-t_2|^\varpi|\mathcal{F}_{t_1\wedge t_2}\right],\\
&E\left[|\delta(t_1,z)-\delta(t_2,z)|^2|\mathcal{F}_{t_1\wedge t_2}\right]
\leq \Lambda\gamma(z)^2E\left[|t_1-t_2|^\varpi|\mathcal{F}_{t_1\wedge t_2}\right]
\end{align*}
for any bounded $(\mathcal{F}_t)$-stopping times $t_1$ and $t_2$.

%\item[{[SA3]}]
\item \label{hypo:SA3}
The process $\int u^6 Q_t(\mathrm{d}z)$ is bounded and there is a constant $\Lambda'$ such that
\[
E\left[|\alpha_{t_1}-\alpha_{t_2}|^2|\mathcal{F}_{t_1\wedge t_2}\right]
\leq \Lambda' E\left[|t_1-t_2|^\varpi|\mathcal{F}_{t_1\wedge t_2}\right]
\]
for any bounded $(\mathcal{F}_t)$-stopping times $t_1$ and $t_2$. Moreover, $\alpha_t$ is c\`adl\`ag.

\end{enumerate}

In the following we fix a constant $\xi\in(0,1)$ such that
\begin{equation}\label{est.xi}
\xi>\frac{11}{12}\vee\frac{2+\varpi}{2(1+\varpi)},
\end{equation}
and we set $\bar{r}_n=\Delta_n^{\xi}$. By a similar argument to Section 6.1.1 of \cite{Koike2015jclt}, we can further replace the assumption [A1] by the following strengthened version:
%\begin{enumerate}[{[{SA1}]}]
\begin{enumerate}[label={\normalfont[SA1]}]

\item \label{hypo:SA1} We have \ref{hypo:A1}, and for every $n$ it holds that
\begin{equation}\label{SA4}
\sup_{i\geq0}(t^n_i-t^n_{i-1})\leq\bar{r}_n.
\end{equation}

\end{enumerate}

\subsubsection{Notation and estimates}
We use the same notation as in Section \ref{proof:thm2} with the following change for the definition of the set $\Omega_n(T,m)$: For $m,n\in\mathbb{N}$, we denote by $\Omega_n(T,m)$ the set on which $k_n-1\leq N^n_{S_p-}\leq N^n_T-k_n$ for all $p\in\mathcal{P}_m$ such that $S_p\leq T$ and $|S_{p_1}-S_{p_2}|>k_n\bar{r}_n$ for any $p_1,p_2\in\mathcal{P}_m$ such that $p_1\neq p_2$ and $S_{p_1},S_{p_2}<\infty$. We have $\lim_nP(\Omega_n(T,m))=1$.
\if0
Set
\begin{equation*}
\begin{array}{l}
b(m)_t=b_t-\int_{A_m\cap\{z:|\delta(t,z)|\leq1\}}\delta(t,z)\lambda(dz),\quad
B(m)_t=\int_0^tb(m)_sds,\quad
C(m)_t=X_0+B(m)_t+X_t^c,\\
Z(m)_t=\int_0^t\int_{A_m^c}\delta(s,z)(\mu-\nu)(ds,dz),\quad
X(m)_t=C(m)_t+Z(m)_t,\\
X'(m)_t=X_t-X(m)_t=\int_0^t\int_{A_m}\delta(s,z)\mu(ds,dz),\\
%X''(m)_t=X(m)_t-X^c_t=X_0+\int_0^tb(m)_s ds+\int_0^t\int_{A_m^c}\delta(s,z)(\mu-\nu)(ds,dz),\\
Y(m)_t=X_t-\int_0^t\int_{A_m^c}\delta(s,z)(\mu-\nu)(ds,dz)=C(m)_t+X'(m)_t.
\end{array}
\end{equation*}
Intuitively, $C(m)$ is the process without jumps, $X(m)$ is the process without ``large'' jumps, $X'(m)$ is the process consists of ``large'' jumps, $Y'(m)$ is the process consists of ``small'' jumps with a drift, and $Y(m)$ is the process without ``small'' jumps, respectively. Note also that the notations here are consistent with those from Eq.(5.1.10) of \cite{JP2012}.
\fi
We additionally define the processes $B(m)$ and $Z(m)$ by $B(m)_t=\int_0^tb(m)_sds$ and $Z(m)_t=\int_0^t\int_{A_m^c}\delta(s,z)(\mu-\nu)(ds,dz)$, respectively.  
We also define the $\mathbb{R}^2\otimes\mathbb{R}^2$-valued variable $\overline{\Gamma}(m)_T$ by
\begin{align*}
\overline{\Gamma}(m)^{11}_T&=\frac{4}{\psi_2^{2}}\sum_{p\in\mathcal{P}_m,S_p\leq T}(\Delta X_{S_p})^2\left\{\Phi_{22}\theta\left(\sigma_{S_p}^2G_{S_p}+\sigma_{s-}^2G_{S_p-}\right)+\frac{\Phi_{12}}{\theta}\left(\alpha_s+\alpha_{s-}\right)\right\},\\
\overline{\Gamma}(m)^{12}_T&=\frac{6}{\psi_2\psi_3}\sum_{p\in\mathcal{P}_m,S_p\leq T}(\Delta X_{S_p})^3\left\{\theta\left(\Phi_{23+}\sigma_{S_p}^2G_{S_p}+\Phi_{23-}\sigma_{S_p-}^2G_{S_p-}\right)+\theta^{-1}\left(\Phi'_{23+}\alpha_{S_p}+\Phi'_{23-}\alpha_{S_p-}\right)\right\},\\
\overline{\Gamma}(m)_T^{22}&=\frac{9}{\psi_3^{2}}\sum_{p\in\mathcal{P}_m,S_p\leq T}(\Delta X_{S_p})^4\left\{\theta\left(\Phi_{3+}\sigma_{S_p}^2G_{S_p}+\Phi_{3-}\sigma_{S_p-}^2G_{S_p-}\right)+\theta^{-1}\left(\Phi'_{3+}\alpha_{S_p}+\Phi'_{3-}\alpha_{S_p-}\right)\right\}.
\end{align*}

We set $g^n_p=g(p/k_n)$ for $p=0,1,\dots,k_n$ and $\Delta(g)^n_p=g^n_{p+1}-g^n_p$ for $p=0,1,\dots,k_n-1$.
We also set $I_i=[t^n_{i-1},t^n_i)$ and $\overline{I}_i=[t^n_{i-1},t^n_{i+k_n-1})$ for $i=0,1,\dots$.

For every $i\geq0$ we define the process $\overline{g}^n_i$ by $\overline{g}^n_i(s)=\sum_{p=1}^{k_n-1}g^n_p1_{I_{i+p}}(s)$. For any semimartingale $V$, we define the process $\overline{V}_{i,t}$ by $\overline{V}_{i,t}=\int_0^t\overline{g}^n_i(s-)dV_s$. Note that $\overline{V}_{i}=\overline{V}_{i,t^n_{i+k_n-1}}$.

Recall that, for random variables $X$ and $Y$ which may depend on the parameters $n,m,i$, $X\lesssim Y$ means that there exists a (non-random) constant $K>0$ independent of $n,m,i$ such that $X\leq KY$ a.s. In addition, if $K$ possibly depends on $m$, we write $X\lesssim_m Y$ instead.

\ref{hypo:SA2} and \eqref{SA4} yield
\begin{equation}\label{moment:jump}
E\left[\sup_{s\in\overline{I}_i}\left|\overline{X'(m)}_{i,s}\right||\mathcal{F}_{t^n_{i-1}}\right]
\lesssim mE\left[|\overline{I}_i||\mathcal{F}_{t^n_{i-1}}\right]\lesssim mk_n\bar{r}_n
\end{equation}
and
\begin{equation}\label{moment:drift}
\left|\overline{B(m)}_i\right|
\lesssim m\left|\overline{I}_i\right|\lesssim mk_n\bar{r}_n.
\end{equation}
Here, $|\cdot|$ denotes the Lebesgue measure. The BDG inequality, \ref{hypo:SA2} and \eqref{SA4} yield
\begin{equation}\label{moment:C}
E\left[\sup_{s\in\overline{I}_i}\left|\overline{X}^c_{i,s}\right|^r|\mathcal{F}_{t^n_{i-1}}\right]
\lesssim E\left[|\overline{I}_i|^{r/2}|\mathcal{F}_{t^n_{i-1}}\right]\lesssim (k_n\bar{r}_n)^{r/2}
\qquad\text{for any }r>0
\end{equation}
and
\begin{equation}\label{moment:Z}
E\left[\sup_{s\in\overline{I}_i}\left|\overline{Z(m)}_{i,s}\right|^2|\mathcal{F}_{t^n_{i-1}}\right]
\lesssim \overline{\gamma}_mE\left[|\overline{I}_i||\mathcal{F}_{t^n_{i-1}}\right]\lesssim \overline{\gamma}_mk_n\bar{r}_n,
\end{equation}
where $\overline{\gamma}_m=\int_{A_m^c}\gamma(z)^2\lambda(dz)$. Note that $\overline{\gamma}_m\to0$ as $m\to\infty$ by the dominated convergence theorem. The BDG inequality and \ref{hypo:SA3} yield
\begin{equation}\label{moment:noise}
E\left[\left|\overline{\epsilon}_i\right|^r|\mathcal{F}\right]
+E\left[\left|\overline{\epsilon}_i\right|^r|\mathcal{F}^n_{t^n_{i-1}}\right]
\leq K_rk_n^{-r/2}\qquad\text{for any }r\in[2,6].
\end{equation}

Finally, Lemma 6.1 of \cite{Koike2015} implies that
\begin{equation}\label{C3}
N^n_T=O_p(\Delta_n^{-1})
\end{equation}
as $n\to\infty$ for every $t>0$.

\subsubsection{Main body}

For each $m\in\mathbb{N}$, we consider the following decomposition of $\Delta_n^{-1/4}\left(\PCV^n_T-\sum_{0\leq s\leq T}(\Delta X_s)^3\right)$:
\begin{align*}
&\Delta_n^{-1/4}\left(\PCV^n_T-\sum_{0\leq s\leq T}(\Delta X_s)^3\right)\\
&=\frac{\Delta_n^{-1/4}}{\psi_3k_n}\sum_{i=0}^{N^n_T-k_n+1}\left\{\left(\overline{X(m)}_i+\overline{\epsilon}_i\right)^3-\left(\overline{X(m)}_i\right)^3\right\}
+\Delta_n^{-1/4}\left\{\frac{1}{\psi_3k_n}\sum_{i=0}^{N^n_T-k_n+1}\left(\overline{X(m)}_i\right)^3-\sum_{0\leq s\leq T}(\Delta X(m)_s)^3\right\}\\
&\qquad+3\frac{\Delta_n^{-1/4}}{\psi_3k_n}\sum_{i=0}^{N^n_T-k_n+1}\left(\overline{X(m)}_i+\overline{\epsilon}_i\right)^2\overline{X'(m)}_i
+3\frac{\Delta_n^{-1/4}}{\psi_3k_n}\sum_{i=0}^{N^n_T-k_n+1}\left(\overline{C(m)}_i+\overline{\epsilon}_i\right)\left(\overline{X'(m)}_i\right)^2\\
&\qquad+3\frac{\Delta_n^{-1/4}}{\psi_3k_n}\sum_{i=0}^{N^n_T-k_n+1}\overline{Z(m)}_i\left(\overline{X'(m)}_i\right)^2
+\Delta_n^{-1/4}\left\{\frac{1}{\psi_3k_n}\sum_{i=0}^{N^n_T-k_n+1}\left(\overline{X'(m)}_i\right)^3-\sum_{0\leq s\leq T}(\Delta X'(m)_s)^3\right\}\\
&=:\mathbb{I}_n(m)+\mathbb{II}_n(m)+\mathbb{III}_n(m)+\mathbb{IV}_n(m)+\mathbb{V}_n(m)+\mathbb{VI}_n(m).
\end{align*}
Since we have
\[
\lim_{m\to\infty}\limsup_{n\to\infty}P_n\left(\Delta_n^{-1/4}\left|\left(\PRV^n_T-[X,X]_T\right)-\left(\PRV(m)^n_T-[Y(m),Y(m)]_T\right)\right|>\eta\right)=0
\]
for any $\eta>0$ by Proposition 6.3 from \cite{Koike2015jclt}, where
\[
\PRV(m)^n_T=\frac{1}{\psi_2k_n}\sum_{i=0}^{N^n_T-k_n+1}\left(\overline{Y(m)}_i+\overline{\epsilon}_i\right)^2-\frac{\psi_1}{2\psi_2k_n}\sum_{i=1}^{N^n_T}(Y_{t^n_i}-Y_{t^n_{i-1}})^2,
\]
and we obviously have $\overline{\Gamma}(m)_T\to^P\overline{\Gamma}_T$ as $m\to\infty$, the proof is completed once we show the following convergences for any $\eta>0$ due to Proposition 2.2.4 of \cite{JP2012}:
\begin{align}
&\lim_{m\to\infty}\limsup_{n\to\infty}P_n\left(\left|\mathbb{I}_n(m)\right|>\eta\right)=0,\label{aim1}\\
&\lim_{m\to\infty}\limsup_{n\to\infty}P_n\left(\left|\mathbb{II}_n(m)\right|>\eta\right)=0,\label{aim2}\\
&\mathbb{III}_n(m)\to^P0\qquad\text{for any }m\in\mathbb{N},\label{aim3}\\
&\left(\Delta_n^{-1/4}\left(\PRV(m)^n_T-[Y(m),Y(m)]_T\right),\mathbb{IV}_n(m)\right)^*
\to^\mathcal{S}\Gamma(m)_T^{1/2}\zeta,\label{aim4}\\
&\lim_{m\to\infty}\limsup_{n\to\infty}P_n\left(\left|\mathbb{V}_n(m)\right|>\eta\right)=0,\label{aim5}\\
&\mathbb{VI}_n(m)\to^P0\qquad\text{for any }m\in\mathbb{N},\label{aim6}
\end{align}
where $\zeta$ is a bivariate standard normal variable which is defined on an extension of $\mathcal{B}$ and independent of $\mathcal{F}$, and
\[
\Gamma(m)_T=
\left[
\begin{array}{cc}
\Gamma^c_T+\overline{\Gamma}(m)_T^{11}  & \overline{\Gamma}(m)_T^{12}  \\
\overline{\Gamma}(m)_T^{12}  & \overline{\Gamma}(m)_T^{22}
\end{array}
\right].
\]

% \section{aim6}
\begin{proof}[Proof of \eqref{aim6}]
On the set $\Omega_n(T,m)$ we have
\begin{align*}
\mathbb{VI}_n(m)
=\frac{\Delta_n^{-1/4}}{\psi_3}\sum_{p\in\mathcal{P}_m:S_p\leq T}\left\{\left(\frac{1}{k_n}\sum_{i=1}^{k_n-1}\left(g^n_i\right)^3\right)-\psi_3\right\}\left(\Delta X_{S_p}\right)^3.
\end{align*}
Since $\frac{1}{k_n}\sum_{i=1}^{k_n-1}\left(g^n_i\right)^3=\psi_3+O(k_n^{-1})$ by the Lipschitz continuity of $g$ and $\lim_nP(\Omega_{n}(T,m))=1$, we obtain the desired result.
\end{proof}

% \section{aim1}
\begin{proof}[Proof of \eqref{aim1}]
We decompose the target quantity as
\begin{align*}
\mathbb{I}_n(m)
&=\frac{\Delta_n^{-1/4}}{\psi_3k_n}\sum_{i=0}^{N^n_{T}-k_n+1}\left[3\left(\overline{Z(m)}_i\right)^2\overline{\epsilon}_i+\left\{\left(\overline{X(m)}_i+\overline{\epsilon}_i\right)^3-\left(\overline{X(m)}_i\right)^3-3\left(\overline{Z(m)}_i\right)^2\overline{\epsilon}_i\right\}\right]\\
&=:\mathbb{I}_n^{(1)}(m)+\mathbb{I}_n^{(2)}(m).
\end{align*}
It suffices to prove
\begin{equation}\label{aimI}
\lim_{m\to\infty}\limsup_{n\to\infty}P_n\left(\left|\mathbb{I}_n^{(l)}(m)\right|>\eta\right)=0
\end{equation}
for $l=1,2$. First, by \eqref{moment:noise} and \eqref{moment:Z} we have
\begin{align*}
E\left[\left|\mathbb{I}_n^{(1)}(m)\right|\right]
\lesssim\frac{1}{k_n}E\left[\sum_{i=0}^{N^n_{T}+1}\left(\overline{Z(m)}_i\right)^2\right]
\lesssim\frac{\overline{\gamma}_m}{k_n}E\left[\sum_{i=0}^{N^n_{T}+1}\left|\overline{I}_i\right|\right]
\lesssim\overline{\gamma}_m,
\end{align*}
where we use the following inequality to obtain the final upper bound:
\begin{align*}
\sum_{i=0}^{N^n_{T}+1}\left|\overline{I}_i\right|
&=\sum_{i=0}^{N^n_{T}+1}\sum_{p=0}^{k_n-1}|I_{i+p}|
=\sum_{p=0}^{k_n-1}\sum_{i=0}^{N^n_T-k_n+1}|I_{i+p}|
+\sum_{i=N^n_T-k_n+2}^{N^n_{T}+1}\sum_{p=0}^{k_n-1}|I_{i+p}|\\
&\leq k_nT+k_n(k_n-1)\bar{r}_n
\lesssim k_n.
\end{align*}
Hence \eqref{aimI} holds true for $l=1$.

Next we consider the case $l=2$, and we start with some preliminary results. First we note that
\begin{equation}\label{Xedge}
\sup_{N^n_T-k_n+1<i\leq N^n_T+1}\left|\overline{V}_i\right|=O_p(\sqrt{k_n\bar{r}_n})
\end{equation}
for $V\in\{X(m),Z(m)\}$. In fact, summation by parts yields $\overline{V}_i=-\sum_{p=0}^{k_n-1}\Delta(g)^n_p(V_{t^n_{i+p}}-V_{t^n_i})$, and $\sup_{|h|\leq h_0}|V_{t+h}-V_t|=O_p(\sqrt{h_0})$ as $h_0\downarrow0$ by \ref{hypo:SA2} and the Doob inequality, hence \eqref{Xedge} holds ture by \eqref{SA4}.
Next, for any $K>0$ we define the $(\mathcal{F}_t)$-stopping time $R^n_K$ by
\begin{equation}\label{def:Rn}
R^n_{K}=\inf\{s:n^{-1}N^n_s>K\}.
\end{equation}
Since $\Delta N^n_s\leq 1$ for every $s$, it holds that
\begin{equation}\label{SC3}
N^n_{s\wedge R^n_K}\leq K n+1
\end{equation}
for all $s\geq0$. Moreover, by \eqref{C3} we also have
\begin{equation}\label{alpha.infinity}
\limsup_{K\to\infty}\limsup_{n\to\infty}P\left(R^n_K\leq t\right)=0.
\end{equation}

Now we turn to the proof of \eqref{aimI} for the case $l=2$. For each $m$, set
\[
\zeta(m)^n_i=\frac{\Delta_n^{-1/4}}{\psi_3k_n}\left\{\left(\overline{X(m)}_i+\overline{\epsilon}_i\right)^3-\left(\overline{X(m)}_i\right)^3-3\left(\overline{Z(m)}_i\right)^2\overline{\epsilon}_i\right\},\qquad
i=0,1,\dots.
\]
Then, by \eqref{moment:noise} and \eqref{Xedge} we have $\mathbb{I}_n^{(2)}(m)=\sum_{i=0}^{N^n_T+1}\zeta(m)^n_i+o_p(1)$ as $n\to\infty$. Therefore, by the Markov inequality and \eqref{alpha.infinity} it is enough to prove $\sum_{i=0}^{N^n_{T\wedge R^n_K}+1}\zeta(m)^n_i=o_p(1)$ as $n\to\infty$ for any fixed $K>0$ and $m\in\mathbb{N}$.

Since integration by parts yields $\overline{X}^c_i\overline{Z(m)}_i=\int_{\overline{I}_i}\overline{X}^c_{i,s-}d\overline{Z(m)}_{i,s}+\int_{\overline{I}_i}\overline{Z(m)}_{i,s-}d\overline{X}^c_{i,s}$, we have
\begin{equation}\label{revise-ineq}
E\left[\left|\overline{X}^c_i\overline{Z(m)}_i\right|^2|\mathcal{F}_{t^n_{i-1}}\right]
\lesssim E\left[\left(\sup_{s\in\overline{I}_i}\left|\overline{X}^c_{i,s-}\right|^2+\sup_{s\in\overline{I}_i}\left|\overline{Z(m)}_{i,s-}\right|^2\right)\left|\overline{I}_i\right|\right]
\lesssim\left(k_n\bar{r}_n\right)^2
\end{equation}
by \ref{hypo:SA2}, \eqref{SA4} and \eqref{moment:C}--\eqref{moment:Z}. 
%Combining this estimate with \eqref{moment:drift}--\eqref{moment:noise}, we obtain $E[|\zeta(m)^n_i|^2|\mathcal{F}^n_{t^n_{i-1}}]\lesssim_m\bar{r}_n^2$ by construction.
Moreover, we can rewrite $\zeta(m)^n_i$ as
\[
\zeta(m)^n_i=\frac{\Delta_n^{-1/4}}{\psi_3k_n}\left\{3\left(\left(\overline{C(m)}_i\right)^2+2\overline{C(m)}_i\overline{Z(m)}_i\right)\overline{\epsilon}_i+3\overline{X(m)}_i\left(\overline{\epsilon}_i\right)^2+\left(\overline{\epsilon}_i\right)^3\right\}.
\]
Hence, using the relation $C(m)_t=X_0+B(m)_t+X^c_t$ and estimates \eqref{moment:drift}--\eqref{moment:noise} and \eqref{revise-ineq}, we obtain $E[|\zeta(m)^n_i|^2|\mathcal{F}^n_{t^n_{i-1}}]\lesssim_m\bar{r}_n^2$. 
Therefore, noting that $\zeta(m)^n_i$ is $\mathcal{F}^n_{t^n_{i+k_n-1}}$-measurable, we have
\begin{align*}
E\left[\left|\sum_{i=0}^{N^n_{T\wedge R^n_K}+1}\left(\zeta(m)^n_i-E[\zeta(m)^n_i|\mathcal{F}^n_{t^n_{i-1}}]\right)\right|^2\right]
\lesssim_m \Delta_n^{-1}k_n\bar{r}_n^2=o(1).
\end{align*}
Hence it holds that $\sum_{i=0}^{N^n_{T\wedge R^n_K}+1}\zeta(m)^n_i=\sum_{i=0}^{N^n_{T\wedge R^n_K}+1}E[\zeta(m)^n_i|\mathcal{F}^n_{t^n_{i-1}}]+o_p(1)$. Now, since $E[\overline{\epsilon}_i|\mathcal{F}]=0$, we can decompose $\sum_{i=0}^{N^n_{T\wedge R^n_K}+1}E[\zeta(m)^n_i|\mathcal{F}^n_{t^n_{i-1}}]$ as
\begin{align*}
\sum_{i=0}^{N^n_{T\wedge R^n_K}+1}E[\zeta(m)^n_i|\mathcal{F}^n_{t^n_{i-1}}]
&=\frac{\Delta_n^{-1/4}}{\psi_3k_n}\sum_{i=0}^{N^n_{T\wedge R^n_K}+1}\left\{3E[\overline{X(m)}_i\left(\overline{\epsilon}_i\right)^2|\mathcal{F}^n_{t^n_{i-1}}]+E[\left(\overline{\epsilon}_i\right)^3|\mathcal{F}^n_{t^n_{i-1}}]\right\}\\
&=:\mathbb{A}_{1,n}+\mathbb{A}_{2,n}
\end{align*}
(we drop the index $m$ because we fix it here). First we consider $\mathbb{A}_{1,n}$. We can rewrite it as
\[
\mathbb{A}_{1,n}=\frac{3\Delta_n^{-1/4}}{\psi_3k_n}\sum_{i=0}^{N^n_{T\wedge R^n_K}+1}\sum_{p=0}^{k_n-1}(\Delta(g)^n_p)^2E[\overline{X(m)}_i\alpha_{t^n_{i+p}}|\mathcal{F}^n_{t^n_{i-1}}].
\]
Since we have $E[\overline{X(m)}_i\alpha_{t^n_{i-1}}|\mathcal{F}_{t^n_{i-1}}]=E[\overline{B(m)}_i\alpha_{t^n_{i-1}}|\mathcal{F}_{t^n_{i-1}}]$, it holds that
\begin{align*}
\left|E[\overline{X(m)}_i\alpha_{t^n_{i+p}}|\mathcal{F}_{t^n_{i-1}}]\right|
&\leq E\left[\left|\overline{X(m)}_i(\alpha_{t^n_{i+p}}-\alpha_{t^n_{i-1}})\right||\mathcal{F}_{t^n_{i-1}}\right]+E\left[\left|\overline{B(m)}_i\alpha_{t^n_{i-1}}\right||\mathcal{F}_{t^n_{i-1}}\right]\\
&\lesssim_m(k_n\bar{r}_n)^{(1+\varpi)/2}+k_n\bar{r}_n
\end{align*}
by the Schwarz inequality, \ref{hypo:SA3} and \eqref{moment:drift}--\eqref{moment:Z}. Therefore, we obtain $\mathbb{A}_{1,n}=O_p\left(\Delta_n^{-\frac{1}{4}+\left(\frac{1+\varpi}{2}\wedge1\right)\left(\xi-\frac{1}{2}\right)}\right)=o_p(1)$ by \eqref{est.xi} after distinguishing the cases $\varpi\geq 1$ and $\varpi < 1$. Next, let us consider $\mathbb{A}_{2,n}$. For any nonnegative integers $p,q,r$, $E[\epsilon_{t^n_{i+p}}\epsilon_{t^n_{i+q}}\epsilon_{t^n_{i+r}}|\mathcal{F}^n_{t^n_{i-1}}]$ does not vanish only if $p=q=r$, hence we have
\begin{align*}
\mathbb{A}_{2,n}
&=\frac{\Delta_n^{-1/4}}{\psi_3k_n}\sum_{i=0}^{N^n_{T\wedge R^n_K}+1}\sum_{p=0}^{k_n-1}(\Delta(g)^n_p)^3E[(\epsilon_{t^n_{i+p}})^3|\mathcal{F}^n_{t^n_{i-1}}]
=O_p(\Delta_n^{-1/4}k_n^{-1}\cdot \Delta_n^{-1}k_n^{-2})=o_p(1).
\end{align*}
Consequently, we conclude that $\sum_{i=0}^{N^n_{T\wedge R^n_K}+1}\zeta(m)^n_i=o_p(1)$ and the proof is completed.
\end{proof}

% \section{aim4}
\begin{proof}[Proof of \eqref{aim4}]
The proof is analogous to that of Proposition 6.2 of \cite{Koike2015jclt}, which is based on Propositions 6.4--6.7 of that paper. So we omit it.
\if0
We decompose the target quantity as
\begin{equation}\label{eq:III}
n^{1/4}\mathbb{III}_n
=\frac{3n^{1/4}}{k_n}\sum_{i=0}^{N^n_T-k_n+1}\left\{\overline{C}_i\left(\overline{J}_i\right)^2+\overline{\epsilon}_i\left(\overline{J}_i\right)^2\right\}
=:\mathbb{B}_{1,n}+\mathbb{B}_{2,n}.
\end{equation}
For any real-valued functions $u,v$, we set
\[
c^n_{u,v}(p,q)=\frac{1}{k_n}\sum_{i=(p\vee q-k_n+1)\vee1}^{p\wedge q}u^n_{p-i}v^n_{q-i}
\]
for any integers $p,q$. Then, on $\Omega_n(t)$ we have
\begin{align}
\mathbb{B}_{1,n}
&=3\frac{n^{1/4}}{k_n}\sum_{r:S_r\leq t}\sum_{i=N^n_{S_r-}-k_n+2}^{N^n_{S_r-}}\sum_{p=i}^{i+k_n-1}g^n_{p-i}C(I_{p})\left(g^n_{N^n_{S_r-}+1-i}\Delta X_{S_r}\right)^2\nonumber\\
&=3n^{1/4}\sum_{r:S_r\leq t}\sum_{p=N^n_{S_r-}-k_n+2}^{N^n_{S_r-}+k_n-1}c^n_{g,g^2}(p,N^n_{S_r-}+1)C(I_{p})\left(\Delta X_{S_r}\right)^2\nonumber\\
&=3\sum_{r:S_r\leq t}\left\{\widetilde{\eta}_+(n,r)+\widetilde{\eta}_-(n,r)\right\}\left(\Delta X_{S_r}\right)^2,\label{eq:III1}
\end{align}
where
\begin{equation*}
\left\{\begin{array}{l}
\widetilde{\eta}_+(n,r)=n^{1/4}\sum_{p=N^{n}_{S_r-}+1}^{N^{n}_{S_r-}+k_n-1}c^n_{g,g^2}(p,N^{n}_{S_r-}+1)C(I_p),\\
\widetilde{\eta}_-(n,r)=n^{1/4}\sum_{p=(N^{n}_{S_r-}-k_n+2)_+}^{N^{n}_{S_r-}}c^n_{g,g^2}(p,N^{n}_{S_r-}+1)C(I_p).
\end{array}\right.
\end{equation*}
Using the property of $g$, on $\Omega_n(t)$ we analogously have
\begin{align}
\mathbb{B}_{2,n}
&=-3\frac{n^{1/4}}{k_n}\sum_{r:S_r\leq t}\sum_{i=N^n_{S_r-}-k_n+2}^{N^n_{S_r-}}\sum_{p=0}^{k_n-1}\Delta(g)^n_{p}\epsilon_{t^n_{i+p}}\left(g^n_{N^n_{S_r-}+1-i}\Delta X_{S_r}\right)^2\nonumber\\
&=-3\frac{n^{1/4}}{k_n^2}\sum_{r:S_r\leq t}\sum_{i=N^n_{S_r-}-k_n+2}^{N^n_{S_r-}}\sum_{p=i}^{i+k_n-1}(g')^n_{p-i}\epsilon_{t^n_{p}}\left(g^n_{N^n_{S_r-}+1-i}\Delta X_{S_r}\right)^2+o_p(1)\nonumber\\
&=-3\frac{n^{1/4}}{k_n}\sum_{r:S_r\leq t}\sum_{p=N^n_{S_r-}-k_n+2}^{N^n_{S_r-}+k_n-1}c^n_{g',g^2}(p,N^n_{S_r-}+1)\epsilon_{t^n_p}\left(\Delta X_{S_r}\right)^2+o_p(1)\nonumber\\
&=3\sum_{r\in\mathcal{R}_m:S_r\leq t}\left\{\widetilde{\eta}'_+(n,r)+\widetilde{\eta}'_-(n,r)\right\}\left(\Delta X_{S_r}\right)^2+o_p(1),\label{eq:III2}
\end{align}
where
\begin{equation*}
\left\{\begin{array}{l}
\widetilde{\eta}'_+(n,r)=-\frac{n^{1/4}}{k_n}\sum_{p=N^{n}_{S_r-}+1}^{N^{n}_{S_r-}+k_n-1}c^n_{g',g^2}(p,N^{n}_{S_r-}+1)\epsilon_{t^n_p},\\
\widetilde{\eta}'_-(n,r)=-\frac{n^{1/4}}{k_n}\sum_{p=(N^{n}_{S_r-}-k_n+2)_+}^{N^{n}_{S_r-}}c^n_{g',g^2}(p,N^{n}_{S_r-}+1)\epsilon_{t^n_p}.
\end{array}\right.
\end{equation*}
Now we set $\zeta_\pm(n,r)=(\eta_\pm(n,r),\widetilde{\eta}_\pm(n,r))^*$ and $\zeta'_\pm(n,r)=(\eta'_\pm(n,r),\widetilde{\eta}'_\pm(n,r))^*$, where
\begin{equation*}
\left\{\begin{array}{l}
\eta_+(n,r)=n^{1/4}\sum_{p=N^{n}_{S_r-}+1}^{N^{n}_{S_r-}+k_n}c^n_{g,g}(p,N^{n}_{S_r-}+1)C(I_p),\\
\eta_-(n,r)=n^{1/4}\sum_{p=(N^{n}_{S_r-}-k_n+2)_+}^{N^{n}_{S_r-}}c^n_{g,g}(p,N^{n}_{S_r-}+1)C(I_p).
\end{array}\right.
\end{equation*}
and
\begin{equation*}
\left\{\begin{array}{l}
\eta'_+(n,r)=-\frac{n^{1/4}}{k_n}\sum_{p=N^{n}_{S_r-}+1}^{N^{n}_{S_r-}+k_n}c^n_{g',g}(p,N^{n}_{S_r-}+1)\epsilon_{t^n_p},\\
\eta'_-(n,r)=-\frac{n^{1/4}}{k_n}\sum_{p=(N^{n}_{S_r-}-k_n+2)_+}^{N^{n}_{S_r-}}c^n_{g',g}(p,N^{n}_{S_r-}+1)\epsilon_{t^n_p}.
\end{array}\right.
\end{equation*}
Then, a similar argument to the proof of Proposition 6.2 of \cite{Koike2015jclt}, which is based on Propositions 6.4--6.7 of that paper, implies that
\begin{multline}\label{jointCLT}
\left(U_n,(\zeta_-(n,r),\zeta'_-(n,r),\zeta_+(n,r),\zeta'_+(n,r))_{r\geq1}\right)\\
\to^{d_s}\left(\sqrt{\Gamma^c_t}\zeta^c,(\sigma_{S_r-}\sqrt{\theta G_{S_r-}}z_{r-},\sqrt{\alpha_{S_r-}\theta^{-1}}z'_{r-},\sigma_{S_r}\sqrt{\theta G_{S_r}}z_{r+},\sqrt{\alpha_{S_r}\theta^{-1}}z'_{r+})_{r\geq1}\right)
\end{multline}
as $n\to\infty$ for the product topology on the space $\mathbb{R}\times(\mathbb{R}^8)^\mathbb{N}$, where
\[
U_n=n^{1/4}\left(\frac{1}{\psi_2k_n}\sum_{i=0}^{N^n_T-k_n+1}\left(\overline{C}_i+\overline{\epsilon}_i\right)^2-\frac{\psi_1}{2\psi_2k_n}\sum_{i=1}^{N^n_T}Y(I_i)^2-\int_0^t\sigma_s^2ds\right),
\]
$\zeta_c$ is a standard normal variable defined on an extension of $\mathcal{B}$ and independent of $\mathcal{F}$, and $(z_{r-})_{r\geq1},(z'_{r-})_{r\geq1},(z_{r+})_{r\geq1},(z'_{r+})_{r\geq1}$ are mutually independent sequences of i.i.d.~bivariate centered Gaussian variables, which are defined on the extension of $\mathcal{B}$ on which $\zeta^c$ is defined and independent of $\mathcal{F}$ and $\zeta^c$, as well as
\begin{align*}
E[z^{n}_{r\pm}(z^{n}_{r\pm})^*]
=\left[
\begin{array}{cc}
\Phi_{22}/\psi_2^2  & \Phi_{23\pm}/(\psi_2\psi_3)   \\
\Phi_{23\pm}/(\psi_2\psi_3)  & \Phi_{3\pm}/\psi_3^2
\end{array}
\right],\qquad
E[z'^{n}_{r\pm}(z'^{n}_{r\pm})^*]
=\left[
\begin{array}{cc}
\Phi_{12}/\psi_2^2  & \Phi'_{23\pm}/(\psi_2\psi_3)   \\
\Phi'_{23\pm}/(\psi_2\psi_3)  & \Phi'_{3\pm}/\psi_3^2
\end{array}
\right].
\end{align*}
Now, similar arguments to prove Eqs.(6.5)--(6.6) and (6.35) from \cite{Koike2015jclt} yield
\[
n^{1/4}\left(\PRV^n_t-[X,X]_t\right)=U_n+2\sum_{r:S_r\leq t}\left\{\eta_-(n,r)+\eta'_-(n,r)+\eta_+(n,r)+\eta'_+(n,r)\right\}\Delta X_{S_r}+o_p(1).
\]
Combining this expression with Eqs.\eqref{eq:III}--\eqref{jointCLT}, we obtain \eqref{aim3}.
\fi
\end{proof}

% \section{lenglart2}
Recall that, for a locally square-integrable martingale $M$ such that $M_0=0$, $\langle M\rangle$ denotes the predictable quadratic variation of $M$, i.e.~the predictable increasing process such that $M^2-\langle M\rangle$ is a local martingale (such a process always exists and is unique; see e.g.~Theorem 4.2 from Chapter I of \cite{JS2003}).    
The next inequality plays a key role in the remaining proof:
\begin{lem}\label{lenglart2}
We have
\begin{align*}
E\left[\sup_{\tau_1\leq t\leq\tau_2}\left|M_t\right||\mathcal{F}_{\tau_1}\right]\leq
3E\left[\sqrt{\langle M\rangle_{\tau_2}}|\mathcal{F}_{\tau_1}\right]
\end{align*}
for any stopping time $\tau_1,\tau_2$ such that $\tau_1\leq\tau_2$ and for any locally square-integrable martingale $M$ such that $M_0=0$.
\end{lem}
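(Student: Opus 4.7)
The plan is to derive the bound from a conditional form of Lenglart's domination inequality. The key ingredient is the tail estimate
\[
P\left(\sup_{\tau_1\leq t\leq\tau_2}|M_t|>\epsilon\,\Big|\,\mathcal{F}_{\tau_1}\right)\leq \frac{E[\langle M\rangle_{\tau_2}\wedge\epsilon^2|\mathcal{F}_{\tau_1}]}{\epsilon^2}+P\left(\sqrt{\langle M\rangle_{\tau_2}}>\epsilon\,\Big|\,\mathcal{F}_{\tau_1}\right),
\]
valid for every $\epsilon>0$; from this the moment bound with constant $3$ follows by integration in $\epsilon$.

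First I would localize to the case where $M$ is a true square-integrable martingale: pick a reducing sequence of stopping times $(\rho_k)$, prove the inequality for each $M^{\rho_k}$, and pass to the limit using Fatou on the left and monotone convergence $\langle M\rangle_{\tau_2\wedge\rho_k}\uparrow\langle M\rangle_{\tau_2}$ on the right. To obtain the tail bound I would introduce the stopping time $\sigma_\epsilon=\inf\{t\geq\tau_1:\langle M\rangle_t>\epsilon^2\}\wedge\tau_2$. Since $\{\sigma_\epsilon<\tau_2\}\subset\{\langle M\rangle_{\tau_2}>\epsilon^2\}$, the target event is contained in $\{\sup_{\tau_1\leq t\leq\sigma_\epsilon}|M_t|>\epsilon\}\cup\{\langle M\rangle_{\tau_2}>\epsilon^2\}$, and the first piece is controlled by the conditional Doob maximal inequality applied to the stopped martingale $M^{\sigma_\epsilon}$:
\[
P\!\left(\sup_{\tau_1\leq t\leq\sigma_\epsilon}|M_t-M_{\tau_1}|>\epsilon\,\Big|\,\mathcal{F}_{\tau_1}\right)\leq \frac{E[(M_{\sigma_\epsilon}-M_{\tau_1})^2|\mathcal{F}_{\tau_1}]}{\epsilon^2}=\frac{E[\langle M\rangle_{\sigma_\epsilon}-\langle M\rangle_{\tau_1}|\mathcal{F}_{\tau_1}]}{\epsilon^2},
\]
where the equality uses the optional stopping identity for $M^2-\langle M\rangle$. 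The right-hand side is bounded above by $\epsilon^{-2}E[\langle M\rangle_{\tau_2}\wedge\epsilon^2|\mathcal{F}_{\tau_1}]$ (modulo the jump discussion below).

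Finally I would integrate the tail bound in $\epsilon$ and apply conditional Fubini:
\[
E\!\left[\sup_{\tau_1\leq t\leq\tau_2}|M_t|\,\Big|\,\mathcal{F}_{\tau_1}\right]\leq E\!\left[\int_0^\infty\frac{\langle M\rangle_{\tau_2}\wedge\epsilon^2}{\epsilon^2}d\epsilon\,\Big|\,\mathcal{F}_{\tau_1}\right]+E\!\left[\sqrt{\langle M\rangle_{\tau_2}}\,\Big|\,\mathcal{F}_{\tau_1}\right].
\]
For each realisation $c=\langle M\rangle_{\tau_2}$ the inner integral is elementary: $\int_0^{\sqrt c}d\epsilon+\int_{\sqrt c}^\infty c\epsilon^{-2}d\epsilon=2\sqrt c$, yielding the overall factor $2+1=3$. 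The main delicacy I expect is the handling of a possible jump of $\langle M\rangle$ at $\sigma_\epsilon$ (so that $\langle M\rangle_{\sigma_\epsilon}$ can exceed $\epsilon^2$); one circumvents this by working with the left-limit $\langle M\rangle_{\sigma_\epsilon-}\leq\epsilon^2$ and noting that on $\{\sigma_\epsilon=\tau_2\}$ the bound $\langle M\rangle_{\sigma_\epsilon}-\langle M\rangle_{\tau_1}\leq \langle M\rangle_{\tau_2}\wedge\epsilon^2$ only needs to hold up to the event $\{\langle M\rangle_{\tau_2}>\epsilon^2\}$ already absorbed into the second term of the tail estimate. This is a standard adjustment in Lenglart-type arguments and does not affect the constant $3$.
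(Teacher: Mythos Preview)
Your Lenglart tail-bound-then-integrate route is exactly the standard proof of this inequality and is the content of the Liptser--Shiryaev theorem that the paper cites; the paper itself gives no argument beyond that reference, so your write-up is more detailed than the paper's, not different in spirit.

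There is, however, a real gap at your Doob step. You pass silently from $\sup_{\tau_1\le t\le\sigma_\epsilon}|M_t|$ to $\sup_{\tau_1\le t\le\sigma_\epsilon}|M_t-M_{\tau_1}|$, and these do not coincide in general. In fact the inequality as stated in the lemma is false without further hypotheses: take $M$ a standard Brownian motion and $\tau_1=\tau_2=\inf\{t:|M_t|=1\}$; then the left-hand side equals $1$ while the right-hand side equals $3\sqrt{\tau_1}$, and $P(\tau_1<1/9)>0$. So no almost-sure bound of the claimed form can hold.

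What saves the paper is that in every application of this lemma the martingale in question is a stochastic integral over $\overline{I}_i=[t^n_{i-1},t^n_{i+k_n-1})$, hence vanishes on $[0,\tau_1]$; in particular $M_{\tau_1}=0$ and $\langle M\rangle_{\tau_1}=0$. Under that extra hypothesis your proof is correct as written, since then $|M_t-M_{\tau_1}|=|M_t|$ and $\langle M\rangle_{\sigma_\epsilon}-\langle M\rangle_{\tau_1}=\langle M\rangle_{\sigma_\epsilon}$, and your integration in $\epsilon$ yields the constant $3$ exactly as you computed. The cleanest fix is either to add the assumption $M_t=0$ for $t\le\tau_1$, or to state the incremental version with $\sup_{\tau_1\le t\le\tau_2}|M_t-M_{\tau_1}|$ on the left and $\sqrt{\langle M\rangle_{\tau_2}-\langle M\rangle_{\tau_1}}$ on the right; either way your argument then goes through without the jump-of-$\langle M\rangle$ caveat causing any trouble.
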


\begin{proof}
This result is a direct consequence of Theorem 5 from Chapter 1, Section 9 of \cite{LS1989}.
\end{proof}

% \section{aim3}
\begin{proof}[Proof of \eqref{aim3}]
Set $L(m)=X^c+Z(m)$. Then we have
\begin{align*}
\left|\mathbb{III}_n(m)\right|
&\leq3\frac{\Delta_n^{-1/4}}{\psi_3k_n}\sum_{i=0}^{N^n_T-k_n+1}\left\{\left(\overline{L(m)}_i\right)^2+\left(\overline{B(m)}_i+\overline{\epsilon}_i\right)^2\right\}\left|\overline{X'(m)}_i\right|\\
&=:\mathbb{III}_n^{(1)}+\mathbb{III}^{(2)}_n,
\end{align*}
so it suffices to prove $\mathbb{III}_n^{(l)}\to^P0$ as $n\to\infty$ for $l=1,2$ (note that we drop the index $m$ because it is fixed in this part). First, \eqref{moment:drift}, \eqref{moment:noise} and \eqref{moment:jump} yield
\begin{align*}
E\left[\left|\mathbb{III}_n^{(2)}\right|\right]
\lesssim_m\frac{\Delta_n^{-1/4}}{k_n^2}E\left[\sum_{i=0}^{N^n_T+1}\left|\overline{X'(m)}_i\right|\right]
\lesssim_m \Delta_n^{-1/4}k_n^{-1}=o(1),
\end{align*}
hence we have $\mathbb{III}^{(2)}_n\to^P0$ as $n\to\infty$.

To prove $\mathbb{III}^{(1)}_n\to^P0$ as $n\to\infty$, it suffices to show that there is a constant $K$ (which may depend on $m$) such that
\begin{equation}\label{aimIII}
E\left[\left|\left(\overline{L(m)}_i\right)^2\overline{X'(m)}_i\right||\mathcal{F}_{t^n_{i-1}}\right]
\leq K(k_n\bar{r}_n)^2
\end{equation}
for any $i,n$ because of the Lenglart inequality, \eqref{C3} and the fact that $\xi>7/8$. To prove \eqref{aimIII}, we consider the following decomposition of $\left(\overline{L(m)}_i\right)^2\overline{X'(m)}_i$, which is obtained by applying integration by parts repeatedly (note that $[L(m),X'(m)]\equiv0$ by construction):
\begin{align*}
\left(\overline{L(m)}_i\right)^2\overline{X'(m)}_i
&=\int_{\overline{I}_i}\left(\overline{L(m)}_{i,s-}\right)^2d\overline{X'(m)}_{i,s}
+2\int_{\overline{I}_i}\overline{X'(m)}_{i,s-}\overline{L(m)}_{i,s-}d\overline{L(m)}_{i,s}\\
&+\int_{\overline{I}_i}\overline{X'(m)}_{i,s-}d[\overline{L(m)}_{i,\cdot}]_s\\
&=:\mathbf{A}^{n,1}_i+2\mathbf{A}^{n,2}_i+\mathbf{A}^{n,3}_i.
\end{align*}
Then, it is enough to show that
\begin{equation}\label{aimIII-2}
E\left[\left|\mathbf{A}^{n,l}_i\right||\mathcal{F}_{t^n_{i-1}}\right]
\lesssim_m (k_n\bar{r}_n)^2
\end{equation}
for every $l=1,2,3$. First, we have
\begin{align*}
E\left[\left|\mathbf{A}^{n,1}_i\right||\mathcal{F}_{t^n_{i-1}}\right]
%&\leq E\left[\int_{\overline{I}_i}\int_{A_m}\left(\overline{L(m)}_{i,s-}\right)^2\left|\delta(s,z)\right|\nu(ds,dz)|\mathcal{F}_{t^n_{i-1}}\right]\\
&\lesssim_mE\left[\sup_{s\in\overline{I}_i}\left(\overline{L(m)}_{i,s}\right)^2|\overline{I}_i|\big|\mathcal{F}_{t^n_{i-1}}\right]\\
&\lesssim_mk_n\bar{r}_nE\left[\sup_{s\in\overline{I}_i}\left(\overline{L(m)}_{i,s}\right)^2|\mathcal{F}_{t^n_{i-1}}\right]
\lesssim_m\left(k_n\bar{r}_n\right)^2
\end{align*}
by \ref{hypo:SA2}, \eqref{SA4} and \eqref{moment:C}--\eqref{moment:Z}, so \eqref{aimIII-2} holds true for $l=1$. Next, Lemma \ref{lenglart2} and \eqref{SA4} yield
\begin{align*}
E\left[\left|\mathbf{A}^{n,2}_i\right||\mathcal{F}_{t^n_{i-1}}\right]
&\leq 3E\left[\left\{\int_{\overline{I}_i}\left|\overline{X'(m)}_{i,s-}\overline{L(m)}_{i,s-}\right|^2d\langle\overline{L(m)}_{i,\cdot}\rangle_s\right\}^{1/2}|\mathcal{F}_{t^n_{i-1}}\right]\\
&\lesssim E\left[\sup_{s\in\overline{I}_i}\left|\overline{X'(m)}_{i,s}\overline{L(m)}_{i,s}\right|\sqrt{|\overline{I}_i|}|\mathcal{F}_{t^n_{i-1}}\right]\\
&\lesssim \sqrt{k_n\bar{r}_n}E\left[\sup_{s\in\overline{I}_i}\left|\overline{X'(m)}_{i,s}\overline{L(m)}_{i,s}\right||\mathcal{F}_{t^n_{i-1}}\right].
\end{align*}
Now, integration by parts, Lemma \ref{lenglart2}, \ref{hypo:SA2}, \eqref{SA4}, \eqref{moment:jump} and \eqref{moment:C}--\eqref{moment:Z} imply that
\begin{align*}
&E\left[\sup_{s\in\overline{I}_i}\left|\overline{X'(m)}_{i,s}\overline{L(m)}_{i,s}\right||\mathcal{F}_{t^n_{i-1}}\right]\\
%&\lesssim E\left[\left\{\int_{\overline{I}_i}\left(\overline{X'(m)}_{i,s}\right)^2\langle\overline{L(m)}_{i,\cdot}\rangle_s\right\}^{1/2}|\mathcal{F}_{t^n_{i-1}}\right]
%+E\left[\int_{\overline{I}_i}\int_{A_m}\left|\overline{X(m)}_{i,s}\delta(s,z)\right|\nu(ds,dz)|\mathcal{F}_{t^n_{i-1}}\right]\\
&\lesssim_mE\left[\sup_{s\in\overline{I}_i}\left|\overline{X'(m)}_{i,s}\right|\sqrt{\left|\overline{I}_i\right|}|\mathcal{F}_{t^n_{i-1}}\right]
+E\left[\sup_{s\in\overline{I}_i}\left|\overline{L(m)}_{i,s}\right|\left|\overline{I}_i\right||\mathcal{F}_{t^n_{i-1}}\right]
%&\lesssim_m\sqrt{\overline{\gamma}_mk_n\bar{r}_n}E\left[\sup_{s\in\overline{I}_i}\left|\overline{X'(m)}_{i,s}\right||\mathcal{F}_{t^n_{i-1}}\right]
%+mk_n\bar{r}_nE\left[\sup_{s\in\overline{I}_i}\left|\overline{L(m)}_{i,s}\right||\mathcal{F}_{t^n_{i-1}}\right]\\
\lesssim_m\left(k_n\bar{r}_n\right)^{3/2}.
\end{align*}
Consequently, \eqref{aimIII-2} holds true for $l=2$. Finally, by \ref{hypo:SA2}, \eqref{SA4} and \eqref{moment:jump} we have
\begin{align*}
E\left[\left|\mathbf{A}^{n,3}_i\right||\mathcal{F}_{t^n_{i-1}}\right]
%&\leq E\left[\int_{\overline{I}_i}\int_{A_m^c}\left|\overline{X'(m)}_{i,s}\right|\delta(s,z)^2\nu(ds,dz)|\mathcal{F}_{t^n_{i-1}}\right]\\
&\lesssim_mE\left[\sup_{s\in\overline{I}_i}\left|\overline{X'(m)}_{i,s}\right||\overline{I}_i||\mathcal{F}_{t^n_{i-1}}\right]
%\leq\overline{\gamma}_mk_n\bar{r}_n E\left[\sup_{s\in\overline{I}_i}\left|\overline{X'(m)}_{i,s}\right||\mathcal{F}_{t^n_{i-1}}\right]\\
\lesssim\left(k_n\bar{r}_n\right)^2,
\end{align*}
hence \eqref{aimIII-2} holds true for $l=3$. This completes the proof.
\end{proof}

% \section{aim5}
\begin{proof}[Proof of \eqref{aim5}]
Define the processes $B'(m)$ and $Z'(m)$ by $B'(m)_t=\int_0^t\int_{A_m}\delta(s,z)ds\lambda(dz)$ and $Z'(m)_t=X'(m)_t-B'(m)_t=\int_0^t\int_{A_m}\delta(s,z)(\mu-\nu)(ds,dz)$.  Since we have
\begin{align*}
\left|\mathbb{V}_n(m)\right|
&\leq3\frac{\Delta_n^{-1/4}}{\psi_3k_n}\sum_{i=0}^{N^n_T-k_n+1}\left\{\left|\overline{Z(m)}_i\left(\overline{B'(m)}_i\right)^2\right|+\left|\overline{Z(m)}_i\left(\overline{Z'(m)}_i\right)^2\right|\right\}\\
&=:\mathbb{V}_n^{(1)}(m)+\mathbb{V}_n^{(2)}(m),
\end{align*}
it suffices to prove
\begin{equation}\label{aim5aim1}
\lim_{m\to\infty}\limsup_{n\to\infty}P_n\left(\mathbb{V}_n^{(l)}(m)>\eta\right)=0
\end{equation}
for $l=1,2$.

We have
\begin{align*}
E\left[\left|\overline{Z(m)}_i\left(\overline{B'(m)}_i\right)^2\right||\mathcal{F}_{t^n_{i-1}}\right]
\lesssim_m (k_n\bar{r}_n)^2E\left[\left|\overline{Z(m)}_i\right||\mathcal{F}_{t^n_{i-1}}\right]
\lesssim_m(k_n\bar{r}_n)^{5/2}
\end{align*}
by \ref{hypo:SA2}, \eqref{SA4} and \eqref{moment:Z}. Therefore, \eqref{aim5aim1} holds true for $l=1$ by the Lenglart inequality, \eqref{C3} and the fact that $\xi>4/5$.

Now we prove \eqref{aim5aim1} for $l=2$. We start with introducing a further localization procedure for the observation times. For each $K\in\mathbb{N}$, we define the sequence $(t^n_{i}(K))_{i=-1}^\infty$ inductively by $t^n_{-1}(K)=0$ and
\begin{equation*}
t^n_{i}(K)=
\left\{\begin{array}{ll}
t^n_i, & \textrm{if $\max_{j=1,\dots,i}E\left[\Delta_n^{-1}|I_j|\big|\mathcal{F}_{t^n_{j-1}}\right]\leq K$ and $\Delta_nN^n_{t^n_{i-1}}\leq K$},\\
t^n_{i-1}(K)+\Delta_n, & \textrm{otherwise}.
\end{array}\right.
\end{equation*}
By construction $t^n_{i}(K)$ is an $(\mathcal{F}_t)$-stopping time for every $i$. Moreover, by \ref{hypo:A1} and \eqref{C3} we have $\sup_nP(t^n_i\neq t^n_i(K)\text{ for some }i\in\{0,1,\dots,N^n_T\})\to0$ as $K\to\infty$. Consequently, it suffices to show that
\begin{equation}\label{aim5aim2}
\lim_{m\to\infty}\limsup_{n\to\infty}P_n\left(\mathbb{V}_n^{(2)}(m)>\eta,t^n_i=t^n_i(K) \text{ for any }i\in\{0,1,\dots,N^n_T\}\right)=0
\end{equation}
for any fixed $K\in\mathbb{N}$.

Set $\widetilde{I}_i=[t^n_{i-1}(K),t^n_{i}(K))$ and define the process $\widetilde{g}^n_i$ by $\widetilde{g}^n_i(s)=\sum_{p=1}^{k_n-1}g^n_p1_{\widetilde{I}_{i+p}}(s)$. For any semimartingale $V$, we define the process $\widetilde{V}_{i,t}$ by $\widetilde{V}_{i,t}=\int_0^t\widetilde{g}^n_i(s-)dV_s$. Then, to prove \eqref{aim5aim2} it is enough to show that
\begin{equation}\label{aim5aim3}
\lim_m\limsup_nE\left[\frac{\Delta_n^{-1/4}}{k_n}\sum_{i=0}^{\widetilde{N}^n_T+1}\left|\widetilde{Z(m)}_{i,t^n_{i+k_n-1}(K)}\left(\widetilde{Z'(m)}_{i,t^n_{i+k_n-1}(K)}\right)^2\right|\right]
=0,
\end{equation}
where $\widetilde{N}^n_T=\max\{i:t^n_i(K)\leq T\}$. Note that $\widetilde{N}^n_T\leq (K+T)\Delta_n^{-1}$ by construction. 

Set $\widetilde{I}^+_i=[t^n_{i-1}(K),t^n_{i+k_n-1}(K))$. 
To prove \eqref{aim5aim3}, we consider the following decomposition, which is obtained by applying integration by parts repeatedly (note that $[Z(m),Z'(m)]\equiv0$ by construction):
\begin{align*}
\widetilde{Z(m)}_{i,t^n_{i+k_n-1}(K)}\left(\widetilde{Z'(m)}_{i,t^n_{i+k_n-1}(K)}\right)^2
&=\int_{\widetilde{I}_i^+}\left(\widetilde{Z'(m)}_{i,s-}\right)^2d\widetilde{Z(m)}_{i,s}
+2\int_{\widetilde{I}_i^+}\widetilde{Z(m)}_{i,s-}\widetilde{Z'(m)}_{i,s-}d\widetilde{Z'(m)}_{i,s}\\
&+\int_{\widetilde{I}_i^+}\widetilde{Z(m)}_{i,s-}d[\widetilde{Z'(m)}_{i,\cdot}]_s\\
&=:\mathbf{A}(m)^{n,1}_i+2\mathbf{A}(m)^{n,2}_i+\mathbf{A}(m)^{n,3}_i.
\end{align*}
Then, it is enough to show that
\begin{equation}\label{aimV}
\lim_m\limsup_nE\left[\frac{\Delta_n^{-1/4}}{k_n}\sum_{i=0}^{\widetilde{N}^n_T+1}\left|\mathbf{A}(m)^{n,l}_i\right|\right]
=0
\end{equation}
for every $l=1,2,3$. First, we consider the case $l=1$. Lemma \ref{lenglart2} and \ref{hypo:SA2} yield
\begin{align*}
E\left[\left|\mathbf{A}(m)^{n,1}_i\right||\mathcal{F}_{t^n_{i-1}(K)}\right]
%&\lesssim E\left[\left\{\int_{\widetilde{I}_{i}}\left(\widetilde{Z'(m)}_{i,s}\right)^4d\langle\widetilde{Z(m)}_{i,\cdot}\rangle\right\}^{1/2}|\mathcal{F}_{T^K_{i-1}}\right]
&\lesssim \sqrt{\overline{\gamma}_m}E\left[\sup_{s\in\widetilde{I}^+_i}\left(\widetilde{Z'(m)}_{i,s}\right)^2\sqrt{|\widetilde{I}_i^+|}|\mathcal{F}_{t^n_{i-1}(K)}\right]\\
&\lesssim\sqrt{\overline{\gamma}_m}E\left[\sup_{s\in\widetilde{I}^+_i}\left(\widetilde{Z'(m)}_{i,s}\right)^2\sqrt{\left|\sum_{p=1}^{k_n-1}\left(|\widetilde{I}_{i+p}|-E\left[|\widetilde{I}_{i+p}|\big|\mathcal{F}_{T^K_{i+p-1}}\right]\right)\right|}|\mathcal{F}_{t^n_{i-1}(K)}\right]\\
&\qquad+\sqrt{\overline{\gamma}_m}E\left[\sup_{s\in\widetilde{I}^+_i}\left(\widetilde{Z'(m)}_{i,s}\right)^2\sqrt{\sum_{p=1}^{k_n-1}E\left[|\widetilde{I}_{i+p}|\big|\mathcal{F}_{T^K_{i+p-1}}\right]}|\mathcal{F}_{t^n_{i-1}(K)}\right]\\
&=:\sqrt{\overline{\gamma}_m}\left(\mathbf{B}(m)^{n,1}_i+\mathbf{B}(m)^{n,2}_i\right).
\end{align*}
It suffices to prove
\begin{equation}\label{aimV-1}
\lim_m\limsup_nE\left[\frac{\Delta_n^{-1/4}}{k_n}\sum_{i=0}^{\widetilde{N}^n_T+1}\sqrt{\overline{\gamma}_m}\mathbf{B}(m)^{n,j}_i\right]=0
\end{equation}
for $j=1,2$. By the H\"older and BDG inequalities and \eqref{SA4}, we have
\begin{align*}
\mathbf{B}(m)^{n,1}_i
&\lesssim(k_n\bar{r}_n^2)^{1/4}\left\{E\left[\sup_{s\in\widetilde{I}^+_i}\left(\widetilde{Z'(m)}_{i,s}\right)^{2p}|\mathcal{F}_{t^n_{i-1}(K)}\right]\right\}^{1/p}
\end{align*}
for any $p\in(1,2]$. Therefore, the Novikov inequality (Theorem 1 of \cite{Novikov1975}) implies that
\begin{align*}
\mathbf{B}(m)^{n,1}_i
&\lesssim(k_n\bar{r}_n^2)^{1/4}(k_n\bar{r}_n)^{1/p}
\end{align*}
for any $p\in(1,2]$. Now, we can take $p\in(1,\frac{\xi-\frac{1}{2}}{\frac{7}{8}-\frac{\xi}{2}})$ because $\xi>\frac{11}{12}$, hence the above inequality yields \eqref{aimV-1} for $j=1$. On the other hand, the construction of $(t^n_i(K))$ and the Doob inequality imply that
\begin{align*}
\mathbf{B}(m)^{n,2}_i
&\lesssim\sqrt{k_n\Delta_n}E\left[\sup_{s\in\widetilde{I}^+_i}\left(\widetilde{Z'(m)}_{i,s}\right)^{2}|\mathcal{F}_{t^n_{i-1}(K)}\right]
\lesssim\sqrt{k_n\Delta_n}E\left[\left|\widetilde{I}^+_i\right||\mathcal{F}_{t^n_{i-1}(K)}\right]
\lesssim\left(k_n\Delta_n\right)^{3/2},
\end{align*}
hence \eqref{aimV-1} also holds true for $j=2$.

Next consider the case $l=2$. 
\if0
Lemma \ref{lenglart2} and \ref{hypo:SA2} yield
\begin{align*}
E\left[\left|\mathbf{A}(m)^{n,2}_i\right||\mathcal{F}_{t^n_{i-1}(K)}\right]
%&\lesssim E\left[\left\{\int_{\widetilde{I}_{i}}\left(\widetilde{Z(m)}_{i,s}\widetilde{Z'(m)}_{i,s}\right)^2d\langle\widetilde{Z'(m)}_{i,\cdot}\rangle_s\right\}^{1/2}|\mathcal{F}_{T^K_{i-1}}\right]\\
&\lesssim E\left[\sup_{s\in\widetilde{I}^+_i}\left|\widetilde{Z(m)}_{i,s}\widetilde{Z'(m)}_{i,s}\right|\sqrt{\left|\widetilde{I}_i^+\right|}|\mathcal{F}_{t^n_{i-1}(K)}\right]\\
&\lesssim E\left[\left(\sup_{s\in\widetilde{I}_i}\left|\widetilde{Z(m)}_{i,s}\right|^2+\sup_{s\in\widetilde{I}_i}\left|\widetilde{Z'(m)}_{i,s}\right|^2\right)\sqrt{\left|\widetilde{I}_i\right|}|\mathcal{F}_{t^n_{i-1}(K)}\right].
\end{align*}
Therefore, we can apply an analogous argument to the proof of the case $l=1$, and thus we obtain \eqref{aimV} for $l=2$.
\fi
Lemma \ref{lenglart2}, \ref{hypo:SA2} and the Schwarz inequality yield
\begin{align*}
E\left[\left|\mathbf{A}(m)^{n,2}_i\right||\mathcal{F}_{t^n_{i-1}(K)}\right]
%&\lesssim E\left[\left\{\int_{\widetilde{I}_{i}}\left(\widetilde{Z(m)}_{i,s}\widetilde{Z'(m)}_{i,s}\right)^2d\langle\widetilde{Z'(m)}_{i,\cdot}\rangle_s\right\}^{1/2}|\mathcal{F}_{T^K_{i-1}}\right]\\
&\lesssim E\left[\sup_{s\in\widetilde{I}^+_i}\left|\widetilde{Z(m)}_{i,s}\widetilde{Z'(m)}_{i,s}\right|\sqrt{\left|\widetilde{I}_i^+\right|}|\mathcal{F}_{t^n_{i-1}(K)}\right]\\
&\lesssim \sqrt{E\left[\sup_{s\in\widetilde{I}^+_i}\left|\widetilde{Z(m)}_{i,s}\widetilde{Z'(m)}_{i,s}\right|^2|\mathcal{F}_{t^n_{i-1}(K)}\right]E\left[\left|\widetilde{I}_i^+\right||\mathcal{F}_{t^n_{i-1}(K)}\right]}.
\end{align*}
Noting $[Z(m),Z'(m)]\equiv0$ by construction, we obtain the following identity for $s\in\widetilde{I}^+_i$ by applying integration by parts:
\begin{align*}
\widetilde{Z(m)}_{i,s}\widetilde{Z'(m)}_{i,s}
=\int_{t^n_{i-1}(K)}^s\widetilde{Z(m)}_{i,u-}d\widetilde{Z'(m)}_{i,u}+\int_{t^n_{i-1}(K)}^s\widetilde{Z'(m)}_{i,u-}d\widetilde{Z(m)}_{i,u}.
\end{align*} 
Therefore, the Doob inequality yields
\begin{align*}
E\left[\sup_{s\in\widetilde{I}^+_i}\left|\widetilde{Z(m)}_{i,s}\widetilde{Z'(m)}_{i,s}\right|^2|\mathcal{F}_{t^n_{i-1}(K)}\right]
%&\lesssim E\left[\int_{\widetilde{I}^+_i}\widetilde{Z(m)}_{i,u}^2du+\overline{\gamma}_m\int_{\widetilde{I}^+_i}\widetilde{Z'(m)}_{i,u}^2du|\mathcal{F}_{t^n_{i-1}(K)}\right]\\
&\lesssim E\left[\left(\sup_{s\in\widetilde{I}^+_i}\widetilde{Z(m)}_{i,s}^2+\overline{\gamma}_m\sup_{s\in\widetilde{I}^+_i}\widetilde{Z'(m)}_{i,s}^2\right)\left|\widetilde{I}_i^+\right||\mathcal{F}_{t^n_{i-1}(K)}\right].
\end{align*}
Hence, by an analogous argument to the proof of the case $l=1$ we obtain
\[
E\left[\sup_{s\in\widetilde{I}^+_i}\left|\widetilde{Z(m)}_{i,s}\widetilde{Z'(m)}_{i,s}\right|^2|\mathcal{F}_{t^n_{i-1}(K)}\right]\lesssim\overline{\gamma}_m(k_n\Delta_n)^2.
\]
Consequently, we conclude that
\[
E\left[\left|\mathbf{A}(m)^{n,2}_i\right||\mathcal{F}_{t^n_{i-1}(K)}\right]
\lesssim\sqrt{\overline{\gamma}_m}(k_n\Delta_n)^{3/2},
\]
and thus we obtain \eqref{aimV} for $l=2$. 

Finally consider the case $l=3$. Since \ref{hypo:SA2} yields
\begin{align*}
E\left[\left|\mathbf{A}(m)^{n,3}_i\right||\mathcal{F}_{t^n_{i-1}(K)}\right]
&\lesssim E\left[\sup_{s\in\widetilde{I}^+_{i}}\left|\widetilde{Z(m)}_{i,s}\right|\left|\widetilde{I}^+_{i}\right||\mathcal{F}_{t^n_{i-1}(K)}\right],
\end{align*}
we can again apply an analogous argument to the proof of the case $l=1$, and thus \eqref{aimV} holds true for $l=3$. This completes the proof.
\end{proof}

% \section{aim2}
\begin{proof}[Proof of \eqref{aim2}]
We decompose the target quantity as
\begin{align*}
&\mathbb{II}_n(m)\\
&=\frac{\Delta_n^{-1/4}}{\psi_3k_n}\sum_{i=0}^{N^n_T-k_n+1}\left(\overline{C(m)}_i\right)^3+\frac{3\Delta_n^{-1/4}}{\psi_3k_n}\sum_{i=0}^{N^n_T-k_n+1}\left(\overline{C(m)}_i\right)^2\overline{Z(m)}_i+\frac{3\Delta_n^{-1/4}}{\psi_3k_n}\sum_{i=0}^{N^n_T-k_n+1}\overline{C(m)}_i\left(\overline{Z(m)}_i\right)^2\\
&\qquad+\Delta_n^{-1/4}\left\{\frac{1}{\psi_3k_n}\sum_{i=0}^{N^n_T-k_n+1}\left(\overline{Z(m)}_i\right)^3-\sum_{0\leq s\leq T}(\Delta X(m)_s)^3\right\}\\
&=:\mathbb{II}_n^{(1)}(m)+\mathbb{II}_n^{(2)}(m)+\mathbb{II}_n^{(3)}(m)+\mathbb{II}_n^{(4)}(m).
\end{align*}
It suffices to prove
\begin{equation}\label{aimII}
\lim_{m\to\infty}\limsup_{n\to\infty}P_n\left(\left|\mathbb{II}_n^{(l)}(m)\right|>\eta\right)=0
\end{equation}
for every $l=1,2,3,4$.

Since Proposition 4.1 of \cite{Koike2015} yields $\mathbb{II}_n^{(1)}(m)\to^P0$ as $n\to\infty$ for every $m$, \eqref{aimII} holds true for $l=1$. Moreover, we can prove \eqref{aimII} for $l=2,3$ analogously to the proof of \eqref{aim5}. So it remains to prove \eqref{aimII} for $l=4$. Applying integration by parts repeatedly, we can decompose the target quantity as
\begin{align*}
\mathbb{II}_n^{(4)}(m)
&=\frac{3\Delta_n^{-1/4}}{\psi_3k_n}\sum_{i=0}^{N^n_T-k_n+1}\int_{\overline{I}_i}\left(\overline{Z(m)}_{i,s-}\right)^2d\overline{Z(m)}_{i,s}
+\frac{3\Delta_n^{-1/4}}{\psi_3k_n}\sum_{i=0}^{N^n_T-k_n+1}\int_{\overline{I}_i}\overline{Z(m)}_{i,s-}d[\overline{Z(m)}_{i,\cdot}]_s\\
&\qquad+\Delta_n^{-1/4}\left\{\frac{1}{\psi_3k_n}\sum_{i=0}^{N^n_T-k_n+1}\left[\overline{Z(m)}_{i,\cdot},[\overline{Z(m)}_{i,\cdot}]\right]_{t^n_{i+k_n-1}}-\sum_{0\leq s\leq T}(\Delta X(m)_s)^3\right\}\\
&=:\mathbb{A}^{(1)}_n(m)+\mathbb{A}^{(2)}_n(m)+\mathbb{A}^{(3)}_n(m),
\end{align*}
hence it is enough to prove
\begin{equation}\label{aimII-4}
\lim_{m\to\infty}\limsup_{n\to\infty}P_n\left(\left|\mathbb{A}_n^{(l)}(m)\right|>\eta\right)=0
\end{equation}
for every $l=1,2,3$.  For $l=1,2$, \eqref{aimII-4} can be shown analogously to the proof of \eqref{aim5aim1} for $l=2$. On the other hand, since we have $[\overline{Z(m)}_{i,\cdot}]_s=\sum_{p=1}^{k_n-1}(g^n_p)^2\sum_{t^n_{i+p-1}<u\leq s}(\Delta X(m)_u)^2$ for $s\in\overline{I}_i$ and $\Delta Z(m)=\Delta X(m)$, we obtain
\begin{align*}
\mathbb{A}^{(3)}_n(m)
%&=n^{1/4}\left\{\frac{1}{\psi_3k_n}\sum_{i=0}^{N^n_T-k_n+1}\sum_{p=1}^{k_n-1}(g^n_p)^3\sum_{t^n_{i+p-1}<s\leq t^n_{i+p}}\left(\Delta X(m)_s\right)^3-\sum_{0\leq s\leq T}(\Delta X(m)_s)^3\right\}\\
&=\Delta_n^{-1/4}\left\{\frac{1}{\psi_3k_n}\sum_{p=1}^{N^n_T}\left(\sum_{i=(p-k_n+1)_+}^{(p-1)\wedge(N^n_T-k_n+1)}(g^n_{p-i})^3\right)\sum_{t^n_{p-1}<s\leq t^n_{p}}\left(\Delta X(m)_s\right)^3-\sum_{0\leq s\leq T}(\Delta X(m)_s)^3\right\}.
\end{align*}
Now, since we have $\sum_{(t-h)_+<s\leq t}\left|\Delta X(m)_s\right|^3=O_p(h)$ as $h\downarrow0$ by \ref{hypo:SA2}, we can deduce that $\mathbb{A}^{(3)}_n(m)\to0$ as $n\to\infty$ for every $m$, so \eqref{aimII-4} holds true for $l=3$.
\end{proof}

\section*{References}
\bibliographystyle{model2-names}
%\bibliography{KL}

\end{document}